\tikzstyle{V}=[draw, fill =black, circle, inner sep=0pt, minimum size=1.5pt]
\tikzstyle{C}=[draw, fill =white, circle, inner sep=0pt, minimum size=1.5pt]
\tikzstyle{over}=[draw=white,double=black,line width=2pt, double distance=.5pt]
\numberwithin{equation}{section}
\theoremstyle{definition}
\newtheorem*{thmA}{Theorem A}
\newtheorem*{thmB}{Theorem B}
\newtheorem{theorem}{Theorem}[section]
\newtheorem{thm}[theorem]{Theorem}
\newtheorem{lemma}[theorem]{Lemma} 
\newtheorem{prop}[theorem]{Proposition} 
\newtheorem{proposition}[theorem]{Proposition}
\newtheorem{cor}[theorem]{Corollary}
\newtheorem{definition}[theorem]{Definition}
\newtheorem{remark}[theorem]{Remark}
\newtheorem{ex}[theorem]{Example}
\def\<{\langle}
\def\>{\rangle}
\newcommand{\BD}{{B}^\textup{D}}
\newcommand{\cB}{\mathcal{B}}
\newcommand{\cBD}{\mathcal{B}^\textup{D}}\newcommand{\CC}{\mathbb{C}}
\newcommand{\cN}{\mathcal{N}}
\newcommand{\cND}{\mathcal{N}^{\textup{D}}}
\newcommand{\CP}{\mathbb{P}_\mathbb{C}}
\newcommand{\cS}{\mathcal{S}}
\newcommand{\cSD}{\mathcal{S}^{\textup{D}}}
\newcommand{\da}{\dot{a}}
\newcommand{\db}{\dot{b}}
\newcommand{\dc}{\dot{c}}
\newcommand{\fg}{\mathfrak{g}}
\newcommand{\fgl}{\mathfrak{gl}} 
\newcommand{\tforall}{~\textup{ for all }~}
\newcommand{\fsl}{\mathfrak{sl}} 
\newcommand{\fso}{\mathfrak{so}} 
\newcommand{\fT}{\mathfrak{T}}
\newcommand{\GL}{\mathrm{GL}}
\newcommand{\inv}{^{-1}}
\newcommand{\II}{\textup{II}}
\newcommand{\III}{\textup{III}}
\newcommand{\ld}{\lambda}
\newcommand{\otw}{\textup{otherwise}}
\newcommand{\rk}{\mathop{\textup{rank}}}
\renewcommand{\SS}{\mathbb{S}}
\newcommand{\tA}{\widetilde{A}}
\newcommand{\tB}{\widetilde{B}}
\newcommand{\tcN}{\widetilde{\cN}}
\newcommand{\tcND}{\widetilde{\cN}^{\textup{D}}}
\newcommand{\tcS}{\widetilde{\cS}}
\newcommand{\tcSD}{\widetilde{\cS}^{\textup{D}}}
\newcommand{\td}{\widetilde{d}}
\newcommand{\tD}{\widetilde{D}}
\newcommand{\tDe}{\widetilde{\Delta}}
\newcommand{\tGa}{\widetilde{\Gamma}}
\newcommand{\tif}{\textup{if }}
\newcommand{\tphi}{\widetilde{\varphi}}
\newcommand{\TT}{\mathbb{T}}
\newcommand{\tv}{\widetilde{v}}
\newcommand{\tV}{\widetilde{V}}
\newcommand{\ZZ}{\mathbb{Z}}
\renewcommand{\Im}{\mathop{\text{Im}}}
\newcommand{\cupa}{\begin{tikzpicture}[baseline={(0,-.3)}, scale = 0.8]
\draw(-.25,0) -- (1.25,0) -- (1.25,-.9) -- (-.25,-.9) -- cycle;
\begin{footnotesize}
\node at (0,.2) {$1$};
\node at (.5,.2) {$2$};
\node at (1,.2) {$3$};
\end{footnotesize}
\draw[thick] (0,0) .. controls +(0,-.5) and +(0,-.5) .. +(.5,0);

\draw[thick] (1,0) -- +(0,-.9);
\end{tikzpicture}
}
\newcommand{\cupb}{\begin{tikzpicture}[baseline={(0,-.3)}, scale = 0.8]
\draw(-.25,0) -- (1.25,0) -- (1.25,-.9) -- (-.25,-.9) -- cycle;
\begin{footnotesize}
\node at (0,.2) {$1$};
\node at (.5,.2) {$2$};
\node at (1,.2) {$3$};
\end{footnotesize}
\draw[thick] (0.5,0) .. controls +(0,-.5) and +(0,-.5) .. +(.5,0);

\draw[thick] (0,0) -- +(0,-.9);
\end{tikzpicture}
}
\newcommand{\cupc}{\begin{tikzpicture}[baseline={(0,-.3)}, scale = 0.8]
\draw (-.25,0) -- (1.25,0) -- (1.25,-.9) -- (-.25,-.9) -- cycle;
\begin{footnotesize}
\node at (0,.2) {$1$};
\node at (.5,.2) {$2$};
\node at (1,.2) {$3$};
\end{footnotesize}
\draw[thick] (0,0) -- +(0,-.9);
\draw[thick] (0.5,0) -- +(0,-.9);
\draw[thick] (1,0) -- +(0,-.9);
\end{tikzpicture}
}
\newcommand{\cupaa}{\begin{tikzpicture}[baseline={(0,-.3)}, scale = 0.8]
\draw (-.25,0) -- (1.75,0) -- (1.75,-.9) -- (-.25,-.9) -- cycle;
\begin{footnotesize}
\node at (0,.2) {$1$};
\node at (.5,.2) {$2$};
\node at (1,.2) {$3$};
\node at (1.5,.2) {$4$};
\end{footnotesize}
\draw[thick] (0.5,0) .. controls +(0,-.5) and +(0,-.5) .. +(.5,0);
\draw[thick] (0,0) .. controls +(0,-1) and +(0,-1) .. +(1.5,0);
\end{tikzpicture}
}
\newcommand{\cupab}{\begin{tikzpicture}[baseline={(0,-.3)}, scale = 0.8]
\draw (-.25,0) -- (1.75,0) -- (1.75,-.9) -- (-.25,-.9) -- cycle;
\begin{footnotesize}
\node at (0,.2) {$1$};
\node at (.5,.2) {$2$};
\node at (1,.2) {$3$};
\node at (1.5,.2) {$4$};
\end{footnotesize}
\draw[thick] (0,0) .. controls +(0,-.5) and +(0,-.5) .. +(.5,0);
\draw[thick] (1,0) .. controls +(0,-.5) and +(0,-.5) .. +(.5,0);
\end{tikzpicture}
}
\newcommand{\cupfa}{\begin{tikzpicture}[baseline={(0,-.3)}, scale = 0.8]
\draw (-.25,0) -- (1.75,0) -- (1.75,-.9) -- (-.25,-.9) -- cycle;
\begin{footnotesize}
\node at (0,.2) {$1$};
\node at (.5,.2) {$2$};
\node at (1,.2) {$3$};
\node at (1.5,.2) {$4$};
\end{footnotesize}
\draw[thick] (0,0) .. controls +(0,-.5) and +(0,-.5) .. +(.5,0);
\draw (1,0) -- (1,-.9);
\draw (1.5,0) -- (1.5,-.9);
\end{tikzpicture}
}
\newcommand{\cupfb}{\begin{tikzpicture}[baseline={(0,-.3)}, scale = 0.8]
\draw (-.25,0) -- (1.75,0) -- (1.75,-.9) -- (-.25,-.9) -- cycle;
\begin{footnotesize}
\node at (0,.2) {$1$};
\node at (.5,.2) {$2$};
\node at (1,.2) {$3$};
\node at (1.5,.2) {$4$};
\end{footnotesize}
\draw[thick] (0.5,0) .. controls +(0,-.5) and +(0,-.5) .. +(.5,0);
\draw (0,0) -- (0,-.9);
\draw (1.5,0) -- (1.5,-.9);
\end{tikzpicture}
}
\newcommand{\cupfc}{\begin{tikzpicture}[baseline={(0,-.3)}, scale = 0.8]
\draw (-.25,0) -- (1.75,0) -- (1.75,-.9) -- (-.25,-.9) -- cycle;
\begin{footnotesize}
\node at (0,.2) {$1$};
\node at (.5,.2) {$2$};
\node at (1,.2) {$3$};
\node at (1.5,.2) {$4$};
\end{footnotesize}
\draw[thick] (1,0) .. controls +(0,-.5) and +(0,-.5) .. +(.5,0);
\draw (0,0) -- (0,-.9);
\draw (0.5,0) -- (0.5,-.9);
\end{tikzpicture}
}
\newcommand{\mcup}[2]{
\overset{
\hspace{.7mm}
{}_{#1}
\hspace{.9mm}
{}_{#2}}{
\cup\raisebox{-.5mm}{\hspace{-1.85mm}\scaleobj{.4}{\blacksquare}}
	} 
}
\newcommand{\mray}[1]{
\overset{{}_{#1}}{|\raisebox{.4mm}{\hspace{-1.2mm}\scaleobj{0.4}{\blacksquare}}} 
}
\newcommand{\mcupb}{
\begin{tikzpicture}[baseline={(0,-.3)}, scale = 0.8]
\draw (2.75,0) -- (1.25,0)  -- (1.25,-1.3) --  (2.75,-1.3);
\draw[dotted] (2.75,-1.3) -- (2.75,0);
\begin{footnotesize}
\node at (1.5,.2) {$1$};
\node at (2,.2) {$2$};
\node at (2.5,.2) {$3$};
\end{footnotesize}
\draw[thick] (1.5,0) -- (1.5, -1.3);
\draw[thick] (2,0) .. controls +(0,-.5) and +(0,-.5) .. +(.5,0);
\end{tikzpicture}
}
\newcommand{\fcupdm}{
\begin{tikzpicture}[baseline={(0,-.3)}, scale = 0.8]
\draw (-.25,0) -- (2.75,0) -- (2.75,-1.3) -- (-.25,-1.3) -- cycle;
\begin{footnotesize}
\node at (0,.2) {$0$};
\node at (.5,.2) {$1$};
\node at (1,.2) {$2$};
\node at (1.5,.2) {$3$};
\node at (2,.2) {$4$};
\node at (2.5,.2) {$5$};
\node at (1, -.65) {$\blacksquare$};
\node at (1.5, -.65) {$\blacksquare$};
\end{footnotesize}
\draw[thick] (0,0) .. controls +(0,-.5) and +(0,-.5) .. +(.5,0);
\draw[thick] (1,0) -- (1, -1.3);
\draw[thick] (1.5,0) -- (1.5, -1.3);
\draw[thick] (2,0) .. controls +(0,-.5) and +(0,-.5) .. +(.5,0);
\end{tikzpicture}
}
\newcommand{\mcupa}{
\begin{tikzpicture}[baseline={(0,-.3)}, scale = 0.8]
\draw (2.75,0) -- (1.25,0)  -- (1.25,-1.3) --  (2.75,-1.3);
\draw[dotted] (2.75,-1.3) -- (2.75,0);
\begin{footnotesize}
\node at (1.5,.2) {$1$};
\node at (2,.2) {$2$};
\node at (2.5,.2) {$3$};
\end{footnotesize}
\draw[thick] (2.5,0) -- (2.5, -1.3);
\draw[thick] (1.5,0) .. controls +(0,-.5) and +(0,-.5) .. +(.5,0);
\end{tikzpicture}
}
\newcommand{\fcupfp}{
\begin{tikzpicture}[baseline={(0,-.3)}, scale = 0.8]
\draw (-.25,0) -- (2.75,0) -- (2.75,-1.3) -- (-.25,-1.3) -- cycle;
\begin{footnotesize}
\node at (0,.2) {$1$};
\node at (.5,.2) {$2$};
\node at (1,.2) {$3$};
\node at (1.5,.2) {$4$};
\node at (2,.2) {$5$};
\node at (2.5,.2) {$6$};
\end{footnotesize}
\draw[thick] (0.5,0) .. controls +(0,-.5) and +(0,-.5) .. +(.5,0);
\draw[thick] (0,0) -- (0, -1.3);
\draw[thick] (2.5,0) -- (2.5, -1.3);
\draw[thick] (1.5,0) .. controls +(0,-.5) and +(0,-.5) .. +(.5,0);
\end{tikzpicture}
}
\newcommand{\fcupfm}{
\begin{tikzpicture}[baseline={(0,-.3)}, scale = 0.8]
\draw (-.25,0) -- (2.75,0) -- (2.75,-1.3) -- (-.25,-1.3) -- cycle;
\begin{footnotesize}
\node at (0,.2) {$1$};
\node at (.5,.2) {$2$};
\node at (1,.2) {$3$};
\node at (1.5,.2) {$4$};
\node at (2,.2) {$5$};
\node at (2.5,.2) {$6$};
\node at (0, -.65) {$\blacksquare$};
\node at (2.5, -.65) {$\blacksquare$};
\end{footnotesize}
\draw[thick] (0.5,0) .. controls +(0,-.5) and +(0,-.5) .. +(.5,0);
\draw[thick] (0,0) -- (0, -1.3);
\draw[thick] (2.5,0) -- (2.5, -1.3);
\draw[thick] (1.5,0) .. controls +(0,-.5) and +(0,-.5) .. +(.5,0);
\end{tikzpicture}
}
\newcommand{\mcupc}{
\begin{tikzpicture}[baseline={(0,-.3)}, scale = 0.8]
\draw (2.75,0) -- (1.25,0)  -- (1.25,-1.3) --  (2.75,-1.3);
\draw[dotted] (2.75,-1.3) -- (2.75,0);
\begin{footnotesize}
\node at (1.5,.2) {$1$};
\node at (1.5, -.65) {$\blacksquare$};
\node at (2,.2) {$2$};
\node at (2.25, -.35) {$\blacksquare$};
\node at (2.5,.2) {$3$};
\end{footnotesize}
\draw[thick] (1.5,0) -- (1.5, -1.3);
\draw[thick] (2,0) .. controls +(0,-.5) and +(0,-.5) .. +(.5,0);
\end{tikzpicture}
}
\newcommand{\fcupc}{
\begin{tikzpicture}[baseline={(0,-.3)}, scale = 0.8]
\draw (-.25,0) -- (2.75,0) -- (2.75,-1.3) -- (-.25,-1.3) -- cycle;
\begin{footnotesize}
\node at (0,.2) {$1$};
\node at (.5,.2) {$2$};
\node at (1,.2) {$3$};
\node at (1.5,.2) {$4$};
\node at (2,.2) {$5$};
\node at (2.5,.2) {$6$};
\end{footnotesize}
\draw[thick] (0,0) .. controls +(0,-1.5) and +(0,-1.5) .. +(2.5,0);
\draw[thick] (0.5,0) .. controls +(0,-1) and +(0,-1) .. +(1.5,0);
\draw[thick] (1,0) .. controls +(0,-.5) and +(0,-.5) .. +(.5,0);
\end{tikzpicture}
}
\newcommand{\mcupf}{
\begin{tikzpicture}[baseline={(0,-.3)}, scale = 0.8]
\draw (2.75,0) -- (1.25,0)  -- (1.25,-1.3) --  (2.75,-1.3);
\draw[dotted] (2.75,-1.3) -- (2.75,0);
\begin{footnotesize}
\node at (1.5,.2) {$1$};
\node at (1.5, -.65) {$\blacksquare$};
\node at (2,.2) {$2$};
\node at (2.5,.2) {$3$};
\end{footnotesize}
\draw[thick] (1.5,0) -- (1.5, -1.3);
\draw[thick] (2,0) .. controls +(0,-.5) and +(0,-.5) .. +(.5,0);
\end{tikzpicture}
}
\newcommand{\fcupd}{
\begin{tikzpicture}[baseline={(0,-.3)}, scale = 0.8]
\draw (-.25,0) -- (2.75,0) -- (2.75,-1.3) -- (-.25,-1.3) -- cycle;
\begin{footnotesize}
\node at (0,.2) {$1$};
\node at (.5,.2) {$2$};
\node at (1,.2) {$3$};
\node at (1.5,.2) {$4$};
\node at (2,.2) {$5$};
\node at (2.5,.2) {$6$};
\end{footnotesize}
\draw[thick] (0,0) .. controls +(0,-.5) and +(0,-.5) .. +(.5,0);
\draw[thick] (1,0) .. controls +(0,-.5) and +(0,-.5) .. +(.5,0);
\draw[thick] (2,0) .. controls +(0,-.5) and +(0,-.5) .. +(.5,0);
\end{tikzpicture}
}
\newcommand{\fcupdp}{
\begin{tikzpicture}[baseline={(0,-.3)}, scale = 0.8]
\draw (-.25,0) -- (2.75,0) -- (2.75,-1.3) -- (-.25,-1.3) -- cycle;
\begin{footnotesize}
\node at (0,.2) {$1$};
\node at (.5,.2) {$2$};
\node at (1,.2) {$3$};
\node at (1.5,.2) {$4$};
\node at (2,.2) {$5$};
\node at (2.5,.2) {$6$};
\end{footnotesize}
\draw[thick] (0,0) .. controls +(0,-.5) and +(0,-.5) .. +(.5,0);
\draw[thick] (1,0) -- (1, -1.3);
\draw[thick] (1.5,0) -- (1.5, -1.3);
\draw[thick] (2,0) .. controls +(0,-.5) and +(0,-.5) .. +(.5,0);
\end{tikzpicture}
}
\newcommand{\mcupe}{
\begin{tikzpicture}[baseline={(0,-.3)}, scale = 0.8]
\draw (2.75,0) -- (1.25,0)  -- (1.25,-1.3) --  (2.75,-1.3);
\draw[dotted] (2.75,-1.3) -- (2.75,0);
\begin{footnotesize}
\node at (1.5,.2) {$1$};
\node at (2,.2) {$2$};
\node at (2.25, -.35) {$\blacksquare$};
\node at (2.5,.2) {$3$};
\end{footnotesize}
\draw[thick] (1.5,0) -- (1.5, -1.3);
\draw[thick] (2,0) .. controls +(0,-.5) and +(0,-.5) .. +(.5,0);
\end{tikzpicture}
}
\newcommand{\fcupcm}{
\begin{tikzpicture}[baseline={(0,-.3)}, scale = 0.8]
\draw (-.25,0) -- (2.75,0) -- (2.75,-1.3) -- (-.25,-1.3) -- cycle;
\begin{footnotesize}
\node at (0,.2) {$1$};
\node at (.5,.2) {$2$};
\node at (1,.2) {$3$};
\node at (1.5,.2) {$4$};
\node at (2,.2) {$5$};
\node at (2.5,.2) {$6$};
\node at (.75, -.35) {$\blacksquare$};
\node at (1.75, -.35) {$\blacksquare$};
\end{footnotesize}
\draw[thick] (.5,0) .. controls +(0,-.5) and +(0,-.5) .. +(.5,0);
\draw[thick] (1.5,0) .. controls +(0,-.5) and +(0,-.5) .. +(.5,0);
\draw[thick] (0,0) .. controls +(0,-1.5) and +(0,-1.5) .. +(2.5,0);
\end{tikzpicture}
}
\newcommand{\fcupcmp}{
\begin{tikzpicture}[baseline={(0,-.3)}, scale = 0.8]
\draw (-.25,0) -- (2.75,0) -- (2.75,-1.3) -- (-.25,-1.3) -- cycle;
\begin{footnotesize}
\node at (0,.2) {$1$};
\node at (.5,.2) {$2$};
\node at (1,.2) {$3$};
\node at (1.5,.2) {$4$};
\node at (2,.2) {$5$};
\node at (2.5,.2) {$6$};
\node at (.75, -.35) {$\blacksquare$};
\node at (1.75, -.35) {$\blacksquare$};
\end{footnotesize}
\draw[thick] (.5,0) .. controls +(0,-.5) and +(0,-.5) .. +(.5,0);
\draw[thick] (1.5,0) .. controls +(0,-.5) and +(0,-.5) .. +(.5,0);
\draw[thick] (0,0) -- (0, -1.3);
\draw[thick] (2.5,0) -- (2.5, -1.3);
\end{tikzpicture}
}
\newcommand{\fcupcmm}{
\begin{tikzpicture}[baseline={(0,-.3)}, scale = 0.8]
\draw (-.25,0) -- (2.75,0) -- (2.75,-1.3) -- (-.25,-1.3) -- cycle;
\begin{footnotesize}
\node at (0,.2) {$1$};
\node at (.5,.2) {$2$};
\node at (1,.2) {$3$};
\node at (1.5,.2) {$4$};
\node at (2,.2) {$5$};
\node at (2.5,.2) {$6$};
\node at (.75, -.35) {$\blacksquare$};
\node at (1.75, -.35) {$\blacksquare$};
\node at (0, -.65) {$\blacksquare$};
\node at (2.5, -.65) {$\blacksquare$};
\end{footnotesize}
\draw[thick] (.5,0) .. controls +(0,-.5) and +(0,-.5) .. +(.5,0);
\draw[thick] (1.5,0) .. controls +(0,-.5) and +(0,-.5) .. +(.5,0);
\draw[thick] (0,0) -- (0, -1.3);
\draw[thick] (2.5,0) -- (2.5, -1.3);
\end{tikzpicture}
}
\newcommand{\mcupd}{
\begin{tikzpicture}[baseline={(0,-.3)}, scale = 0.8]
\draw (2.75,0) -- (1.25,0)  -- (1.25,-1.3) --  (2.75,-1.3);
\draw[dotted] (2.75,-1.3) -- (2.75,0);
\begin{footnotesize}
\node at (1.5,.2) {$1$};
\node at (2.5, -.65) {$\blacksquare$};
\node at (2,.2) {$2$};
\node at (2.5,.2) {$3$};
\end{footnotesize}
\draw[thick] (2.5,0) -- (2.5, -1.3);
\draw[thick] (1.5,0) .. controls +(0,-.5) and +(0,-.5) .. +(.5,0);
\end{tikzpicture}
}
\newcommand{\fcupf}{
\begin{tikzpicture}[baseline={(0,-.3)}, scale = 0.8]
\draw (-.25,0) -- (2.75,0) -- (2.75,-1.3) -- (-.25,-1.3) -- cycle;
\begin{footnotesize}
\node at (0,.2) {$1$};
\node at (.5,.2) {$2$};
\node at (1,.2) {$3$};
\node at (1.5,.2) {$4$};
\node at (2,.2) {$5$};
\node at (2.5,.2) {$6$};
\end{footnotesize}
\draw[thick] (.5,0) .. controls +(0,-.5) and +(0,-.5) .. +(.5,0);
\draw[thick] (1.5,0) .. controls +(0,-.5) and +(0,-.5) .. +(.5,0);
\draw[thick] (0,0) .. controls +(0,-1.5) and +(0,-1.5) .. +(2.5,0);
\end{tikzpicture}
}
\newcommand{\mcupaa}{
\begin{tikzpicture}[baseline={(0,-.3)}, scale = 0.8]
\draw (1.75,0) -- (0.75,0) -- (0.75,-.9) -- (1.75,-.9);
\draw[dotted] (1.75,-.9) -- (1.75,0);\begin{footnotesize}
\node at (1.5,.2) {$2$};
\node at (1,.2) {$1$};
\node at (1.25, -.35) {$\blacksquare$};
\end{footnotesize}
\draw[thick] (1,0) .. controls +(0,-.5) and +(0,-.5) .. +(.5,0);
\end{tikzpicture}
}
\newcommand{\mcupab}{
\begin{tikzpicture}[baseline={(0,-.3)}, scale = 0.8]
\draw (1.75,0) -- (0.75,0) -- (0.75,-.9) -- (1.75,-.9);
\draw[dotted] (1.75,-.9) -- (1.75,0);\begin{footnotesize}
\node at (1.5,.2) {$2$};
\node at (1,.2) {$1$};
\end{footnotesize}
\draw[thick] (1,0) .. controls +(0,-.5) and +(0,-.5) .. +(.5,0);
\end{tikzpicture}
}
\newcommand{\cupaaa}{
\begin{tikzpicture}[baseline={(0,-.3)}, scale = 0.8]
\draw (1.75,0) -- (0.75,0) -- (0.75,-.6) -- (1.75,-.6);
\draw (1.75,-.6) -- (1.75,0);\begin{footnotesize}
\node at (1.5,.2) {$2$};
\node at (1,.2) {$1$};
\end{footnotesize}
\draw[thick] (1,0) .. controls +(0,-.5) and +(0,-.5) .. +(.5,0);
\end{tikzpicture}
}
\newcommand{\mcupaaa}{
\begin{tikzpicture}[baseline={(0,-.3)}, scale = 0.8]
\draw (1.25,0) -- (0.75,0) -- (0.75,-.6) -- (1.25,-.6);
\draw[dotted] (1.25,-.6) -- (1.25,0);\begin{footnotesize}
\node at (1,.2) {$1$};
\end{footnotesize}
\draw[thick] (1,0) -- (1, -.6);
\end{tikzpicture}
}
\newcommand{\mcupaab}{
\begin{tikzpicture}[baseline={(0,-.3)}, scale = 0.8]
\draw (1.25,0) -- (0.75,0) -- (0.75,-.6) -- (1.25,-.6);
\draw[dotted] (1.25,-.6) -- (1.25,0);\begin{footnotesize}
\node at (1,.2) {$1$};
\node at (1, -.3) {$\blacksquare$};
\end{footnotesize}
\draw[thick] (1,0) -- (1, -.6);
\end{tikzpicture}
}
\newcommand{\icupa}{
\begin{tikzpicture}[baseline={(0,-.3)}, scale = 0.8]
\draw (2.25,0) -- (0.75,0) -- (0.75,-.6) -- (2.25,-.6);
\draw[dotted] (2.25,-.6) -- (2.25,0);\begin{footnotesize}
\node at (1,.2) {$1$};
\node at (1.75, -.3) {$a_2$};
\end{footnotesize}
\draw[thick] (1,0) -- (1, -.6);
\end{tikzpicture}
}
\newcommand{\icupb}{
\begin{tikzpicture}[baseline={(0,-.3)}, scale = 0.8]
\draw (2.25,0) -- (0.75,0) -- (0.75,-.6) -- (2.25,-.6);
\draw[dotted] (2.25,-.6) -- (2.25,0);\begin{footnotesize}
\node at (1, -.3) {$\blacksquare$};
\node at (1,.2) {$1$};
\node at (1.75, -.3) {$a_2$};
\end{footnotesize}
\draw[thick] (1,0) -- (1, -.6);
\end{tikzpicture}
}
\newcommand{\icupc}{
\begin{tikzpicture}[baseline={(0,-.3)}, scale = 0.8]
\draw (3.25,0) -- (0.75,0) -- (0.75,-.9) -- (3.25,-.9);
\draw[dotted] (3.25,-.9) -- (3.25,0);\begin{footnotesize}
\node at (2.5,.2) {$2t$};
\node at (1,.2) {$1$};
\node at (1.75, -.7) {$\blacksquare$};
\node at (1.75, -.3) {$a_1$};
\node at (2.8, -.3) {$a_2$};
\end{footnotesize}
\draw[thick] (1,0) .. controls +(0,-1) and +(0,-1) .. +(1.5,0);
\end{tikzpicture}
}
\newcommand{\icupd}{
\begin{tikzpicture}[baseline={(0,-.3)}, scale = 0.8]
\draw (3.25,0) -- (0.75,0) -- (0.75,-.9) -- (3.25,-.9);
\draw[dotted] (3.25,-.9) -- (3.25,0);\begin{footnotesize}
\node at (2.5,.2) {$2t$};
\node at (1,.2) {$1$};
\node at (1.75, -.3) {$a_1$};
\node at (2.8, -.3) {$a_2$};
\end{footnotesize}
\draw[thick] (1,0) .. controls +(0,-1) and +(0,-1) .. +(1.5,0);
\end{tikzpicture}
}
\title{Irreducible components of two-row Springer fibers and Nakajima quiver varieties}
\author[M.S. Im, C. Lai, and A. Wilbert]{Mee Seong Im,  Chun-Ju Lai, and Arik Wilbert}
\address{Department of Mathematical Sciences, United States Military Academy, West Point, NY 10996}
    \email{meeseongim@gmail.com (Im)}
\address{Department of Mathematics, University of Georgia, Athens, GA 30602}
    \email{cjlai@uga.edu (Lai)}
\address{Department of Mathematics, University of Georgia, Athens, GA 30602}
\email{arik.wilbert@uga.edu (Wilbert)}
\begin{document}

\begin{abstract}
We give an explicit description of the irreducible components of two-row Springer fibers in type A as closed subvarieties in certain Nakajima quiver varieties in terms of quiver representations. 
By taking invariants under a variety automorphism, we obtain an explicit algebraic description of the irreducible components of two-row Springer fibers of classical type.
As a consequence, we discover relations on isotropic flags that describe the irreducible components.

\end{abstract}
 
\maketitle 

\section{Introduction}
\subsection{Background and summary} \label{sec:background}
Quiver varieties were used by 
Nakajima in~\cite{Na94, Nak98} to provide a geometric construction of the universal enveloping algebra for symmetrizable Kac-Moody Lie algebras altogether with their integrable highest weight modules.  
It was shown by Nakajima that the cotangent bundle of partial flag varieties can be realized as a quiver variety.
He also conjectured that the Slodowy varieties, i.e., resolutions of slices to the adjoint orbits in the nilpotent cone, can be realized as quiver varieties. 
This conjecture was proved by Maffei,~\cite[Theorem~8]{Maf05}, 
thereby establishing a precise connection between quiver varieties and flag varieties. 

Aside from quiver varieties, an important geometric object in our article is the Springer fiber, which plays a crucial role in the geometric construction of representations of Weyl groups (cf. \cite{Spr76, Spr78}). 
In general, Springer fibers are singular and decompose into many irreducible components.
  
The first goal of this article is to study the irreducible components of two-row Springer fibers in type A from the point of view of quiver varieties using Maffei's isomorphism. More precisely, since Springer fibers are naturally contained in the Slodowy variety, it makes sense to describe the image of the Springer fiber as a subvariety of the quiver variety. 
We  describe  quiver representations which represent points of the entire Slodowy variety containing any given two-row Springer fiber  under Maffei's isomorphism.
As an application, we achieve our goal (see Theorem~\ref{thm:main1}). Our proof relies on an explicit description of the irreducible components in terms of flags obtained by Stroppel--Webster, \cite{SW12}, based on an earlier work by Fung, \cite{Fun03}. 

Moreover, Maffei's isomorphism (which in general is give by an implicit solution of a system of equations in terms of matrices) actually becomes explicit (cf. Lemma~\ref{lem:hell}) on the Slodowy variety, which allows for a translation of the results of Stroppel--Webster to the quiver variety, see Proposition~\ref{prop:hellA}. 
It would be interesting to investigate if the relations on the quiver variety of the irreducible components can be generalized to nilpotent endomorphisms with more than two Jordan blocks. For these nilpotent endomorphisms, obtaining an understanding of the precise geometric and combinatorial structure of the irreducible components remains an open problem.

The second goal\footnote{We would like to thank Dongkwan Kim for pointing out that the second goal of this paper can be obtained more efficiently without using the fixed-point subvarieties. This manuscript (not intended for publication in a journal) has therefore been split  into the two preprints arXiv:2009.08778 and arXiv:2011.13138.} of this article is to generalize the results in type A to all classical types. Our focus will be on two-row Springer fibers of type D. In fact, any two-row Springer fiber of type C is isomorphic to a two-row Springer fiber of type D,~\cite{Wil18, Li19}. Moreover, by the classification of nilpotent orbits, \cite{Wi37, Ger61}, there are no two-row Springer fibers of type B. Hence, it is enough to treat the type D case. Work of Henderson--Licata~\cite{HL14} and Li~\cite{Li19} shows that the type D Slodowy variety can be realized as a fixed-point subvariety of a type A quiver variety under a suitable variety automorphism. These fixed-point subvarieties play an important role in developing the geometric representation theory of symmetric pairs, \cite{Li19}. 

Our second main result (see Theorems~\ref{thm:mainDQ} and \ref{thm:main3}) is to explicitly compute the fixed points contained in the subvarieties of the quiver variety corresponding to the irreducible components in type A. By sending the result through Maffei's isomorphism we obtain explicit algebraic relations that describe the irreducible components of the type D two-row Springer fiber in terms of isotropic flags. 
Finally, we show that these subvarieties are indeed irreducible components using an iterated $\CP^1$-bundle argument.
This generalizes the results by Stroppel--Webster to all other types. 
Before our manuscript, for two-row Springer fibers of type D, 
the only explicit algebro-geometric construction of components required an intricate inductive procedure, see \cite[\S 6]{ES16}, based on \cite{Spa82, vL89}.
Other than that, only topological models were available, \cite{ES16, Wil18}.

There have been an extensive study of Lagrangian subvarieties in quiver varieties (see e.g., \cite{Lu91, Lu98, Na94, Nak98, Sai02, KS19}). That is, such subvarieties index crystal bases of certain irreducible representations of simple Lie algebras using a conormal bundle approach or torus fixed-point approach. 
In particular, they can also be identified with Springer fibers via Maffei's isomorphism. Our results however make preceding constructions explicit. 


\subsection{Type A: Two-row Springer fibers and quiver varieties}
Let $\mu: \tcN \to \cN$ be the Springer resolution for nilpotent cone $\cN$ of $\fgl_n(\CC)$. For any element $x \in \cN$, one can associate a Slodowy slice $\cS_x$, a Slodowy variety $\tcS_x$, and a Springer fiber $\cB_x$ with the relations below:
\[
\xymatrix@-1.25pc{
\mathcal{B}_x = \mu^{-1}(x) \ar[dd] \ar@{^{(}->}[rr]
& & \widetilde{\mathcal{S}}_x =\mu^{-1}(\mathcal{S}_x)  \ar[dd]^{\mu|_{\widetilde{\mathcal{S}}_x}} \ar@{^{(}->}[rr] & &\widetilde{\mathcal{N}} \ar[dd]^{\mu.}
\\
& & & & \\ 
\{ x\}\ar@{^{(}->}[rr] &  & \mathcal{S}_x \ar@{^{(}->}[rr]&  &\mathcal{N} 
}
\] 
It was conjectured by Nakajima in \cite{Na94} and then proved by Maffei \cite[Theorem~8]{Maf05} that there is an isomorphism $\tphi: M(d,v) \to \tcS_x$ between the Slodowy variety and a certain Nakajima quiver variety $M(d,v)$, which is realized by a geometric invariant theory (GIT) quotient of a certain space of (type A) quiver representations $\Lambda^+(d,v)$, which consists of collections $((A_i)_i, (B_i)_i, (\Gamma_i)_i, (\Delta_i)_i)$ of linear maps. In particular, it can be realized via certain stability and admissibility conditions. 

For a nilpotent endomorphism $x$ of Jordan type $(n-k,k)$, it is well-known (cf. \cite[\S~7]{Fun03}) that the irreducible components $\{K^a\}_a$ of  the Springer fiber $\cB_x$ are parametrized by the so-called cup diagrams.
The purpose of this article is to give an 
explicit description (of the irreducible components) of the Springer fibers via the embedding into the corresponding Nakajima quiver varieties. 
Their relations are depicted as below:
\[
\begin{tikzcd}
\Lambda^+(d,v) \ar[r,"p"]& M(d,v) \ar[r,"\tphi", "\simeq"'] & \tcS_x
\\
& \tphi\inv(\cB_x) \ar[r, "\simeq"'] \ar[u, hookrightarrow]& \cB_x. \ar[u, hookrightarrow]
\\
& \tphi\inv(K^a) \ar[r, "\simeq"'] \ar[u, hookrightarrow]& K^a \ar[u, hookrightarrow]
&
\end{tikzcd}
\]
Each irreducible component $K^a$ consists of pairs $(x, F_\bullet)$ for some complete flag $F_\bullet = (0 \subsetneq F_1 \subsetneq \ldots \subsetneq F_n = \CC^n)$ subject to certain relations imposed from the configurations of cups and rays in a cup diagram.

Our first result is to give explicit and new relations on the quiver representation side that correspond to the cup/ray relations in \cite[Proposition~7]{SW12} as below:
\[
\begin{split}
\textup{Cup relation for } \overset{{}_i\hspace{.9mm}{}_j}{\cup} 
&
\quad
\Leftrightarrow 
\quad
F_j = 
x^{-\frac{1}{2}(j-i+1)} F_{i-1}
\\
&
\quad
\Leftrightarrow 
\quad
\ker B_{i-1} B_{i} \cdots B_{\frac{j+i-3}{2}}=\ker A_{j-1} A_{j-2} \cdots A_{\frac{j+i-1}{2}},
\\
\textup{Ray relation for } \overset{{}_i}{|} 
&
\quad
\Leftrightarrow 
\quad
F_i = x^{-\frac{1}{2}(i-\rho(i))}(x^{n-k-\rho(i)} F_n ) 
\\
&
\quad
\Leftrightarrow 
\quad
\begin{cases}
\:\: B_iA_i = 0
&\tif c(i) \geq 1, 
\\
B_{i} B_{i+1} \cdots B_{n-k-1}\Gamma_{n-k} = 0
&\tif c(i) = 0,
\end{cases}
\end{split}
\]
where $\rho(i) \in \ZZ_{>0}$ counts the number of rays (including itself) to the left of $i$ , and $c(i) = \frac{i-\rho(i)}{2}$ is the total number of cups to the left of $i$. 
In other words, we have constructed a subvariety $M^a \subset M(d,v)$ 
inside the quiver variety using the above relations, whose points correspond to exactly one irreducible component of the Springer fiber, and we prove: 
\begin{thmA}[Theorem~\ref{thm:main1}]
For any cup diagram $a$, the Maffei--Nakajima isomorphism $\tphi:M(d,v)\to \tcS_x$ between quiver variety and Slodowy variety restricts to an isomorphism
$M^a \to K^a$ between irreducible components.

\end{thmA}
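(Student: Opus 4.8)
The plan is to leverage the fact that $\tphi$ is \emph{already} known to be an isomorphism of varieties $M(d,v)\to\tcS_x$ by Maffei \cite[Theorem~8]{Maf05}, so that the entire content of the statement reduces to the set-theoretic identity $\tphi(M^a)=K^a$ of closed subvarieties. Indeed, the restriction of a variety isomorphism to a closed subvariety is an isomorphism onto its (reduced) image, and that image is again closed; hence once $\tphi(M^a)=K^a$ is established, $\tphi|_{M^a}$ is automatically an isomorphism of reduced schemes. Since $K^a$ is an irreducible component of $\cB_x$ by Fung \cite{Fun03} and Stroppel--Webster \cite{SW12}, it then follows that $M^a=\tphi\inv(K^a)$ is an irreducible component of $\tphi\inv(\cB_x)$, giving precisely the asserted isomorphism between irreducible components.

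First I would make Maffei's a priori implicit isomorphism completely explicit along the Slodowy variety; this is the role of Lemma~\ref{lem:hell}. Given quiver data $((A_i)_i,(B_i)_i,(\Gamma_i)_i,(\Delta_i)_i)$ representing a point of $M(d,v)$, one writes down in closed form the flag $F_\bullet=(0\subsetneq F_1\subsetneq\cdots\subsetneq F_n=\CC^n)$ and the nilpotent $x$ with $\tphi((A_i,B_i,\Gamma_i,\Delta_i))=(x,F_\bullet)$. The structural point is that along the flag the maps $B_i$ implement the step "up" and the maps $A_i$ the step "down," so that the compositions $B_{i-1}B_i\cdots$ and $A_{j-1}A_{j-2}\cdots$ compute half-powers of $x$ applied to the spaces $F_{i-1}$ and $F_j$, while the framing maps $\Gamma$, $\Delta$ govern the interaction with the top space $F_n=\CC^n$.

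With this explicit form in hand, the heart of the argument is Proposition~\ref{prop:hellA}, which translates each defining relation of $M^a$ into the corresponding cup/ray relation of $K^a$ from \cite[Proposition~7]{SW12}. Concretely, for a point with image $(x,F_\bullet)$ I would verify the two biconditionals displayed in the introduction: the cup relation for $\overset{{}_i\,{}_j}{\cup}$, namely $\ker B_{i-1}B_i\cdots B_{\frac{j+i-3}{2}}=\ker A_{j-1}A_{j-2}\cdots A_{\frac{j+i-1}{2}}$, holds if and only if $F_j=x^{-\frac{1}{2}(j-i+1)}F_{i-1}$; and the ray relation for $\overset{{}_i}{|}$, in its two cases $c(i)\ge 1$ and $c(i)=0$, holds if and only if $F_i=x^{-\frac{1}{2}(i-\rho(i))}(x^{n-k-\rho(i)}F_n)$. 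Since $M^a$ is cut out by exactly the quiver relations indexed by the cups and rays of the diagram $a$, while $K^a$ is cut out by exactly the corresponding flag relations, these equivalences show that a point of $M(d,v)$ lies in $M^a$ if and only if its $\tphi$-image lies in $K^a$; both inclusions of $\tphi(M^a)=K^a$ follow at once.

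The main obstacle is Proposition~\ref{prop:hellA} itself, i.e.\ extracting the flag from the implicit matrix solution defining Maffei's map and then verifying the kernel and image identities. The cup case demands tracking how half-powers of $x$ propagate through the alternating compositions of the $A_i$ and $B_i$ along a possibly nested cup, and the ray case requires a careful accounting of the framing map $\Gamma_{n-k}$ and of the boundary term $x^{n-k-\rho(i)}F_n$, which is exactly where the split into $c(i)\ge 1$ and $c(i)=0$ arises. The delicate issue is not any single local equivalence but ensuring that the local cup and ray identities remain mutually consistent across the whole diagram $a$, with nested cups interleaved with rays, so that the globally defined $M^a$ coincides with $K^a$ on the nose rather than only up to a further intersection of conditions.
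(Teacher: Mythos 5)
Your proposal follows essentially the same route as the paper: the theorem is reduced to the set-theoretic identity $p_{n-k,k}(\Lambda^a) = \tphi\inv(K^a)$, which the paper proves by making Maffei's isomorphism explicit along the Slodowy variety (Lemma~\ref{lem:hell}) and translating the quiver-side kernel relations into the Fung/Stroppel--Webster flag relations (Propositions~\ref{prop:a1} and~\ref{prop:hellA}), exactly the two ingredients and the two biconditionals you single out. The only structural difference is bookkeeping: the paper factors the dictionary through the modified quiver space $\fT^a$ and uses Corollary~\ref{cor:Spr} (vanishing of $\Delta$) to pin the image inside the Springer fiber over the fixed $x$, whereas you compress this into a single translation step, but the substance is identical.
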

\subsection{Springer fibers of classical type}
For any type $\Phi = {}$B, C or D, let $\mu_\Phi: \tcN^{\Phi} \to \cN^\Phi$ be the Springer resolution for the nilpotent cone $\cN^\Phi$ of the Lie algebra $\fg^\Phi$ of type $\Phi$. 
For each $x \in \cN^\Phi \subset \cN$, one associates a Slodowy slice $\cS^\Phi_x$, a Slodowy variety $\tcS^\Phi_x$, and a Springer fiber $\cB^\Phi_x$, which are related as follows:
\[ 
\xymatrix@-1.25pc{
\cB^\Phi_x = \mu_\Phi\inv(x) \ar@{^{(}->}[rr]  \ar[dd] & &  
\tcS^\Phi_x = \mu_\Phi\inv(\cS^\Phi_x)  \ar@{^{(}->}[rr] \ar[dd] & &  
\tcN^\Phi  \ar[dd]^{\mu_{\Phi}.}\\ 
& &    & &  \\ 
\{ x\} \ar@{^{(}->}[rr] & & \cS^\Phi_x \ar@{^{(}->}[rr] & & 
\cN^\Phi \\ 
}
\] 
In the following, we would like to study the components of Springer fibers $\cB^\Phi_{x}$ associated with a nilpotent endomorphism $x \in \cN^\Phi$ of Jordan type $(n-k,k)$, using a generalized cup diagram approach.

As mentioned in Section~\ref{sec:background}, it suffices to study the type D Springer fibers of two Jordan blocks.
The type D cup diagrams are similar to the type A ones but with two new ingredients -- marked cups and marked rays, which arise naturally in the process of folding a centro-symmetric type A cup diagram (see \cite{ES15, LS13}).
For example,
\[
\begin{tikzpicture}[baseline={(0,-.3)}]
\draw (-.25,0) -- (5.75,0) -- (5.75,-1.3) -- (-.25,-1.3) -- cycle;
\draw[dotted] (2.75,-1.5) -- (2.75,.5);
\draw[>=stealth, <->, double] (0,-1.2) .. controls +(.25,-.5) and +(-.25,-.5) .. (5.5,-1.2);
\begin{footnotesize}
\node at (0,.2) {$1$};
\node at (.5,.2) {$2$};
\node at (1,.2) {$3$};
\node at (1.5,.2) {$4$};
\node at (2,.2) {$5$};
\node at (2.5,.2) {$6$};
\node at (3,.2) {$7$};
\node at (3.5,.2) {$8$};
\node at (4,.2) {$9$};
\node at (4.5,.2) {$10$};
\node at (5,.2) {$11$};
\node at (5.5,.2) {$12$};
\end{footnotesize}
\draw[thick] (1,0) -- (1, -1.3);
\draw[thick] (4.5,0) -- (4.5, -1.3);
\draw[thick] (0,0) .. controls +(0,-.5) and +(0,-.5) .. +(.5,0);
\draw[thick] (1.5,0) .. controls +(0,-1.5) and +(0,-1.5) .. +(2.5,0);
\draw[thick] (2,0) .. controls +(0,-1) and +(0,-1) .. +(1.5,0);
\draw[thick] (2.5,0) .. controls +(0,-.5) and +(0,-.5) .. +(.5,0);
\draw[thick] (5,0) .. controls +(0,-.5) and +(0,-.5) .. +(.5,0);
\end{tikzpicture}
\quad
\Rightarrow
\quad
\begin{tikzpicture}[baseline={(0,-.3)}]
\draw (5.75,0) -- (2.75,0) -- (2.75,-1.3) -- (5.75,-1.3);
\draw[dotted] (5.75,-1.5) -- (5.75,.5);
\begin{footnotesize}
\node at (3,.2) {$1$};
\node at (5.25, -.35) {$\blacksquare$};
\node at (3.5,.2) {$2$};
\node at (4,.2) {$3$};
\node at (4.5, -.65) {$\blacksquare$};
\node at (4.5,.2) {$4$};
\node at (5,.2) {$5$};
\node at (5.5,.2) {$6$};
\end{footnotesize}
\draw[thick] (4.5,0) -- (4.5, -1.3);
\draw[thick] (4,0) -- (4, -1.3);
\draw[thick] (3,0) .. controls +(0,-.5) and +(0,-.5) .. +(.5,0);
\draw[thick] (5,0) .. controls +(0,-.5) and +(0,-.5) .. +(.5,0);
\end{tikzpicture}
\]
Here the cups and rays may carry a mark (i.e., $\blacksquare$) subject to some conditions.
Unlike in type A, no explicit relations describing the flags in a given component were known for marked diagrams.
In this paper, we discover explicit relations on the flags described by the cups and rays for a marked diagram. 
In order to achieve this, we study the process of taking fixed-points under a certain automorphism on the quiver varieties, which naturally corresponds to a folding of a type A cup diagram.

\subsection{Involutive quiver varieties}
For a Nakajima quiver variety $M(d,v)$ of type A, an automorphism $\theta$ on the type A Dynkin diagram induces a 
variety automorphism $\Theta$ which also depends on some non-degenerate bilinear form.
These fixed-point subvarieties $M(d,v)^\Theta$ appeared in \cite{HL14}, and were generalized in \cite{Li19} (see \cite[Section 4]{Li19} for a more general construction of such subvarieties).
It is shown in \cite{HL14} that under the Maffei-Nakajima isomorphism, $M(d,v)^\Theta$ encodes the type D Slodowy varieties.

We can now apply this folding technique to our subvariety $M^a  \subset  M(d,v)$ and then obtain a fixed-point subvariety $(M^a)^\Theta \subset M(d,v)^\Theta$ associated with a type D cup diagram.

\begin{thmB}[Theorem~\ref{thm:mainDQ}]
For any type A cup diagram $a$ that is centro-symmetric, the Maffei-Nakajima isomorphism $\tphi$ 
restricts to an isomorphism $ (M^a)^\Theta \to \sqcup_{\da} K^{\da}$,
where $\da$ runs over all type D cup diagrams which unfold to $a$ in the sense of Definition~\ref{def:unfold}.
Moreover, 
\[
M(d,v)^\Theta \simeq \bigcup_{\da} K^{\da}.
\]
\end{thmB}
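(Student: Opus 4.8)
The plan is to derive Theorem~\ref{thm:mainDQ} from Theorem~\ref{thm:main1} by passing to $\Theta$-fixed points, the crucial input being that $\tphi$ intertwines the quiver-variety automorphism $\Theta$ with a geometric involution on the Slodowy side. First I would isolate this equivariance. Because $\Theta$ is built from the fixed non-degenerate bilinear form, and because of the realization of the type D Slodowy variety in \cite{HL14, Li19}, there is an involution $\sigma$ of $\tcS_x$ with $\tphi\circ\Theta=\sigma\circ\tphi$, under which $\tphi$ restricts to an isomorphism $M(d,v)^\Theta\xrightarrow{\sim}(\tcS_x)^\sigma\simeq\tcSD_x$. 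On flags, $\sigma$ carries $F_\bullet$ to the flag of orthogonal complements for the form, so that its fixed locus inside $\cB_x$ is precisely the isotropic locus, i.e.\ the type D Springer fiber $\cB^{\textup{D}}_x=(\cB_x)^\sigma$.

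Granting the equivariance, the first assertion becomes nearly formal. Since $\tphi$ already restricts to the isomorphism $M^a\to K^a$ of Theorem~\ref{thm:main1} and commutes with $\Theta$ and $\sigma$, it carries $(M^a)^\Theta=M^a\cap M(d,v)^\Theta$ isomorphically onto the fixed locus $(K^a)^\sigma=K^a\cap\tcSD_x$. What remains is to identify this fixed locus as $\sqcup_{\da}K^{\da}$. I would carry this out on the quiver side, where the defining cup/ray relations for $M^a$ and the bilinear-form constraints cutting out $M(d,v)^\Theta$ are both explicit in terms of the maps $(A_i),(B_i),(\Gamma_i),(\Delta_i)$. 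Solving the resulting fixed-point equations for a centro-symmetric $a$ forces a finite list of discrete choices -- the mark data attached to the self-paired cups and to the rays meeting the symmetry axis -- and each admissible choice carves out a closed-and-open piece of $(M^a)^\Theta$.

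The matching with Definition~\ref{def:unfold} then proceeds by checking that these marking choices are exactly the data distinguishing the type D cup diagrams $\da$ that unfold to $a$, and that the piece attached to $\da$ coincides with $\tphi\inv(K^{\da})$; that each such piece really is an irreducible component of $\cB^{\textup{D}}_x$ I would confirm by the iterated $\CP^1$-bundle argument of Theorem~\ref{thm:main3}. For the \emph{Moreover} clause I would union over all centro-symmetric diagrams: since $\cB_x=\bigcup_a K^a$ ranges over every type A cup diagram and a non-centro-symmetric $a$ is exchanged by $\sigma$ with its mirror $a'\neq a$, only the centro-symmetric diagrams contribute irreducible components to the fixed locus, whence $(\cB_x)^\sigma=\bigcup_{a\ \mathrm{c.s.}}(K^a)^\sigma=\bigcup_{\da}K^{\da}=\cB^{\textup{D}}_x$. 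Transporting through $\tphi$ and using that the loci $(M^a)^\Theta$ exhaust $M(d,v)^\Theta$ yields the claimed isomorphism $M(d,v)^\Theta\simeq\bigcup_{\da}K^{\da}$.

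I expect the third step to be the main obstacle. One must solve the fixed-point equations explicitly enough to see that the marking choices are independent, that distinct choices cut out disjoint nonempty closed pieces rather than overlapping or vanishing loci, and that the resulting index set is in bijection with the unfoldings of Definition~\ref{def:unfold}. Controlling the interplay between the cup/ray relations and the form constraints -- in particular verifying that a single self-paired marked cup contributes exactly the expected binary choice -- is where essentially all of the work is concentrated.
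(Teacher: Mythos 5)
Your overall route coincides with the paper's: use the Henderson--Licata equivariance (Proposition~\ref{prop:HL}, Remark~\ref{rmk:fix}) to reduce everything to the explicit fixed-point equations \eqref{eq:fix} on the quiver side, and then show that solving them for a centro-symmetric $a$ produces exactly the discrete mark choices indexing the unfoldings of Definition~\ref{def:unfold}. However, your argument for the \emph{Moreover} clause has a genuine logical gap. You claim that because a non-centro-symmetric $a$ is exchanged by your involution $\sigma$ with its mirror $a'$, ``only the centro-symmetric diagrams contribute,'' hence $(\cB_x)^\sigma=\bigcup_{a\ \mathrm{c.s.}}(K^a)^\sigma$. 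This inference is invalid: a $\sigma$-fixed point lying in a non-symmetric $K^a$ lies in $K^a\cap K^{a'}$, but an intersection of two irreducible components need not be contained in any third component, so nothing formal forces such a point into a centro-symmetric $K^b$. The paper devotes part (b) of Theorem~\ref{thm:mainDQ} to exactly this issue: $(M^a)^\Theta\subseteq(M^b)^\Theta$ for an explicitly constructed symmetric $b$ (Corollary~\ref{cor:Dbase}(b)), where the construction rests on Lemma~\ref{lem:Dbase1}, which shows that under the fixed-point data $g=(g_i)_i$ of \eqref{eq:fix} the relation imposed by a cup lying in one half of the diagram is equivalent to the relation imposed by its mirror image, so the non-symmetric cups of $a$ can be traded for a symmetric configuration without shrinking the fixed locus. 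Without this step, $\bigcup_{\da}K^{\da}$ could a priori miss fixed points, and the claimed isomorphism $M(d,v)^\Theta\simeq\bigcup_{\da}K^{\da}$ is unproven.

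On the first assertion, you correctly anticipate a binary $\pm$ choice attached to each self-paired cup, and you honestly flag this as the main obstacle; but your plan of solving all fixed-point equations at once lacks the organizing principle that makes the answer match Definition~\ref{def:unfold}. The paper inducts on cups crossing the axis of reflection via Algorithm~\ref{alg:fold}: Lemma~\ref{lem:Dind} gives the splitting $(M^a)^\Theta=(M^{a'})^\Theta\sqcup(M^{a^-})^\Theta$ one crossing cup at a time, and the base case of a symmetric diagram with no crossing cups is Corollary~\ref{cor:Dbase}(a), where $(M^a)^\Theta=M^{\da}$ for the cropped half-diagram $\da$. Since the partial order in Definition~\ref{def:unfold} is generated by precisely the moves $a\mapsto a'$ and $a\mapsto a^-$, the induction and the unfolding combinatorics match by construction rather than by a separate verification. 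Two smaller corrections: the $\Theta$-fixed locus corresponds under $\tphi$ to $\tcSD_{n-k,k}\sqcup\tcSD_{n-k,k}$, two copies (Proposition~\ref{prop:HL}), not a single copy as in your identification $(\tcS_x)^\sigma\simeq\tcSD_x$; and invoking Theorem~\ref{thm:main3} for irreducibility is unnecessary for Theorem B as stated --- in the paper the sets $K^{\da}$ are defined as images of explicit loci, and their irreducibility is established afterwards, not used here.
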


As an application, we obtain the relations on the isotopic flags imposed by marked cups and marked rays:
\[
\begin{split}
\textup{Marked cup relation for } 
\mcup{i}{j}
&
\quad
\Leftrightarrow 
\quad
x^{\lfloor\frac{i}{2}\rfloor}F_i = x^{\lfloor\frac{j+1}{2}\rfloor}F_j,
\\
\textup{Marked ray relation for } 
\mray{i}
&
\quad
\Rightarrow 
\quad 
F_i=
\begin{cases}
\qquad \quad \langle e_1,\ldots,e_{\frac{1}{2}(i+1)},f_1,\ldots,f_{\frac{1}{2}(i-1)}\rangle
&\tif n=2k, 
\\
\langle e_1,\ldots,e_{i-c(i)-1},f_1,\ldots,f_{c(i)}, {f_{c(i)+1}+e_{i-c(i)}}\rangle
&\tif n > 2k.
\end{cases}
\end{split}
\]

\subsection*{Acknowledgment}
M.S.I. would like to thank the University of Georgia for organizing the Southeast Lie Theory Workshop X, where our collaborative research group initially came together. 
The authors also thank the AGANT\footnote{Algebraic Geometry, Algebra, and Number Theory.} group at the University of Georgia for supporting this project. 
This project was initiated as a part of the Summer Collaborators Program 2019 at the School of Mathematics at the Institute for Advanced Study (IAS). The authors thank the IAS for providing an excellent working environment.
We are grateful for useful discussions with Jieru Zhu during the early stages of this project. 
We thank Anthony Henderson, Yiqiang Li, and Hiraku Nakajima for helpful clarifications, and Catharina Stroppel for helpful comments on earlier drafts of our manuscript. 
A.W. was also partially supported by the ARC Discover Grant DP160104912 ``Subtle Symmetries and the Refined Monster''.

\section{Springer fibers and Nakajima quiver varieties of type A}
\subsection{Springer fibers and Slodowy varieties} 
Fix integers $n, k$ such that $n \geq 1$ and $n-k \geq k \geq 0$.
Let $\mathcal{N}$ be the variety of nilpotent elements in $\mathfrak{gl}_n(\CC)$. Let $G=GL_n(\CC)$. 
One may parametrize the $G$-orbits in $\mathcal{N}$ using partitions of $n$ by associating a nilpotent endomorphism to the list of the dimensions of the Jordan blocks. 

Denote the complete flag variety in $\CC^n$ by
\eq
\cB = \{ F_\bullet = (0 = F_0 \subset F_1 \subset \ldots \subset F_n = \CC^n)\mid\dim F_i = i \tforall 1\leq i\leq n\}.
\endeq
We denote the Springer resolution by 
$\mu: \tcN \to \cN, (u, F_\bullet) \mapsto u$,
where
\eq
\tcN = T^*\cB \cong \{(u,F_\bullet)\in \cN \times \cB \mid u (F_i) \subseteq F_{i-1} \tforall i \}.
\endeq
We denote the Springer fiber of $x\in \cN$ by
\eq
\cB_x = \mu\inv(x).
\endeq
For each nilpotent element $x\in \mathcal{N}$,
we denote the
Slodowy transversal slice 
of (the $G$-orbit of) 
$x$ by the variety 
\eq
\mathcal{S}_x =\{ u\in \mathcal{N}\mid [u-x,y]=0\}, 
\endeq
where $(x,y,h)$ is a $\fsl_2$-triple in $\cN$ (here $(0,0,0)$ is also considered an $\mathfrak{sl}_2$-triple).  
We then denote the Slodowy variety associated to $x$ by
\eq
\tcS_{x}=\mu^{-1}(\mathcal{S}_{x}) = \{ (u, F_\bullet) \in \cN\times \cB \mid [u-x,y]=0,  \:\:   u (F_i) \subseteq F_{i-1} \textup{ for all }i \}.
\endeq  
In particular, $x \in \cS_x$ and hence $\cB_x = \mu\inv(x) \subset \mu\inv(\cS_x) = \tcS_x$.

From now on, let $n, k$ be non-negative integers such that $n-k \geq k$.
If $x$ is of Jordan type $(n-k,k)$, we write  
\eq
\cB_{n-k,k}:= \cB_{x},
\quad
\cS_{n-k,k} := \cS_{x},
\quad
\tcS_{n-k,k} := \tcS_{x}.
\endeq
\subsection{Irreducible components of Springer fibers of two-rows}
A {\em cup diagram} is a non-intersecting arrangement of cups and rays below a horizontal axis connecting a subset of $n$ vertices on the axis, and we identify any cup diagram $a$ with the collection of sets of endpoints of cups as below:
\eq\label{eq:cup}
a \equiv \{ \{i_t, j_t\} \subset \{1, \ldots, n\}~|~ 1\leq t \leq k\} 
\quad
\textup{for some}
\quad
k \leq \left\lfloor\frac{n}{2}\right\rfloor.
\endeq
We use a rectangular region that contains all the cups to represent the cup diagram. 
Note that a ray is now a through-strand in this presentation but we still call it a ray.

The irreducible components of the Springer fiber $\mathcal B_{n-k,k}$  can be labeled by the set $B_{n-k,k}$ of all cup diagram on $n$ vertices with $k$ cups and $n-2k$ rays.
For example, when $n=3$ we have
\eq
B_{3,0} = \left\{ \cupc \right\},
\quad
B_{2,1} = \left\{  \cupa~,~ \cupb \right\}.
\endeq
We denote by $K^a$ the irreducible component in $\mathcal B_{n-k,k}$ associated to the cup diagram $a\in B_{n-k,k}$.
\begin{proposition}\label{prop:known_results_about_irred_comp}
Let $x \in \cN$ with Jordan type $(n-k,k)$.
We fix a basis $\{e_i, f_j\mid 1\leq i \leq n-k, 1\leq j \leq k\}$ of $\CC^n$ such that on which $x$ acts by
\[ 
f_k \mapsto f_{k-1} \mapsto \ldots \mapsto f_1 \mapsto 0,
\quad
e_{n-k} \mapsto e_{n-k-1} \mapsto \ldots \mapsto e_1 \mapsto 0.
\]
\begin{enumerate}[(a)]
\item There exists a bijection between the irreducible components of the Springer fiber $\cB_{n-k,k}$ and the set $B_{n-k,k}$ of cup diagrams on $n$ vertices with $k$ cups.
\item The irreducible component $K^a\subseteq \mathcal B_{n-k,k}$  consists of the pairs $(x, F_\bullet) \in\mathcal B_{n-k,k}$ such that
\[
\begin{split}
&\eqref{eq:cup_rel}\textup{ holds for all }(i,j) = (i_t,j_t), 1\leq t \leq k,
\\
&\eqref{eq:ray_rel}\textup{ holds for any } i \not\in \{i_t, j_t ~|~ 1\leq t \leq k\}.
\end{split}
\]
Here the cup relation is given by
\eq\label{eq:cup_rel}
F_j=x^{-\frac{1}{2}(j-i+1)}F_{i-1},
\endeq
where $x^{-1}$ denotes the preimage of a space under the endomorphism $x$;
while the ray relation is given by  
\eq\label{eq:ray_rel}
F_i=F_{i-1} \oplus \langle e_{\frac{1}{2}(i+\rho(i))}\rangle,
\endeq 
where
 $\rho(i)\in\mathbb Z_{>0}$ be the number of rays to the left of $i$ (including itself).
\end{enumerate}
\end{proposition}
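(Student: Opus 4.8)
The plan is to deduce the statement from the known descriptions of two-row Springer fibers: part~(a) from the general parametrization of components together with a combinatorial identification, and part~(b) from the explicit flag conditions of Stroppel--Webster \cite{SW12} (building on Fung \cite{Fun03}). The real content is not a new argument but the translation of these descriptions into the basis $\{e_i, f_j\}$ and into the preimage formulation of the cup and ray relations fixed above.

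For part~(a), I would begin from the general fact that every irreducible component of $\cB_x$ has the same dimension $k$ and that the set of components is in bijection with the standard Young tableaux of shape $(n-k,k)$. The number of such tableaux is $\binom{n}{k}-\binom{n}{k-1}$, which is exactly the cardinality of $B_{n-k,k}$ (non-crossing matchings of $k$ pairs among the $n$ points, the remaining $n-2k$ points carrying rays). I would then recall the classical bijection between two-row standard tableaux and cup diagrams: reading $1,\ldots,n$ in order, each second-row entry is the right endpoint of a cup whose left endpoint is the nearest still-unmatched first-row entry to its left, and the unmatched first-row entries become rays. This is Fung's labelling, so (a) reduces to this combinatorial identification.

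For part~(b), I would take the description of $K^a$ from \cite[Proposition~7]{SW12} and rewrite it in the present notation. The geometric input is that, for $k\geq 1$, a nilpotent $x$ with two Jordan blocks has $\dim\ker x = 2$, so taking a preimage raises dimension by at most $2$, with equality precisely when the subspace lies in $\Im x$. Hence the cup relation \eqref{eq:cup_rel}, together with the flag requirement $\dim F_j = j$, forces $\dim x^{-m}(F_{i-1}) = (i-1)+2m = j$ for $m=\tfrac12(j-i+1)$, i.e.\ it simultaneously defines $F_j$ from $F_{i-1}$ and imposes the general-position condition that cuts out the component. For the ray relation \eqref{eq:ray_rel}, the added line $\langle e_{\frac{1}{2}(i+\rho(i))}\rangle$ is the unique choice keeping $F_\bullet$ compatible with the already-determined nested cups; since $\tfrac12(i+\rho(i)) = \rho(i)+c(i)$, where $c(i)=\tfrac12(i-\rho(i))$ counts the cups left of $i$, the index just records how far into the long Jordan block the $i$-th step reaches. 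To confirm that these equations carve out the components rather than some other locus, I would exhibit the solution set as an iterated $\CP^1$-bundle, building it up by processing the cups in order of nesting (innermost first) and the rays as forced steps; this shows irreducibility and dimension $k$, and since distinct $a \in B_{n-k,k}$ give distinct loci that together exhaust $\cB_{n-k,k}$, equidimensionality from (a) forces each to be a full irreducible component.

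The main obstacle I expect is entirely one of convention-matching rather than of ideas: \cite{Fun03} and \cite{SW12} state their conditions using images of $x$ and a possibly different normalization of the diagram calculus, so the delicate point is verifying that the exact exponent $\tfrac12(j-i+1)$ in \eqref{eq:cup_rel}, the preimage (rather than image) formulation, and the specific vector $e_{\frac12(i+\rho(i))}$ in \eqref{eq:ray_rel} all agree with their formulas under the chosen basis. Pinning down the parities and the precise indexing of $\rho(i)$ is where the care lies.
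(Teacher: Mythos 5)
Your proposal takes essentially the same route as the paper, whose proof is a bare citation: part (a) to Spaltenstein \cite{Spa76} and Vargas \cite{Var79}, and part (b) to \cite[Proposition~7]{SW12} (see also \cite[Theorem~5.2]{Fun03}). The additional material you supply --- the equidimensionality and tableaux count, the standard bijection between two-row tableaux and cup diagrams, and the convention/basis matching --- is a correct elaboration of what those cited results already contain rather than a genuinely different argument.
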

\begin{proof}
See \cite{Spa76} and \cite{Var79} for part (a).
For part (b), we refer to \cite[Proposition~7]{SW12} (see also \cite[Theorem~5.2]{Fun03}).
\end{proof}
\rmk
The ray relation \eqref{eq:ray_rel} is equivalent to that
$F_i = x^{-\frac{1}{2}(i-\rho(i))}(x^{n-k-\rho(i)} F_n )$.
\endrmk

\subsection{Quiver representations}
Now we follow \cite{Maf05} to realize Nakajima's quiver variety 
as equivalence classes of semistable orbits of a certain quiver representation space, 
which is equivalent to the usual proj construction in GIT. 
Let $S(d,v)$ be the quiver representation space of type $A$ (see \cite[Defn.~5]{Maf05}) with respect to dimension vectors $d = (\dim D_i)_i, v = (\dim V_i)_i$.
In this article we do not need the most general $S(d,v)$, and hence we will be focusing on certain special cases which we will elaborate below.
 
For $k < n-k$, let $S_{n-k,k}$ be the quiver representation space in Figure~\ref{figure:Sn-kk}.
\begin{figure}[ht!]
\caption{Quiver representations in $S_{n-k,k}$ and dimension vectors.}
 \label{figure:Sn-kk}
\[ 
\xymatrix@C=18pt@R=9pt{
\dim D_i&0&0&\cdots &1&0&\cdots &1&\cdots &0&0
\\
&
& &  &   
\ar@/_/[dd]_<(0.2){\Gamma_k}
D_k
& &  &  
\ar@/_/[dd]_<(0.2){\Gamma_{n-k}}
D_{n-k}
 & & &&
\\ & & & & & & &  && 
\\ 
&
{V_1}  
	\ar@/^/[r]^{A_1} 
	& 
	\ar@/^/[l]^{B_1}
{V_2}
	\ar@/^/[r]^{A_2}
	&
	\ar@/^/[l]^{B_2}
	\cdots   
	\ar@/^/[r]
	& 
	\ar@/^/[l]
{V_k}  
	\ar@/_/[uu]_>(0.8){\Delta_k} 
	\ar@/^/[r]^{A_{k}}
	&   
{V_{k+1}}  
	\ar@/^/[l]^{B_k}
	\ar@/^/[r]
	&
	\ar@/^/[l]
	\cdots
	\ar@/^/[r]
	&  
{V_{n-k}} 
	\ar@/_/[uu]_>(0.8){\Delta_{n-k}}
	\ar@/^/[r]^{A_{n-k}}
	\ar@/^/[l]
	& 
	\ar@/^/[l]^{B_{n-k}}
	\cdots
	\ar@/^/[r]
	&
	\ar@/^/[l]
{V_{n-2}}  
	\ar@/^/[r]^{A_{n-2}}
	& 
{V_{n-1}} 
\ar@/^/[l]^{B_{n-2}}  
\\
\dim V_i & 1&2&\ldots&k&k&\ldots&k&\ldots&2&1
}
\] 
\end{figure}
\rmk\label{rmk:0}
Throughout this article, any space with an ineligible subscript is understood as a zero space (e.g., $V_0 = \{0\}, V_{n} =\{0\}$). Any linear map with an ineligible subscript is understood as a zero map (e.g., $A_{n-1}:V_{n-1}\to \{0\}, B_0:V_1 \to \{0\}$).
\endrmk

In other words, $S_{n-k,k}$ can be identified as the space of quadruples $(A,B,\Gamma, \Delta)$ of collections of linear maps  of the following form:
\[
\left(A = (A_i:V_i\to V_{i+1})_{i=1}^{n-2}, B= (B_i:V_{i+1}\to V_i)_{i=1}^{n-2}, \Gamma = (\Gamma_i:D_i\to V_i)_{i=1}^{n-1}, \Delta = (\Delta_i:V_i\to D_i)_{i=1}^{n-1}\right),
\] 
with dimension vectors $d = (\dim D_i)_i$ and $v = (\dim V_i)_i$ given by
\[
\dim V_i = \begin{cases}
\quad i &\tif i \leq k, \\
\quad k &\tif k\leq i \leq n-k, \\
n-i &\tif i \geq n-k,
\end{cases}
\qquad
\dim D_i = \begin{cases}
1 &\tif i=k, n-k, \\
0 &\textup{otherwise}.
\end{cases}  
\]
For the special case $S_{k,k}$ when $n =2k$, we use instead the quiver representations of the form in Figure~\ref{figure:Skk} below.

\begin{figure}[ht!]
\caption{Quiver representations in $S_{k,k}$ and the dimension vectors.}
 \label{figure:Skk}
\[ 
\xymatrix@C=18pt@R=9pt{
\dim D_i& 0&0&\ldots&0&2&0&\ldots&0&0
\\
&
& &  &   &
\ar@/_/[dd]_<(0.2){\Gamma_k}
D_k
& &  &  &
\\ & & & & & & &  && 
\\ 
&
{V_1}  
	\ar@/^/[r]^{A_1} 
	& 
	\ar@/^/[l]^{B_1}
{V_2}
	\ar@/^/[r]^{A_2}
	&
	\ar@/^/[l]^{B_2}
	\cdots
	\ar@/^/[r]
	&   
	\ar@/^/[l]
{V_{k-1}} 
	\ar@/^/[r]
	& 
	\ar@/^/[l]
{V_k}  
	\ar@/_/[uu]_>(0.8){\Delta_k} 
	\ar@/^/[r]^{A_{k}}
	&   
	\ar@/^/[l]^{B_k}
{V_{k+1}}  
	\ar@/^/[r]
	& 
	\ar@/^/[l]
	\cdots
	\ar@/^/[r]
	&
	\ar@/^/[l]
{V_{2k-2}}  
	\ar@/^/[r]^{A_{2k-2}}
	& 
{V_{2k-1}} 
\ar@/^/[l]^{B_{2k-2}}  
\\
\dim V_i & 1&2&\ldots&k-1&k&k-1&\ldots&2&1
}
\] 
\end{figure}

Following Nakajima, an element $(A,B,\Gamma,\Delta) \in S(d,v)$ is called {\em admissible} if the Atiyah-Drinfeld-Hitchin-Manin (ADHM) equations are satisfied. Equivalently, for all $1\leq i\leq n-1$,
\eq\label{eq:L1}
B_i A_i =  A_{i-1} B_{i-1} + \Gamma_i \Delta_i.
\endeq
An admissible element is called {\em stable} if, for each collection $U = (U_i \subseteq V_i)_i$ of subspaces satisfying that
\eq
\Im \: \Gamma_i \subseteq U_i, 
\quad
A_i(U_i) \subseteq U_{i+1}, 
\quad
B_i(U_{i+1}) \subseteq U_i 
\quad\quad 
\tforall i,
\endeq
it follows that $U_i = V_i$ for all $i$.
We will use the following equivalent notion of stability due to Maffei:
\begin{lemma}[{\cite[Lemmas~14, 2)]{Maf05}}]
An admissible element $(A,B,\Gamma,\Delta) \in S(d,v)$ is stable if and only if, for all $1\leq i\leq n-1$,
\eq\label{eq:L2}
\Im \: A_{i-1} + \sum_{j\geq i} \Im \: \Gamma_{j\to i} = V_i,
\endeq
where it is understood that $A_0 =0$, and that $\Gamma_{j\to i}$, for all $i, j$, is the natural composition from $D_j$ to $V_i$, i.e.,
\eq\label{def:Gaij}
\Gamma_{j\to i} = 
\begin{cases} 
B_i  \ldots B_{j-1} \Gamma_j &\tif j \geq i;
\\
A_{i-1} \ldots A_j \Gamma_j &\tif j \leq i.
\end{cases}
\endeq
\end{lemma}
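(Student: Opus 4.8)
The idea is to manufacture, out of the data $(A,B,\Gamma,\Delta)$, one distinguished family of subspaces whose being full is visibly equivalent to~\eqref{eq:L2}, and to bring in the stability hypothesis only through this single family. Concretely, I would set
\[
W_i := \Im A_{i-1} + \sum_{j\geq i}\Im \Gamma_{j\to i}\subseteq V_i \tforall 1\leq i\leq n-1,
\]
so that condition~\eqref{eq:L2} is literally the assertion $W_i = V_i$ for all $i$. The entire proof then reduces to comparing the family $W=(W_i)_i$ against the invariance conditions appearing in the definition of stability, namely that a family $U=(U_i)_i$ with $\Im\Gamma_i\subseteq U_i$ and stable under all $A_i,B_i$ must be full.

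First I would verify that $W$ itself is such an invariant family. The containment $\Im\Gamma_i\subseteq W_i$ is immediate from the $j=i$ summand, and $A$-invariance is equally immediate since $A_i(W_i)\subseteq \Im A_i$, which is one of the defining summands of $W_{i+1}$. The only point where the admissibility equation~\eqref{eq:L1} is genuinely used is $B$-invariance. Here I would split $B_i(W_{i+1})$ into its two contributions. For the framing part, $B_i(\Im A_i)=\Im(B_iA_i)$, and~\eqref{eq:L1} rewrites $B_iA_i = A_{i-1}B_{i-1}+\Gamma_i\Delta_i$, whose image lies in $\Im A_{i-1}+\Im\Gamma_i\subseteq W_i$. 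For the $\Gamma$-part, telescoping the $B$'s gives the identity $B_i\Gamma_{j\to i+1}=\Gamma_{j\to i}$ for every $j\geq i+1$, so $B_i(\Im\Gamma_{j\to i+1})\subseteq W_i$ by definition of $W_i$. This establishes that $W$ is an invariant family in the sense of stability.

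With this in place the forward implication is immediate: if the element is stable, the invariant family $W$ must be full, i.e. $W_i=V_i$ for all $i$, which is exactly~\eqref{eq:L2}. For the converse I would argue by induction on $i$ that~\eqref{eq:L2} forces every invariant family $U$ to be full. In the base case $A_0=0$ gives $W_1=\sum_{j\geq 1}\Im\Gamma_{j\to 1}$, and each $\Gamma_{j\to 1}=B_1\cdots B_{j-1}\Gamma_j$ sends its image into $U_1$ by $B$-invariance together with $\Im\Gamma_j\subseteq U_j$; hence $V_1=W_1\subseteq U_1$. For the inductive step, once $U_{i-1}=V_{i-1}$ one has $\Im A_{i-1}=A_{i-1}(U_{i-1})\subseteq U_i$, while the remaining terms $\Im\Gamma_{j\to i}$ with $j\geq i$ again lie in $U_i$ by $B$-invariance, so $V_i=W_i\subseteq U_i$. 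This yields $U=V$, i.e. stability.

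The step I expect to require the most care is the $B$-invariance computation, specifically keeping the index bookkeeping of the compositions $\Gamma_{j\to i}$ straight: one must apply~\eqref{eq:L1} at the correct node and recognize the telescoping identity $B_i\Gamma_{j\to i+1}=\Gamma_{j\to i}$. A conceptual subtlety worth flagging is that $W$ is \emph{not} the minimal invariant family containing the $\Im\Gamma_i$ --- the summand $\Im A_{i-1}=A_{i-1}(V_{i-1})$ need not sit inside an arbitrary invariant family $U$ unless $U_{i-1}$ is already full --- which is precisely why the converse must be run as an induction producing $U_i=V_i$ rather than as a blanket containment $W\subseteq U$.
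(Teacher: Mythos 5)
Your proof is correct. Note that the paper itself does not prove this lemma at all: it is quoted verbatim from Maffei \cite[Lemma~14]{Maf05}, so there is no in-paper argument to compare against. Your argument is the standard one and it is complete: the family $W_i = \Im A_{i-1} + \sum_{j\geq i}\Im\Gamma_{j\to i}$ is checked to satisfy the three invariance conditions in the definition of stability (with admissibility entering exactly once, via $B_iA_i = A_{i-1}B_{i-1}+\Gamma_i\Delta_i$ to get $B_i(\Im A_i)\subseteq \Im A_{i-1}+\Im\Gamma_i$, and the telescoping $B_i\Gamma_{j\to i+1}=\Gamma_{j\to i}$ handling the remaining summands), which gives the forward implication; the converse is correctly run as an induction on $i$, using $U_{i-1}=V_{i-1}$ to absorb $\Im A_{i-1}$ into $U_i$ and $B$-invariance plus $\Im\Gamma_j\subseteq U_j$ to absorb each $\Im\Gamma_{j\to i}$. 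Your closing observation is also the right one to flag: $W$ need not be contained in an arbitrary invariant family $U$ (because of the $\Im A_{i-1}$ summand), so the converse genuinely requires the inductive fullness argument rather than a blanket containment $W\subseteq U$; this is precisely the structure of Maffei's own proof.
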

Denote the subspace (which we call the {\em stable locus}) of $S(d,v)$ consisting of elements that are admissible (i.e., the ADHM equations \eqref{eq:L1} are satisfied) and stable by 
\eq\label{def:L+}
\Lambda^+(d,v) = \{(A,B,\Gamma,\Delta)  \in S(d,v) \mid \eqref{eq:L1}, \eqref{eq:L2}\},
\endeq
We denote by $\Lambda_{n-k,k}$ as the set of admissible representations in $S_{n-k,k}$, and $\Lambda^+_{n-k,k}$ as its stable locus.
\subsection{Nakajima quiver varieties}
Let $V = \prod_i V_i$, $D = \prod_i D_i$.
Now we define on any quiver representation $(A,B,\Gamma, \Delta)$  an action of $\GL(V) = \prod_i \GL(V_i)$ 
by
\eq\label{def:GL action}
\begin{aligned}
&g\cdot (A,B,\Gamma, \Delta) = ((g_{i+1}A_i g_i\inv)_i, (g_i B_i g_{i+1}\inv)_i, (g_i\Gamma_i)_i, (\Delta_ig_i\inv)_i), & g=(g_i)_i \in \GL(V).
\end{aligned}
\endeq
We denote the Nakajima quiver variety as stable $\GL(V)$-orbits on $S(d,v)$ satisfying pre-projective conditions, i.e., 
\eq\label{def:M}
M(d,v) := \Lambda^+(d,v)/\GL(V).
\endeq
Denote also by $M_{n-k,k}$ the Nakajima quiver variety for $S_{n-k,k}$.
We also call the projection onto the moduli space of the $\GL(V)$-orbits by
\eq\label{def:p}
p_{d,v}: \Lambda^+(d,v) \to M(d,v).
\endeq
Denote also by $p_{n-k,k}$ for the projection onto $M_{n-k,k}$.

It is first proved in \cite[Thm.~7.2]{Na94} that there is an explicit isomorphism $M(d,v) \to \tcS_x$ for certain $d,v,x$  using a different stability condition.
Here we recall a variant due to Maffei that suits our need. 
\begin{prop}[{\cite[Lemma~15]{Maf05}}]\label{prop:MS}
If $\dim D_i =0$ for all $i$ unless $i=1$, then the assignment below defines an isomorphism $\tphi = \tphi(d,v): M(d, v) \simeq \tcS_x$:
\eq
p_{d,v}(A, B, \Gamma, \Delta) 
\mapsto 
(\Delta_1\Gamma_1, (0 \subset \ker\Gamma_1 
\subset \ker \Gamma_{1\to2} 
\subset
\ldots \subset \ker \Gamma_{1\to n})),
\endeq
where $x = \Delta_1 \Gamma_1$. 
\end{prop}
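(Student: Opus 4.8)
The plan is to exhibit an explicit two-sided inverse, so that $\tphi$ is visibly a morphism with a morphism inverse rather than merely a bijection. First I would record how the concentrated framing simplifies the data: since $\dim D_j=0$ for $j\neq 1$, the composites in \eqref{def:Gaij} collapse to $\Gamma_{1\to i}=A_{i-1}\cdots A_1\Gamma_1\colon D_1\to V_i$, the ADHM equations \eqref{eq:L1} reduce to $B_1A_1=\Gamma_1\Delta_1$ together with $B_iA_i=A_{i-1}B_{i-1}$ for $i\geq 2$, and Maffei's stability \eqref{eq:L2} becomes the single requirement that $\Gamma_1$ and every $A_i$ be surjective. In particular each $\Gamma_{1\to i}$ is then surjective, so $\dim\ker\Gamma_{1\to i}=\dim D_1-\dim V_i$ and the subspaces $\ker\Gamma_{1\to i}$ form the flag $F_\bullet$ in $D_1\cong\CC^n$ appearing in the statement.

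Next I would check that the forward assignment is well defined. It is constant on $\GL(V)$-orbits because \eqref{def:GL action} gives $\Delta_1\Gamma_1\mapsto\Delta_1 g_1\inv g_1\Gamma_1=\Delta_1\Gamma_1$ and $\Gamma_{1\to i}\mapsto g_i\Gamma_{1\to i}$, which does not change $\ker\Gamma_{1\to i}$. To see that the image lands in $\tcN$ I would prove the ladder identity
\[
\Gamma_{1\to i-1}\,\Delta_1\Gamma_1=B_{i-1}\,\Gamma_{1\to i},
\]
obtained by substituting $\Gamma_1\Delta_1=B_1A_1$ and then repeatedly replacing $A_jB_j$ by $B_{j+1}A_{j+1}$ using \eqref{eq:L1}. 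Writing $u=\Delta_1\Gamma_1$, this yields $u(\ker\Gamma_{1\to i})\subseteq\ker\Gamma_{1\to i-1}$, i.e. $u(F_i)\subseteq F_{i-1}$; since $F_\bullet$ is complete it also forces $u$ nilpotent, so $(u,F_\bullet)\in\tcN$.

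For the inverse I would run the construction backwards through quotient spaces. Given $(u,F_\bullet)\in\tcS_x$, set $V_i:=\CC^n/F_i$, let $\Gamma_1\colon\CC^n\to\CC^n/F_1$ and $A_i\colon\CC^n/F_i\to\CC^n/F_{i+1}$ be the canonical projections (well defined as $F_i\subseteq F_{i+1}$), and let $\Delta_1\colon\CC^n/F_1\to\CC^n$, $[v]\mapsto uv$, and $B_i\colon\CC^n/F_{i+1}\to\CC^n/F_i$, $[v]\mapsto[uv]$, be the maps induced by $u$ (well defined since $u(F_j)\subseteq F_{j-1}$). A direct check shows $\Delta_1\Gamma_1=u$ and $\ker\Gamma_{1\to i}=F_i$, that the ADHM equations hold (both sides of \eqref{eq:L1} send $[v]$ to $[uv]$), and that all structure maps are surjective, hence stable; this produces a point of $M(d,v)$. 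The two composites are the identity: forward after backward is immediate, while backward after forward is witnessed by $g=(g_i)\in\GL(V)$ with $g_i\colon\CC^n/F_i\xrightarrow{\sim}V_i$ the isomorphism induced by $\Gamma_{1\to i}$, which intertwines the canonical representation with the original one.

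The \textbf{main obstacle} is the transversal-slice part of the statement: identifying the image precisely with $\tcS_x$ rather than with all of $\tcN$, i.e. showing that $u=\Delta_1\Gamma_1$ lands in the Slodowy slice $\cS_x$ (equivalently $[u-x,y]=0$) and that every point of $\tcS_x$ is attained with the prescribed Jordan type. This is exactly where the dimension vectors $d,v$ and the moment-map structure of the ADHM equations enter, and it is the only step that is not a formal manipulation; I would handle it by matching the Jordan type of $\Delta_1\Gamma_1$, read off from the ranks of the $\Gamma_{1\to i}$, against the datum encoded by $(d,v)$, or by deducing it as the concentrated-framing specialization of Maffei's general isomorphism \cite[Theorem~8]{Maf05}. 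Finally, algebraicity is routine: on the stable locus the ranks of the $\Gamma_{1\to i}$ are locally constant, so the kernels $F_i$ and the quotient construction vary algebraically, giving mutually inverse morphisms and hence the desired isomorphism of varieties.
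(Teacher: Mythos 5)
The paper gives no proof of this proposition at all: it is imported verbatim as \cite[Lemma~15]{Maf05}, so your write-up supplies an argument where the paper supplies only a citation. Your core argument is correct, and it is in substance the standard argument behind Maffei's lemma: the reduction of \eqref{eq:L1} and \eqref{eq:L2} under the concentrated framing, the $\GL(V)$-invariance of the assignment, the ladder identity $\Gamma_{1\to i-1}\Delta_1\Gamma_1=B_{i-1}\Gamma_{1\to i}$ (exactly the right computation, and the same manipulation the paper later reuses in the proof of Proposition~\ref{prop:a1}), nilpotency of $u$, the quotient-space inverse, and the intertwiner $g_i$ induced by the surjections $\Gamma_{1\to i}$ all check out. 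The only thing you leave implicit is the dimension hypothesis $\dim D_1=n$ and $\dim V_i=n-i$, without which the conclusion (a complete flag in $\CC^n$) does not parse; it does hold in the paper's application, cf.\ Figure~\ref{figure:tS}.

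What is not right is your final paragraph: the step you single out as the ``main obstacle'' is vacuous, and both remedies you propose for it would fail. Your own inverse construction is defined on \emph{every} pair $(u,F_\bullet)$ with $uF_i\subseteq F_{i-1}$, so the two composite identities you verified already prove that the image of $\tphi$ is the whole incidence variety $\tcN$. That is the correct reading of the statement: with the framing concentrated at vertex $1$ and complete-flag dimension vectors, the relevant Slodowy datum is the zero $\fsl_2$-triple (which the paper explicitly allows), so $\cS_x=\cS_0=\cN$ and $\tcS_x=\tcS_0=\tcN$; the clause ``where $x=\Delta_1\Gamma_1$'' merely names the first coordinate of the image point. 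There is no Jordan-type condition to check, and there cannot be one: the Jordan type of $\Delta_1\Gamma_1$ is not determined by $(d,v)$ --- applying your inverse to $(0,F_\bullet)$ shows every complete flag occurs with $u=0$ --- and since $0\in\cS_x$ forces $[x,y]=h=0$ and hence $x=0$, the image cannot lie in $\tcS_x$ for any nonzero $x$. Consequently ``matching the Jordan type against the datum encoded by $(d,v)$'' has nothing to match, and falling back on \cite[Theorem~8]{Maf05} would be circular, since Maffei proves Theorem~8 \emph{using} Lemma~15, exactly as this paper derives Proposition~\ref{prop:MS2} from the present proposition together with Proposition~\ref{prop:Phi}. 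The genuine slice conditions ($[u-x,y]=0$ for nonzero $x$) enter only at that later stage, through the transversality equations \eqref{eq:M1a}--\eqref{eq:M1b}, not here. Delete the last paragraph, state the dimension hypothesis, and your proof is complete.
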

In general, Proposition~\ref{prop:MS} does not apply to all $M_{n-k,k}$ for $n \geq 3$. 
Our next step is to describe an explicit isomorphism $\tphi_{n-k,k}: M_{n-k,k} \simeq \tcS_{n-k,k}$ due to Maffei in Proposition~\ref{prop:MS2}.
\subsection{Maffei's isomophism}
Following \cite{Maf05}, we utilize a modified quiver representation space $\widetilde{S}_{n-k,k}$ as in Figure~\ref{figure:tS} below, for each $S_{n-k,k}$:

\begin{figure}[ht!]
\caption{Modified quiver representations in $\widetilde{S}_{n-k,k}$.}
 \label{figure:tS}
\[ 
\xymatrix@C=18pt@R=9pt{
\dim \tD_i& n&0&\ldots&0&0
\\
&\ar@/_/[dd]_<(0.2){\tGa_1}
\tD_1
& &  &  &
\\ & & & & & & &  && 
\\ 
&
{\tV_1}  
	\ar@/_/[uu]_>(0.8){\tDe_1} 
	\ar@/^/[r]^{\tA_{1}}
	&   
	\ar@/^/[l]^{\tB_1}
{\tV_{2}}  
	\ar@/^/[r]
	& 
	\ar@/^/[l]
	\cdots
	\ar@/^/[r]
	&
	\ar@/^/[l]
{\tV_{n-2}}  
	\ar@/^/[r]^{\tA_{n-2}}
	& 
{\tV_{n-1}} 
\ar@/^/[l]^{\tB_{n-2}}  
}
\] 
\end{figure}

Here the vector spaces $(\tD_i, \tV_i)$ are given by 
\eq
\tD_1 = D'_0,
\quad
\tV_i = V_i \oplus D'_i,
\endeq
where
\eq\label{def:D'}
D'_i = \begin{cases}
\< e_1, \ldots, e_{n-k-i}, f_1, \ldots, f_{k-i}\>
&\tif i \leq k-1, 
\\
\qquad\hspace{2mm} \< e_1, \ldots, e_{n-k-i}\>
&\tif k \leq i \leq n-k-1, 
\\
\qquad\qquad\quad \{0\} &\tif n-k \leq i \leq n-1.
\end{cases}
\endeq
Note that we utilize the following identification with the spaces $D^{(h)}_j$ in \cite{Maf05}:
\eq
\begin{split} 
\<e_i\> \equiv D^{(i)}_{n-k},
\quad
\<f_i\> \equiv D^{(i)}_{k}&\quad\tif n > 2k, 
\\
\<e_i, f_i\> \equiv D^{(i)}_k &\quad\tif n =2k.
\end{split}
\endeq
%

Denote by $\td = (\dim \tD_i)_i, \tv = (\dim \tV_i)_i$ the dimension vectors.
The advantage of manipulating over such modified quivers is that Proposition~\ref{prop:MS} applies, and hence it produces an isomorphism between the Nakajima quiver variety $M(\td, \tv)$ and the Slodowy variety $\tcS(\td, \tv)$ for the dimension vectors $\td$ and $\tv$.

Now we identify the linear maps $\tA_i, \tB_i, \tGa_i, \tDe_i$ as block matrices in light of \cite[(9)]{Maf05}. 
For example, we have
\eq\label{eq:TTSS}
\tGa_1
=
\begin{blockarray}{ *{8}{c} }
&  f_b & \dots & e_b \\
\begin{block}{ c @{\quad} ( @{\,} *{7}{c} @{\,} )}
V_1& \TT_{0,V}^{f,b}&\dots & \TT_{0,V}^{e,b}\\
f_a & \TT_{0,f,a}^{f,b}&\dots & \TT_{0,f,a}^{e,b}\\
\vdots & \vdots & \ddots & \vdots \\
e_a  & \TT_{0,e,a}^{f,b}&\dots& \TT_{0,e,a}^{e,b}  \\
\end{block}
\end{blockarray}
\normalsize
~~~~~~, 
\quad 
\tDe_1=
\small 
\begin{blockarray}{ *{8}{c} }
& V_1 &   f_b& \dots &e_b\\
\begin{block}{ c @{\quad} ( @{\,} *{7}{c} @{\,} ) }
f_a & \SS^V_{0,f,a}& \SS_{0,f,a}^{f,b}&\dots & \SS_{0,f,a}^{e,b}\\
\vdots & \vdots & \vdots & \ddots & \vdots \\
 e_a & \SS^V_{0,e,a} & \SS_{0,e,a}^{k,b}&\dots& \SS_{0,e,a}^{e,b}  \\
\end{block}
\end{blockarray}
\normalsize 
~~~~~~,
\endeq
\eq
\tA_1
=
\small 
\begin{blockarray}{ *{8}{c} }
&V_1&  f_b &e_b\\
\begin{block}{ c @{\quad} ( @{\,} *{7}{c} @{\,} ) }
V_2& \mathbb{A}_1& \TT_{1,V}^{f,b} & \TT_{1,V}^{e,b}\\
f_a &\TT_{1,f,a}^{V}& \TT_{1,f,a}^{f,b} & \TT_{1,f,a}^{e,b}\\
e_a &\TT_{1,e,a}^{V} & \TT_{1,e,a}^{f,b}& \TT_{1,e,a}^{e,b}  \\
\end{block}
\end{blockarray}
\normalsize 
~~~~~~, 
\quad 
\tB_1=
\small 
\begin{blockarray}{ *{8}{c} }
& V_2 &  f_b &e_b\\
\begin{block}{ c @{\quad} ( @{\,} *{7}{c} @{\,} ) }
V_1& \mathbb{B}_1 & \SS_{1,V}^{f,b} & \SS_{1,V}^{e,b}\\
f_a & \SS^V_{1,f,a}& \SS_{1,f,a}^{f,b} & \SS_{1,f,a}^{e,b}\\
e_a & \SS^V_{1,e,a} & \SS_{1,e,a}^{f,b}& \SS_{1,e,a}^{e,b}  \\
\end{block}
\end{blockarray}~~~~~~,
\endeq
\normalsize
with respect to the basis vectors indicated above and to the left of each matrix. 
In other words, the variables $\mathbb{A}, \mathbb{B}, \SS, \TT$ are certain linear maps with domains and codomains specified as below, for $\phi, \psi \in \{e,f\}$:
\eq
\begin{split}
&\mathbb{A}_i: V_i \to V_{i+1},
\quad
\mathbb{B}_i: V_i \to V_{i+1},
\\
&
\SS_{i,\phi,a}^V: V_{i+1} \to \<\phi_a\>,
\quad
\SS_{i,V}^{\phi,a}: \<\phi_a\> \to V_i,
\quad
\SS_{i,\phi,a}^{\psi, b}: \<\psi_b\> \to \<\phi_a\>,
\\
&
\TT_{i,\phi,a}^V: V_{i} \to \<\phi_a\>,
\quad
\TT_{i,V}^{\phi,a}: \<\phi_a\> \to V_{i+1},
\quad
\TT_{i,\phi,a}^{\psi, b}: \<\psi_b\> \to \<\phi_a\>.
\end{split}
\endeq
A definition of the transversal element can be found in \cite[Defn.~16]{Maf05}.
In our context it is convenient to rewrite the definition as below: 
let
$\pi_{D'_i}$ be the projection onto $D'_i$ (recall \eqref{def:D'}), 
let $\tA_0 = \tGa_1, \tB_0 = \tDe_1$, 
and let $(x_i, y_i, [x_i, y_i])$ be the fixed $\fsl_2$-triple on $\fsl(D'_i)$ uniquely determined by
\eq\label{def:sl2}
\begin{aligned}
x_i(e_h) &= 
	\begin{cases}
	e_{h-1} &\tif 1< h \leq n-k-i, 
	\\
	\:\:\: 0 &\otw,
	\end{cases}
&&
y_i(e_h) = 
	\begin{cases}
	h(n-k-i-h)e_{h+1} &\tif 1\leq h < n-k-i, 
	\\
	\qquad \quad 0 &\otw,
	\end{cases}
\\
x_i(f_h) &= 
	\begin{cases}
	f_{h-1} &\tif 1< h \leq k-i, 
	\\
	\:\:\: 0 &\otw,
	\end{cases}
&&
y_i(f_h) = 
	\begin{cases}
	h(k-i-h) f_{h+1} &\tif 1\leq h < k-i, 
	\\
	\qquad \quad 0 &\otw.
	\end{cases}    
\end{aligned}
\endeq
An admissible quadruple $(\tA, \tB, \tGa, \tDe)$ in $\widetilde{S}_{n-k,k}$  is called {\em transversal} if the following conditions hold, for $0\leq i \leq n-2$:
\eq\label{eq:M1a}
\left[
\left.\pi_{D'_i} \tB_i \tA_i\right|_{D'_i} -x_i, y_i
\right] = 0,
\endeq
\eq\label{eq:M1b}
\begin{aligned}
\TT^{f,b}_{i,f,a} = \TT^{e,b}_{i,e,a}  &= 0,        
&\tif b > a+1;
&&\SS^{f,b}_{i,f,a} = \SS^{e,b}_{i,e,a}  &= 0,        
&\tif b > a;  
	\\
\TT^{f,b}_{i,f,a} = \TT^{e,b}_{i,e,a}  &= \textup{id},          
&\tif b = a+1;
&&\SS^{f,b}_{i,f,a} = \SS^{e,b}_{i,e,a}  &= \textup{id},          
&\tif b = a;  
	\\
\TT^{e,b}_{i,f,a}  &= 0,         
&\tif b \geq a+1; 
&&\TT^{f,b}_{i,e,a}  &= 0,         
&\tif b \geq a+1+2k-n; 
	\\
\SS^{e,b}_{i,f,a}  &= 0,         
&\tif b \geq a; 
&&\SS^{f,b}_{i,e,a}  &= 0,         
&\tif b \geq a+2k-n;
	\\
\TT^{V}_{i,j,a}        &= 0;         
&&&   
\SS^{j,b}_{i,V}        &= 0;
	\\
\TT^{j,b}_{i,V}    &= 0,         
&\tif b \neq 1;
&&\SS^{V}_{i,j,a} &=0
&\tif a \neq j-i.
\end{aligned}
\endeq
Denote the subspace in $\widetilde{S}_{n-k,k}$ consisting of transversal (hence admissible) and stable elements by 
\eq\label{def:cT+}
\fT^+_{n-k,k} = \{(\tA, \tB, \tGa, \tDe)\in \widetilde{S}_{n-k,k}\mid\eqref{eq:M1a}, \eqref{eq:M1b} \eqref{eq:M2}, \eqref{eq:M3} \},
\endeq
where the relations other than the transversal ones are
\begin{align}
\textup{(admissibility)}~&\tB_i\tA_i = \tA_{i-1} \tB_{i-1} + \tGa_i \tDe_i
\quad \mbox{for } 1\leq i\leq n-1, 
\label{eq:M2}
\\
\textup{(stability)}~&\Im \: \tA_{i-1} + \sum_{j\geq i} \Im \: \tGa_{j\to i} = \tV_i
\quad \mbox{for } 1\leq i\leq n-1,
\label{eq:M3}
\end{align}
where $\tGa_{j\to i}$ is defined similarly as \eqref{def:Gaij}.

\rmk
The system of equations \eqref{eq:M1b} is not the easiest to work with. 
For example, it implies that the map $\tGa_1$ must be of the following form:
\eq\label{eq:TTSS2}
\begin{tikzpicture}[baseline={(0,0)}, scale = 0.8]
\draw (1.2, 1.8) node {\scalebox{1.5}{$0$}};
\draw (-4.5, -.5) node {\scalebox{1.5}{$\TT_{0,e,\bullet}^{e,\bullet}$}};
\draw (7.2, 1.8) node {\scalebox{1.5}{$0$}};
\draw (-4.7, -2.7) node {\scalebox{1.5}{$\TT_{0,f,\bullet}^{e,\bullet}$}};
\draw (0, -2) node {\scalebox{1.5}{$0$}};
\draw (4.5, -.5) node {\scalebox{1.5}{$\TT_{0,e,\bullet}^{f,\bullet}$}};
\draw (8.2, -1.7) node {\scalebox{1.5}{$0$}};
\draw (4.5, -2.7) node {\scalebox{1.5}{$\TT_{0,f,\bullet}^{f,\bullet}$}};
  \draw (0,0) node {$
  \tGa_1
=
  \small 
\begin{blockarray}{ *{12}{c} }
&  e_1 & e_2 & \dots &\dots &\dots & e_{n-k} & f_1 & \dots & \dots & f_k 
\\
\begin{block}{ c @{\quad} ( @{\,} cccccc|ccccc @{\,} )}
V_1& \TT_{0,V}^{e,1}& 0 & \dots & \dots  & \dots& 0 & \TT_{0,V}^{f,1} & \dots & \dots & 0
\\
\cline{1-11}
e_1 & \TT_{0,e,1}^{e,1}& 1 &  & & &  &  &  &   & 
\\
\vdots & & \ddots & \ddots &  &  & &0 &  &   & 
\\
\vdots & &  & \ddots & \ddots &  & & & \ddots &   & 
\\
\vdots & &  &  & \ddots & \ddots &  & & & \ddots  & 
\\
e_{n-k-1} &  & &&& \TT_{0,e,n-k-1}^{e,n-k-1}&1 &  & && 0
\\
\cline{1-11}
f_1 & \TT_{0,f,1}^{e,1} &0 &&&&&\TT_{0,f,1}^{f,1} &1&&
\\
\vdots &  & \ddots &\ddots &&&& &\ddots&\ddots
\\
f_{k-1}
&&&\TT_{0,f,k-1}^{e,k-1}&0&&&&&\TT_{0,f,k-1}^{f,k-1}&1
\\
\end{block}
\end{blockarray}
\normalsize
~~~~.
$};
\end{tikzpicture}
\endeq
One can then solve for all the unknown variables $\TT_{0,\bullet,\bullet}^{\bullet,\bullet}$ using stability and admissibility conditions, together with the first transversality condition \eqref{eq:M1a}.
In general the solutions are very involved (see \cite[Lemma~18]{Maf05}.
We will use the proposition below to show that the solutions are actually very simple in our setup.
\endrmk
\begin{prop}\label{prop:Phi}
Let $(A, B, \Gamma, \Delta) \in \Lambda_{n-k,k}$. 
\begin{enumerate}[(a)]
\item There is a unique element $(\tA, \tB, \tGa, \tDe) \in \fT_{n-k,k}$ such that
\eq\label{eq:uniq}
\mathbb{A}_i = A_i,
\quad
\mathbb{B}_i = B_i,
\quad
\Gamma_k = \TT^{f,1}_{k-1,V},
~
\Gamma_{n-k} = \TT^{e,1}_{n-k-1,V},
\quad
\Delta_k = \SS^{V}_{k-1,f,1},
~
\Delta_{n-k} = \SS^{V}_{n-k-1,e,1}.
\endeq
\item The assignment in part (a) restricts to a $\GL(V)$-equivariant isomorphism $\Phi : \Lambda^+_{n-k,k} \overset{\sim}{\to} \fT^+_{n-k,k}$.
\item The map $\Phi$ induces an isomorphism $\Phi_M:M_{n-k,k} \overset{\sim}{\to} p_{n-k,k}(\fT^+_{n-k,k})$.
\end{enumerate}
\end{prop}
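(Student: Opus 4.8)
The plan is to prove (a) as an explicit existence-and-uniqueness statement for the equations cutting out $\fT_{n-k,k}$, and then to deduce (b) and (c) by formal arguments on top of it. For (a), I would first impose the transversality relations \eqref{eq:M1b}, which force every off-diagonal entry of the block matrices $\tA_i,\tB_i,\tGa_i,\tDe_i$ to be $0$ or $\id$ --- as already exhibited for $\tGa_1$ in \eqref{eq:TTSS2} --- leaving only $\mathbb{A}_i,\mathbb{B}_i$ and a short list of $\TT$- and $\SS$-blocks free. After setting $\mathbb{A}_i=A_i$, $\mathbb{B}_i=B_i$ and the four framing blocks as in \eqref{eq:uniq}, the remaining unknowns are the $D'_i$-components of $\tA_i,\tB_i,\tGa_1,\tDe_1$, which I would determine from the admissibility equations \eqref{eq:M2} together with the $\fsl_2$-slice condition \eqref{eq:M1a}. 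The decisive simplification is that, since $x$ has only two Jordan blocks, each $D'_i$ is small and the system becomes triangular in $i$, so that existence and uniqueness both reduce to a finite, explicit back-substitution; this is exactly the point at which the generally intricate solution of \cite[Lemma~18]{Maf05} collapses to the simple formulas \eqref{eq:uniq}. I expect the main obstacle to be the consistency check here --- that the $D'_i$-blocks forced by \eqref{eq:M2} simultaneously satisfy the diagonal condition \eqref{eq:M1a} --- and I would organize the bookkeeping so that the $V_i$-diagonal block of \eqref{eq:M2} reproduces the input admissibility \eqref{eq:L1} while the $D'_i$-diagonal block becomes \eqref{eq:M1a}, so that \eqref{eq:L1} is precisely what makes the two compatible.

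For (b), part (a) supplies a well-defined assignment $\Phi$; since its defining formulas are polynomial in the entries of $(A,B,\Gamma,\Delta)$, it is a morphism, and its inverse $\Psi$ --- which merely reads off $A_i=\mathbb{A}_i$, $B_i=\mathbb{B}_i$ and the four framing blocks --- is a morphism landing in $\Lambda_{n-k,k}$, precisely because the $V_i$-block of \eqref{eq:M2} is the small-quiver admissibility \eqref{eq:L1}. Bijectivity is then immediate from the uniqueness in (a). For $\GL(V)$-equivariance I would argue by uniqueness rather than by computation: embedding $\GL(V)=\prod_i\GL(V_i)$ into $\GL(\tV)$ via $g_i\mapsto g_i\oplus\id_{D'_i}$, the whole transversal system \eqref{eq:M1a}--\eqref{eq:M1b} is preserved because the $\fsl_2$-data $x_i,y_i$ live on $D'_i$, on which $\GL(V)$ acts trivially; hence $g\cdot\Phi(A,B,\Gamma,\Delta)$ is transversal with prescribed blocks $g\cdot(A,B,\Gamma,\Delta)$ and therefore equals $\Phi(g\cdot(A,B,\Gamma,\Delta))$. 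It remains to check that $\Phi$ carries $\Lambda^+_{n-k,k}$ onto $\fT^+_{n-k,k}$, i.e.\ that the stability \eqref{eq:L2} corresponds to \eqref{eq:M3} under the block identification; this is the only genuinely quiver-theoretic step, and I would carry it out by expressing the modified composites $\tGa_{j\to i}$ through the prescribed $A_i,B_i$ and framing blocks and comparing the two spans.

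For (c), the $\GL(V)$-equivariant isomorphism $\Phi\colon\Lambda^+_{n-k,k}\xrightarrow{\sim}\fT^+_{n-k,k}$ from (b) makes the composite $\Lambda^+_{n-k,k}\to\fT^+_{n-k,k}\hookrightarrow\Lambda^+(\td,\tv)\xrightarrow{p_{n-k,k}}M(\td,\tv)$ invariant under $\GL(V)$, which acts trivially on the $\GL(\tV)$-quotient $M(\td,\tv)$, so it descends to a surjection $\Phi_M\colon M_{n-k,k}\to p_{n-k,k}(\fT^+_{n-k,k})$. Injectivity of $\Phi_M$ is the assertion that two $\GL(\tV)$-equivalent transversal representatives are already $\GL(V)$-equivalent; this is the slice property of $\fT^+_{n-k,k}$ inside $\Lambda^+(\td,\tv)$ from \cite[Lemma~18]{Maf05} (and can also be read off directly from the explicit form produced in (a)), which combined with the injectivity of $\Phi$ shows $\Phi_M$ is a bijection. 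Finally, since $p_{n-k,k}$ realizes $\fT^+_{n-k,k}$ as a geometric quotient by $\GL(V)$ onto its image, the equivariant morphism $\Psi$ descends to an inverse of $\Phi_M$, so $\Phi_M$ is an isomorphism of varieties.
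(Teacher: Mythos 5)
The paper's own proof of this proposition is a one-line citation: it is declared to be a special case of \cite[Lemmas~18, 19]{Maf05}. You are instead attempting to reprove those lemmas in the two-row setting. The outer layers of your plan are structurally sound --- (b) via uniqueness-plus-equivariance and (c) via descent to quotients are exactly the formal content of Maffei's Lemma~19 --- but everything rests on part (a), and there your argument has a genuine gap. You claim that after imposing \eqref{eq:M1b} and the matching conditions \eqref{eq:uniq} the remaining unknowns are found by a ``finite, explicit back-substitution'' which ``collapses to the simple formulas \eqref{eq:uniq}'', because ``each $D'_i$ is small''. Both the mechanism and its justification are wrong. The spaces are not small: $\dim D'_i=n-2i$ for $i\le k-1$. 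More importantly, \eqref{eq:uniq} is a boundary condition, not a solution: after \eqref{eq:M1b} the undetermined blocks include all lower-triangular entries $\TT^{\phi,b}_{i,\psi,a}$, $\SS^{\phi,b}_{i,\psi,a}$ and the blocks $\TT^{\phi,1}_{i,V}$, $\SS^{V}_{i,\phi,a}$ at \emph{every} vertex $i$, and these are forced to be nonzero composites of the input data; already for $(n,k)=(3,1)$, equations \eqref{eq:M2} and \eqref{eq:M1b} force $\TT^{e,1}_{0,V}=B_1\Gamma_2$, a block about which \eqref{eq:uniq} says nothing. The closed form you implicitly have in mind is the paper's Lemma~\ref{lem:hell}, but that lemma is proved later \emph{using} the uniqueness asserted in this very proposition, so leaning on it here is circular; and in any case its simple form is admissible only when the compositions in \eqref{eq:T} vanish, which fails for general admissible input --- part (a) concerns all of $\Lambda_{n-k,k}$, with no stability assumed, and e.g.\ for $(n,k)=(3,1)$ the element $A_1=\Gamma_1=\Gamma_2=\Delta_2=1$, $B_1=\Delta_1=-1$ is admissible yet has $\Delta_2\Gamma_2=1\neq 0$, so its extension must carry further nonzero correction blocks. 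Determining those blocks means solving the coupled mixed $V$--$D'$ and $D'$--$D'$ equations of \eqref{eq:M2} compatibly with \eqref{eq:M1a}; that is precisely the inductive core of Maffei's Lemma~18, which uses the $\fsl_2$-operators $y_i$ of \eqref{def:sl2} to solve commutator equations weight by weight, and nothing in the two-row hypothesis removes the need for that machinery. Your consistency check is also misstated: \eqref{eq:M1a} is a commutation condition with $y_i$, not ``the $D'_i$-diagonal block of \eqref{eq:M2}'', and the genuinely delicate equations are the cross blocks, which your sketch never addresses.

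Two secondary points. In (c), your injectivity argument appeals to the slice property of $\fT^+_{n-k,k}$ ``from \cite[Lemma~18]{Maf05}'' (this is really the substance of Lemma~19); at that moment you are citing exactly what the paper cites, and your fallback --- reading injectivity off ``the explicit form produced in (a)'' --- presupposes an explicit solution that your part (a) never actually produces. Likewise, the stability comparison in (b) requires control of the composites $\tGa_{1\to i}$, i.e.\ of those same undetermined blocks. So, as written, the proposal either collapses back onto the citation the paper uses or leaves the central existence-and-uniqueness claim unproved. The two honest options are to carry out Maffei's induction in full in the two-row case (a substantial computation, not a back-substitution) or to do what the paper does and simply invoke \cite[Lemmas~18, 19]{Maf05}.
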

\proof
This is a special case of \cite[Lemmas~18, 19]{Maf05}.
\endproof
Now we are in a position to define Maffei's isomorphism $\tphi: M_{n-k,k} \overset{\sim}{\to} \tcS_{n-k,k}$.
Recall $\Lambda^+(-,-)$ from \eqref{def:L+}, $p_{d,v}$ from \eqref{def:p}, $M(-,-)$ from \eqref{def:M}, $\tphi(\td,\tv)$ from Proposition~\ref{prop:MS}, $\fT^+_{n-k,k}$ from \eqref{def:cT+}.
Finally, $\tphi_{n-k,k}$ is defined such that the lower right corner of the diagram below commute:
\eq\label{def:tphi}
\begin{tikzcd}
\Lambda^+(\td,\tv) 
\ar[r, "{\widetilde{p}~=~p_{\td,\tv}}", twoheadrightarrow] 
& 
M(\td,\tv) 
\ar[r, "{\tphi~=~\tphi(\td,\tv)}", "\sim"'] 
&
\tcS(\td,\tv) 
\\
\fT^+_{n-k,k}
\ar[r, twoheadrightarrow] \ar[u, hookrightarrow]
& 
\widetilde{p}(\fT^+_{n-k,k})
\ar[r, "\sim"'] \ar[u, hookrightarrow]
&
\tphi\circ\widetilde{p}(\fT^+_{n-k,k}).
\ar[u, hookrightarrow]
\\
\Lambda^+_{n-k,k}
\ar[r, "p_{n-k,k}", twoheadrightarrow]
\ar[u, "\Phi", "\sim"']
&
M_{n-k,k}
\ar[r, "\tphi_{n-k,k}", dashrightarrow, "\sim"']
\ar[u, "\Phi_M", "\sim"']
&
\tcS_{n-k,k}
\ar[u, equal]
\end{tikzcd}
\endeq




 

\begin{prop} \label{prop:MS2}
There map $\tphi_{n-k,k}: M_{n-k,k} \to \tcS_{n-k,k}$ defined above is an isomorphism of algebraic varieties.
\end{prop}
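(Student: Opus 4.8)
The plan is to read the statement directly off the commutative diagram \eqref{def:tphi}: by construction $\tphi_{n-k,k}$ is the composite
\[
M_{n-k,k} \xrightarrow{\Phi_M} \widetilde{p}(\fT^+_{n-k,k}) \xrightarrow{\tphi} \tphi\circ\widetilde{p}(\fT^+_{n-k,k}),
\]
so it suffices to prove that each arrow is an isomorphism and that the final object coincides with $\tcS_{n-k,k}$. The first two facts are essentially free: $\Phi_M$ is an isomorphism onto $\widetilde{p}(\fT^+_{n-k,k})$ by Proposition~\ref{prop:Phi}(c), while $\tphi=\tphi(\td,\tv)$ is a global isomorphism $M(\td,\tv)\xrightarrow{\sim}\tcS(\td,\tv)$ by Proposition~\ref{prop:MS} (which applies because the modified quiver of Figure~\ref{figure:tS} is framed only at the first node), hence restricts to an isomorphism of the subvariety $\widetilde{p}(\fT^+_{n-k,k})$ onto its image. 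The entire content therefore collapses to the identification
\[
\tphi\circ\widetilde{p}(\fT^+_{n-k,k}) = \tcS_{n-k,k}
\]
asserted by the right-hand ``equal'' arrow of \eqref{def:tphi}.

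To establish this identification I would first recall the explicit form of $\tphi(\td,\tv)$: the class of $(\tA,\tB,\tGa,\tDe)$ is sent to $(u,F_\bullet)$ with $u=\tDe_1\tGa_1$ acting on $\tD_1\cong\CC^n$ and $F_i=\ker\tGa_{1\to i}$. Since the framing has rank $n$ at a single node and $\dim\tV_i=n-i$, the flag $F_\bullet$ is complete and $\tcS(\td,\tv)$ is the full Springer resolution $\tcN=T^*\cB$, so that $\tcS_{n-k,k}=\mu^{-1}(\mathcal{S}_{n-k,k})$ sits naturally inside it. For the inclusion $\subseteq$, I would invoke the transversality condition \eqref{eq:M1a} in the case $i=0$: because $\tA_0=\tGa_1$, $\tB_0=\tDe_1$, $D'_0=\tD_1$, and $\pi_{D'_0}$ is the identity, this condition reads $[\tDe_1\tGa_1-x_0,\,y_0]=0$, where by \eqref{def:sl2} the element $x_0$ is precisely the fixed nilpotent of Jordan type $(n-k,k)$. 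Hence $u=\tDe_1\tGa_1\in\mathcal{S}_{n-k,k}$, so $(u,F_\bullet)\in\mu^{-1}(\mathcal{S}_{n-k,k})=\tcS_{n-k,k}$ and the image lies in $\tcS_{n-k,k}$.

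For the reverse inclusion $\supseteq$ I would argue surjectivity. Given any $(u,F_\bullet)\in\tcS_{n-k,k}\subseteq\tcN$, the global isomorphism $\tphi(\td,\tv)$ produces a unique class in $M(\td,\tv)$ lying over it; what must be shown is that this class admits a \emph{transversal}, stable, admissible representative, i.e.\ that it lies in $\widetilde{p}(\fT^+_{n-k,k})$. This is exactly the solvability statement packaged in Proposition~\ref{prop:Phi}: the isomorphism $\Phi:\Lambda^+_{n-k,k}\xrightarrow{\sim}\fT^+_{n-k,k}$ shows that $\fT^+_{n-k,k}$ is parametrized by the stable admissible representations of the original quiver, and that the remaining transversality equations \eqref{eq:M1a} (for $i\geq 1$) and \eqref{eq:M1b} can always be solved once $u\in\mathcal{S}_{n-k,k}$. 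Combining the two inclusions yields the desired equality, whence $\tphi_{n-k,k}$ is a composite of isomorphisms and is itself an isomorphism of algebraic varieties.

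The main obstacle is precisely the reverse inclusion: one must be certain that the auxiliary transversality conditions do not cut the image strictly below $\tcS_{n-k,k}$, equivalently that every $u\in\mathcal{S}_{n-k,k}$ carrying a compatible complete flag is realized by a genuinely transversal representation. I expect to dispatch this by leaning entirely on Proposition~\ref{prop:Phi} (Maffei's Lemmas~18--19), which guarantees that the intricate system \eqref{eq:M1a}--\eqref{eq:M1b} has a unique solution extending any stable admissible datum of the original quiver; the substantive step that ties that abstract solvability to the specific slice $\mathcal{S}_{n-k,k}$ is the identification of $x_0$ with the $(n-k,k)$ nilpotent carried out above.
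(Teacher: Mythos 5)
Your decomposition of $\tphi_{n-k,k}$ via \eqref{def:tphi}, and your forward inclusion, are fine: Proposition~\ref{prop:Phi}(c) gives the isomorphism $\Phi_M$, Proposition~\ref{prop:MS} gives the global isomorphism $\tphi(\td,\tv)$ (your observation that $\tv=(n-1,\ldots,1)$, $\td=(n,0,\ldots,0)$, so the target is all of $\tcN$, is correct), and the transversality condition \eqref{eq:M1a} at $i=0$ — which reads $[\tDe_1\tGa_1-x_0,y_0]=0$ with $x_0$ the fixed $(n-k,k)$ nilpotent by \eqref{def:sl2} — does place the image inside $\mu^{-1}(\cS_{n-k,k})=\tcS_{n-k,k}$. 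The gap is the reverse inclusion, and your appeal to Proposition~\ref{prop:Phi} there does not work. Proposition~\ref{prop:Phi}(a) solves the system \eqref{eq:M1a}--\eqref{eq:M1b} starting from a stable admissible datum $(A,B,\Gamma,\Delta)\in\Lambda^+_{n-k,k}$ of the \emph{small} quiver; consequently all it yields is the tautology that the image of $\tphi_{n-k,k}$ equals $\tphi\circ\widetilde{p}(\fT^+_{n-k,k})$. Surjectivity onto $\tcS_{n-k,k}$ requires the opposite direction: given $(u,F_\bullet)\in\tcS_{n-k,k}$, one must show that the single $\GL(\tV)$-orbit $\widetilde{p}\,\tphi\inv(u,F_\bullet)$ of stable admissible \emph{modified}-quiver data contains a transversal element, i.e.\ that imposing \eqref{eq:M1a}--\eqref{eq:M1b} inside a prescribed orbit is solvable precisely when $u$ lies in the slice. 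Nothing in Propositions~\ref{prop:Phi} or \ref{prop:MS} gives this; it is the hard content of Maffei's theorem (one needs either an explicit construction of a transversal representative from $(u,F_\bullet)$, or an auxiliary argument such as closedness of the image plus an irreducibility/dimension count, neither of which you supply).

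For comparison, the paper does not attempt this reconstruction at all: its proof of Proposition~\ref{prop:MS2} is a one-line citation of \cite[Theorem~8]{Maf05}, specialized to $N=n$, $r=(1,\ldots,1)$, and $x$ of Jordan type $(n-k,k)$; the equality $\tphi\circ\widetilde{p}(\fT^+_{n-k,k})=\tcS_{n-k,k}$ asserted by the right-hand column of \eqref{def:tphi} is exactly what that theorem provides. If you want a self-contained proof you would need to import the surjectivity argument from Maffei's paper rather than derive it from the two propositions quoted here.
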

 \proof
 This is a special case of \cite[Thm. 8]{Maf05} by setting $N = n$, $r = (1, \ldots, 1)$, and $x\in\cN$ of Jordan type $(n-k,k)$.  
 \endproof


\section{Components of Springer fibers of type A} \label{sec:(n-k,k)}
For each cup $a\in B_{n-k,k}$, our strategy to single out the irreducible component $K^a \subset \cB_x \subset \tcS_{n-k,k}$ requires the following ingredients:
\begin{enumerate}
\item construction of a subset $\fT^a \subset \fT^+_{n-k,k}$ such that $\widetilde{p}(\fT^a) \simeq K^a$.
\item construction of a subset $\Lambda^a \subset \Lambda^+_{n-k,k}$ so that $\Phi(\Lambda^a) \simeq \fT^a$, which implies
$\Phi_M (p(\Lambda^a)) \simeq \widetilde{p}(\fT^a) \simeq K^a$.
\end{enumerate}
In light of \eqref{def:tphi}, we have
\eq\label{def:tphi2}
\begin{tikzcd}
\Lambda^+(\td,\tv) 
\ar[r, "{\widetilde{p}}", twoheadrightarrow] 
& 
M(\td,\tv) 
\ar[r, "{\tphi}", "\sim"'] 
&
\tcS(\td,\tv) 
\\
\fT^a
\ar[r, twoheadrightarrow] \ar[u, hookrightarrow]
& 
\widetilde{p}(\fT^a)
\ar[r, "\sim"', "\textup{Prop. }\ref{prop:a1}"] \ar[u, hookrightarrow]
&
K^a.
\ar[u, hookrightarrow]
\\
\Lambda^a
\ar[r, "p", twoheadrightarrow]
\ar[u, "\sim"', "\textup{Prop. }\ref{prop:a2}"]
&
p(\Lambda^a)
\ar[u, "\Phi_M", "\sim"']
&
\end{tikzcd}
\endeq
Recall that a cup diagram is uniquely determined by the configuration of its cups; that is, once the placement of the cups has been decided, rays emanate from the rest of the nodes.
Hence, for our construction of $\fT^a$ and $\Lambda^a$, we use only the information about the cups.
For completeness we also give a characterization for the ray relation on the quiver representation side.

\subsection{Irreducible components via quiver representations} 
Given a cup diagram $a = \{\{i_t, j_t\}\}_t\in B_{n-k,k}$,
we assume that $i_t < j_t$ for all $t$ and then denote the set of all vertices connected to the left (resp.\ right) endpoint of a cup in $a$ by
\eq
V_l^a = \{i_t \mid 1\leq t \leq k\},
\quad
V_r^a = \{j_t \mid 1\leq t \leq k\}.
\endeq
Define the endpoint-swapping map by
\eq
\sigma: V_l^a\to V_r^a,
\quad
i_t \mapsto j_t.
\endeq
Given $i\in V_l^a$, denote the ``size'' of the cup $\{i, \sigma(i)\}$ by
\eq
\delta(i)=\frac{1}{2}(\sigma(i)-i+1). 
\endeq
For instance, a minimal cup connecting neighboring vertices has size $\delta(i) = \frac{(i+1) -i + 1}{2} = 1$, and a cup containing a single minimal cup nested inside has size $2$.      

For short we set, for $0\leq p<q \leq n$,
\eq\label{eq:to}
\tB_{q\to p}
=
\tB_{p}\tB_{p+1}\cdots \tB_{q-1}: \tV_q \to \tV_p
,
\quad
\tA_{p \to q}
=
\tA_{q-1}\tA_{q-2}\cdots \tA_{p}: \tV_p \to \tV_q.
\endeq
Now we define
\eq\label{defi:M_n^a}
\fT^a
=\{(\tA,\tB,\tGa,\tDe)\in \fT^+_{n-k,k}
\mid
\ker \tB_{i+\delta(i)-1 \to i-1} 
= \ker \tA_{\sigma(i)-\delta(i) \to \sigma(i)} 
\tforall i\in V_l^a
\}.
\endeq 
The kernel condition in \eqref{defi:M_n^a} can be visualized in Figure~\ref{fig:Ma} below:

\begin{figure}[ht!]
\caption{The paths in the kernel condition of \eqref{defi:M_n^a}.}
\label{fig:Ma}
\[ 
\xymatrix@C=25pt@R=9pt{
	&   
	& 
	&
	\ar@{=}[d]
{\tV_{\sigma(i)-\delta(i)}}  
	\ar@/^/[r]^{\tA_{\sigma(i)-\delta(i)}}
	& 
{\tV_{i+\delta(i)}}  
	\ar@/^/[r]^{\tA_{\sigma(i)-\delta(i)+1}}
	&
	\cdots
   \ar@/^/[r]^{\tA_{\sigma(i)-1}}
	& 
{\tV_{\sigma(i)}.} 
\\
{\tV_{i-1}}  
	&   
	\ar@/^/[l]^{\tB_{i-1}}
{\tV_{i}}  
	& 
	\ar@/^/[l]^{\tB_{i}}
	\cdots
	&
	\ar@/^/[l]^{\tB_{i+\delta(i)-2}}
{\tV_{i+\delta(i)-1}}  
	& 
	&
	& 
}
\] 
\end{figure}

Note that for a minimal cup connecting neighboring vertices $i$ and $i+1$ the relations in \eqref{defi:M_n^a} take the simple form 
\eq
\begin{split}
\ker \tDe_1 = \ker \tA_1 
&\quad\tif i =1;
\\
\ker \tB_{i-1} = \ker \tA_i 
&\quad\tif 2 \leq i \leq n-2;
\\
\ker \tB_{n-2} = \tV_{n-1} = \CC 
&\quad\tif i=n-1.
\end{split}
\endeq 
\begin{prop}\label{prop:a1}
For $a \in B_{n-k,k}$,
we have an equality
$
\widetilde{p}(\fT^a) =  \tphi\inv(K^a).
$
\end{prop}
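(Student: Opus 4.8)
The plan is to move the whole statement to the Slodowy side via the explicit form of Maffei's isomorphism and to compare the two subsets of $\tcS_{n-k,k}$ directly. Since the only nonzero framing of the modified quiver is $\tD_1$, of dimension $n$, Proposition~\ref{prop:MS} applies and shows that the composite $\tphi\circ\widetilde{p}$ sends a representation $(\tA,\tB,\tGa,\tDe)\in\fT^+_{n-k,k}$ to the pair $(u,F_\bullet)$ with $u=\tDe_1\tGa_1$ and $F_\ell=\ker\tGa_{1\to\ell}$. By construction (diagram~\eqref{def:tphi}) the map $\tphi$ restricts to an isomorphism $\widetilde{p}(\fT^+_{n-k,k})\overset{\sim}{\to}\tcS_{n-k,k}$, so, as $\widetilde{p}(\fT^a)$ and $\tphi\inv(K^a)$ both sit inside $\widetilde{p}(\fT^+_{n-k,k})$, it is equivalent to prove the equality of images $\tphi(\widetilde{p}(\fT^a))=K^a$ inside $\tcS_{n-k,k}$. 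I will first match the defining kernel conditions of $\fT^a$ with the cup relations~\eqref{eq:cup_rel}, and then identify the resulting locus with $K^a$.

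First I would record two intertwining identities coming from admissibility~\eqref{eq:M2}. At level $1$ it reads $\tB_1\tA_1=\tGa_1\tDe_1$ (the term $\tA_0\tB_0$ vanishes as $\tV_0=0$), while at levels $\ell\geq2$ it reads $\tA_{\ell-1}\tB_{\ell-1}=\tB_\ell\tA_\ell$. Substituting the first into $\tGa_{1\to\ell}\,u=\tA_{1\to\ell}\tGa_1\tDe_1\tGa_1$ and repeatedly pushing the resulting factor to the left with the second yields
\[
\tGa_{1\to\ell}\,u=\tB_\ell\,\tGa_{1\to\ell+1},\qquad \tGa_{1\to\ell+1}=\tA_\ell\,\tGa_{1\to\ell},
\]
and iterating $d$ times gives $\tGa_{1\to\ell}\,u^{d}=\tB_{\ell+d\to\ell}\,\tGa_{1\to\ell+d}$ and $\tGa_{1\to\ell+d}=\tA_{\ell\to\ell+d}\,\tGa_{1\to\ell}$. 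Next I would invoke stability~\eqref{eq:M3}: since $\tD_j=0$ for $j\neq1$, the stability sum collapses to $\Im\tA_{i-1}=\tV_i$ for $i\geq2$ and $\Im\tGa_1=\tV_1$, so every $\tA_j$ and $\tGa_1$ is surjective, whence each $\tGa_{1\to\ell}=\tA_{\ell-1}\cdots\tA_1\tGa_1$ is surjective.

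Now fix a cup $\{i,\sigma(i)\}$ of $a$ and set $m=i+\delta(i)-1=\sigma(i)-\delta(i)$. Applying the iterated identities with $d=\delta(i)$ and using $F_\ell=\ker\tGa_{1\to\ell}$ gives
\[
u^{-\delta(i)}F_{i-1}=\ker\bigl(\tGa_{1\to i-1}\,u^{\delta(i)}\bigr)=\ker\bigl(\tB_{m\to i-1}\,\tGa_{1\to m}\bigr),\qquad F_{\sigma(i)}=\ker\bigl(\tA_{m\to\sigma(i)}\,\tGa_{1\to m}\bigr).
\]
Because $\tGa_{1\to m}$ is surjective, the preimage operation $\tGa_{1\to m}^{-1}$ is injective on subspaces, so these two subspaces of $\CC^n$ coincide if and only if $\ker\tB_{m\to i-1}=\ker\tA_{m\to\sigma(i)}$ in $\tV_m$, which is exactly the condition defining $\fT^a$ in~\eqref{defi:M_n^a} (the boundary cases $m=0$ and $\sigma(i)=n$, where $\tB_0=\tDe_1$ or $\tA_{n-1}=0$, are routine). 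Hence a point of $\fT^+_{n-k,k}$ lies in $\fT^a$ if and only if its image satisfies $F_{\sigma(i)}=u^{-\delta(i)}F_{i-1}$ for every cup of $a$, so that $\tphi(\widetilde{p}(\fT^a))$ is precisely the locus in $\tcS_{n-k,k}$ cut out by all the cup relations of $a$.

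It remains to identify this cup-relation locus with $K^a$. The inclusion $K^a\subseteq\tphi(\widetilde{p}(\fT^a))$ is immediate from Proposition~\ref{prop:known_results_about_irred_comp}(b), since every flag in $K^a$ satisfies the cup relations while $u=x\in\cS_x$. For the reverse inclusion I would show that the cup relations, combined with the conditions $\dim F_\ell=\ell$ and $u(F_\ell)\subseteq F_{\ell-1}$ valid throughout $\tcS_{n-k,k}$, force $u$ to attain the maximal kernel dimensions along the flag, hence to have Jordan type $(n-k,k)$; as the Slodowy slice meets the orbit of $x$ only in $\{x\}$, this yields $u=x$, and the same dimension bookkeeping recovers the ray relations~\eqref{eq:ray_rel}, placing the point in $K^a$ by Proposition~\ref{prop:known_results_about_irred_comp}(b). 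I expect this final step to be the main obstacle: it is precisely the place where the geometry of the two-row Springer fiber---namely that a cup diagram is determined by its cups, going back to Fung and Stroppel--Webster---must be used, rather than the purely formal quiver manipulations of the earlier steps.
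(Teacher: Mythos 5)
Your core argument is the paper's proof almost line for line: reduce via Proposition~\ref{prop:MS} to comparing images inside $\tcS_{n-k,k}$; use admissibility \eqref{eq:M2} to get $\tGa_{1\to \ell}(\tDe_1\tGa_1)^{\delta}=\tB_{\ell+\delta\to\ell}\,\tGa_{1\to\ell+\delta}$, so that the flag-side cup relation \eqref{eq:equivalent_condition} becomes $\ker\bigl(\tB_{m\to i-1}\tGa_{1\to m}\bigr)=\ker\bigl(\tA_{m\to\sigma(i)}\tGa_{1\to m}\bigr)$ with $m=i+\delta(i)-1$; then use stability \eqref{eq:M3} to see that $\tGa_{1\to m}$ is surjective, so this holds if and only if $\ker\tB_{m\to i-1}=\ker\tA_{m\to\sigma(i)}$, which is the defining condition \eqref{defi:M_n^a} of $\fT^a$. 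Where you and the paper part ways is the endgame. The paper stops exactly there: it treats the cup relations as \emph{being} the Fung/Stroppel--Webster description of $K^a$, citing Proposition~\ref{prop:known_results_about_irred_comp}(b)(i), and declares the proposition proved. The step you single out as the ``main obstacle'' --- that inside the full Slodowy variety the cup relations, stated with $u=\tDe_1\tGa_1$ in place of $x$, force $u=x$ and the ray relations \eqref{eq:ray_rel}, hence membership in $K^a$ --- is absorbed by the paper into that citation and is never argued. Your caution is not misplaced: Proposition~\ref{prop:known_results_about_irred_comp}(b) as literally stated characterizes $K^a$ inside the Springer fiber (where $u=x$ is given) and lists the ray relations as part of the characterization, so the paper's ``it suffices'' quietly uses the stronger fact that the cup relations alone cut out $K^a$ in $\tcS_{n-k,k}$. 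Your sketched remedy is the right one and does go through for two-row nilpotents: each cup relation forces every $u^{-1}$-step to raise dimension by exactly two, which pins the kernel dimensions of the powers of $u$ to those of Jordan type $(n-k,k)$; since $u\in\cS_x$ can only have type $(n-k',k')$ with $k'\le k$, the type is exactly $(n-k,k)$, and $\cS_x\cap\cO_x=\{x\}$ then gives $u=x$, after which the remaining flag spaces, and thus the ray relations, are determined. So nothing in your proposal is wrong, and its completed portion coincides with the paper's entire written proof; to make the argument self-contained you would only need to write out the forcing argument you sketch, whereas the paper instead disposes of it by appeal to the cited works.
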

\proof
Thanks to Proposition~\ref{prop:MS}, it suffices to show that, for any $i\in V_l^a$ and
$(\tA, \tB, \tGa, \tDe) \in \fT^a$,
the kernel condition 
\eq\label{eq:cup_condition_quiver_side_rewritten} 
\ker \tB_{i+\delta(i)-1 \to i-1} 
= \ker \tA_{\sigma(i)-\delta(i) \to \sigma(i)}
\endeq 
is equivalent to the Fung/Stroppel--Webster cup relation (see Proposition~\ref{prop:known_results_about_irred_comp}~(b)(i))
\begin{equation}\label{eq:equivalent_condition}
(\tDe_1\tGa_1)^{-\delta(i)}
\ker \tGa_{1\to i-1}
=\ker \tGa_{1 \to \sigma(i)}.
\end{equation}
Note that the left-hand side of \eqref{eq:equivalent_condition} can be rewritten as follows:
\begin{align*}
(\tDe_1\tGa_1)^{-\delta(i)}
(\ker \tGa_{1 \to i-1}) 
	&= \ker (\tGa_{1 \to i-1} (\tDe_1\tGa_1)^{\delta(i)}) 
	\\
	&= \ker (\tA_{1 \to i-1} (\tB_1\tA_1)^{\delta(i)}  \tGa_1)	
	\\
	&= \ker (\tB_{i+\delta(i)-1 \to i-1}\tGa_{1\to i+\delta(i)-1}),
\end{align*}
where the second equality follows from applying $\delta(i)$ times the admissibility condition $\tB_1\tA_1=\tGa_1\tDe_1$;
while the third equality follows from applying
the admissibility condition $\tA_{t-1}\tB_{t-1} = \tB_t\tA_t$ repeatedly from $t=2$ to $t=i+\delta(i)-1 = \sigma(i)-\delta(i)$. 
Therefore, the cup relation 
\eqref{eq:equivalent_condition}
is equivalent to another kernel condition below
\eq\label{eq:final_equivalent_condition}
\ker \tB_{i+\delta(i)-1 \to i-1}\tGa_{1\to i+\delta(i)-1}
=\ker \tGa_{1 \to \sigma(i)}.
\endeq
In particular, the kernels are equal for the two maps in Figure~\ref{fig:cupeq} given by dashed and solid arrows, respectively:

\begin{figure}[ht!]
\caption{The kernel condition that is equivalent to \eqref{eq:equivalent_condition}.}
\label{fig:cupeq}
\[ 
\xymatrix@C=25pt@R=25pt{
{\tD_{i-1}}  
	\ar@/^/[d]^{\tGa_1}
	\ar@/_/@{.>}[d]_{\tGa_1}
\\
{\tV_{1}}
	\ar@/^/[r]^{\tA_{1}}
	\ar@/_/@{.>}[r]_{\tA_{1}}  
&
{\tV_{2}}
	\ar@/^/[r]^{\tA_{2}}
	\ar@/_/@{.>}[r]_{\tA_{2}}  
&
\dots  
	\ar@/^/[r]
	\ar@/_/@{.>}[r]  
	&
	\ar@{=}[d]
{\tV_{\sigma(i)-\delta(i)}}  
	\ar@/^/[r]^{\tA_{\sigma(i)-\delta(i)}}
	& 
	\cdots
   \ar@/^/[r]^{\tA_{\sigma(i)-1}}
	& 
{\tV_{\sigma(i)}} 
\\
&
{\tV_{i-1}}  
	&
	\ar@/^/@{.>}[l]^{\tB_{i}}   
	\cdots
	&
	\ar@/^/@{.>}[l]^{\tB_{i+\delta(i)-2}}
{\tV_{i+\delta(i)-1}}  
	& 
	& 
}
\] 
\end{figure}

Note that~\eqref{eq:cup_condition_quiver_side_rewritten} evidently implies~\eqref{eq:final_equivalent_condition}. 
By the stability conditions on $\tV_1, \ldots, \tV_{i+\delta(i) -1}$ we see that the maps $\tGa_1, \tA_1, \ldots, \tA_{i+\delta(i)-2}$ are all surjective, and so is its composition $\tGa_{1\to i+\delta(i)-1}$. 
Thus, \eqref{eq:final_equivalent_condition} implies~\eqref{eq:cup_condition_quiver_side_rewritten}, and we are done.
\endproof
\subsection{Springer fibers via quiver representations} \label{sec:hell}

For $1\leq i \leq n-1$, we define an isomorphic copy of $D'_i$ (see \eqref{def:D'}) with a shift of index by $t$
as
\eq
D'_i[t] = \<f_{i+t}\mid f_i \in D'_i\>  \oplus \< e_{i+t} \mid e_i \in D'_i\>.
\endeq
By a slight abuse of notation, we denote by $\Gamma_{\to i}$ the assignment given by
\eq
e_a \mapsto \Gamma_{n-k\to i}(e) \in V_i,
\quad
f_b \mapsto \Gamma_{k\to i}(f) \in V_i.
\endeq
We define the obvious composition by
\eq\label{def:Deij}
\Delta_{j\to i} = 
\begin{cases} 
\Delta_i B_i  \cdots B_{j-1}  &\tif j \geq i, 
\\
\Delta_i A_{i-1} \cdots A_j  &\tif j \leq i,
\end{cases}
\endeq
and then define $\Delta_{i\to }$ similarly.
\begin{lemma}\label{lem:hell}
If $(\tA, \tB, \tGa, \tDe) \in \fT^+_{n-k,k}$ and $\Phi\inv(\tA, \tB, \tGa, \tDe) = (A,B,\Gamma, \Delta) \in \Lambda^+_{n-k,k}$, then $(\tA, \tB, \tGa, \tDe)$ must be of the form, for $1\leq i \leq n-2$: 
\eq
\tGa_1
=
\begin{blockarray}{ *{8}{c} }
&  D'_0 \setminus D'_1[1] & D'_1[1] \\
\begin{block}{ c @{\quad} ( @{\,} *{7}{c} @{\,} )}
V_1& \Gamma_{\to1}&0\\
D'_1 & 0& I_{n-2} & \\
\end{block}
\end{blockarray}
\normalsize
~~~~~, 
\quad 
\tDe_1=
\small 
\begin{blockarray}{ *{8}{c} }
& V_1 &  D'_1\\
\begin{block}{ c @{\quad} ( @{\,} *{7}{c} @{\,} ) }
D'_1 & 0 & I_{n-2} \\
D'_0 \setminus D'_1& \Delta_{1\to} &   0\\
\end{block}
\end{blockarray}
\normalsize 
~~~~~,
\endeq
\eq
\tA_i
=
\begin{blockarray}{ *{8}{c} }
& V_i & D'_{i} \setminus D'_{i+1}[1] & D'_{i+1}[1] \\
\begin{block}{ c @{\quad} ( @{\,} *{7}{c} @{\,} )}
V_{i+1}& {A}_i&\Gamma_{\to i+1} & 0\\
D'_{i+1} & 0 &0 &I   \\
\end{block}
\end{blockarray}
\normalsize
~~~~~,
\quad
\tB_i=
\small 
\begin{blockarray}{ *{8}{c} }
& V_{i+1} &   D'_{i+1}\\
\begin{block}{ c @{\quad} ( @{\,} *{7}{c} @{\,} ) }
V_i & {B}_i & 0 \\
D'_{i+1}& 0 &   I   \\
D'_i \setminus D'_{i+1} & \Delta_{i+1 \to} &0\\
\end{block}
\end{blockarray}
\normalsize 
~~~~~.
\endeq
Moreover, the following equation hold, for $2\leq i \leq n-1$,
\eq\label{eq:T}
\Delta_{i \to } \Gamma_{\to i} = 0.
\endeq
\end{lemma}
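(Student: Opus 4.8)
\emph{Overall strategy.} The statement has two parts: the explicit block shapes of $\tGa_1,\tDe_1,\tA_i,\tB_i$, and the vanishing \eqref{eq:T}. The plan is to obtain the shapes by combining the uniqueness in Proposition~\ref{prop:Phi} (equivalently \cite[Lemmas~18,~19]{Maf05}) with the transversality equations \eqref{eq:M1b}, and then to deduce \eqref{eq:T} from a single matrix block of the admissibility relation \eqref{eq:M2}. Throughout I would fix names for the four relevant pieces of each $D'$-space: $D'_i$ itself, its shift $D'_i[1]$, the ``top'' complement $D'_i\setminus D'_{i+1}=\langle e_{n-k-i},f_{k-i}\rangle$, and the ``bottom'' complement $D'_i\setminus D'_{i+1}[1]=\langle e_1,f_1\rangle$ (only $\langle e_1\rangle$ once the $f$-strand has run out), since the entire argument is a careful accounting of how these sit inside one another via the shift identifications that appear as the blocks $I$.

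\emph{Block shapes.} First I would note that the clauses of \eqref{eq:M1b} already force almost every entry: the ``$=\mathrm{id}$'' clauses produce the $I$-blocks on the $D'_{i+1}[1]\to D'_{i+1}$ and $D'_{i+1}\to D'_{i+1}$ slots, the clauses $\TT^{V}_{i,j,a}=\SS^{j,b}_{i,V}=0$ kill the remaining $V_i\to D'$ and $D'\to V_i$ interactions, and the clauses $\TT^{j,b}_{i,V}=0$ for $b\neq1$ and $\SS^{V}_{i,j,a}=0$ for $a\neq j-i$ leave exactly one surviving block $G_i\colon D'_i\setminus D'_{i+1}[1]\to V_{i+1}$ in $\tA_i$ and one surviving block $E_i\colon V_{i+1}\to D'_i\setminus D'_{i+1}$ in $\tB_i$. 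Together with $\mathbb A_i=A_i$, $\mathbb B_i=B_i$ this gives the displayed matrices with $G_i,E_i$ in the slots occupied by $\Gamma_{\to i+1},\Delta_{i+1\to}$. To identify these, I would feed the shapes into $\tB_i\tA_i=\tA_{i-1}\tB_{i-1}$ (valid for $i\geq2$, since $\tGa_i=0$): restricting the $V_i$-component of both sides to $\langle e_1,f_1\rangle\subset D'_i$ yields $B_iG_i=G_{i-1}$, i.e.\ $\Gamma_{\to i}=B_i\Gamma_{\to i+1}$, and an analogous computation on the $V\to D'$ blocks yields the matching recursion for the $E_i$. Combined with the base values $G_{n-k-1}=\Gamma_{n-k}$, $G_{k-1}=\Gamma_k$, $E_{n-k-1}=\Delta_{n-k}$, $E_{k-1}=\Delta_k$ supplied by \eqref{eq:uniq}, together with the $A$-side recursions on the right half of the quiver, these exactly reproduce the iterated maps \eqref{def:Gaij} and \eqref{def:Deij}, giving $G_i=\Gamma_{\to i+1}$ and $E_i=\Delta_{i+1\to}$.

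\emph{The vanishing \eqref{eq:T}.} For $2\leq i\leq n-1$, I would read off the $D'_{i-1}\to D'_{i-1}$ block of the admissibility relation at index $i-1$, namely $\tB_{i-1}\tA_{i-1}=\tA_{i-2}\tB_{i-2}+\tGa_{i-1}\tDe_{i-1}$, where $\tGa_{i-1}=0$ for $i\geq3$, and where $\tA_{i-2}\tB_{i-2}$ is replaced by $\tGa_1\tDe_1$ in the boundary case $i=2$ (using $\tA_0=0$ in \eqref{eq:M2}). Restricting the source to $\langle e_1,f_1\rangle=D'_{i-1}\setminus D'_i[1]$ and projecting the target onto $D'_{i-1}$, the left-hand side contributes precisely $E_{i-1}G_{i-1}=\Delta_{i\to}\Gamma_{\to i}$, since $\tA_{i-1}$ sends $e_1,f_1$ into $V_i$ through $\Gamma_{\to i}$ and $\tB_{i-1}$ returns $V_i$ to $D'_{i-1}\setminus D'_i$ through $\Delta_{i\to}$, with no competing $D'$-contribution. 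On the right-hand side, $\tB_{i-2}$ (resp.\ $\tDe_1$) carries $\langle e_1,f_1\rangle$ identically into $D'_{i-1}\subset D'_{i-2}$, where it lands in the bottom slot $D'_{i-2}\setminus D'_{i-1}[1]$; the following $\tA_{i-2}$ (resp.\ $\tGa_1$) then maps this copy of $\langle e_1,f_1\rangle$ into $V_{i-1}$ through $\Gamma_{\to i-1}$ and contributes $0$ to the projection onto $D'_{i-1}$. Equating the two sides gives $\Delta_{i\to}\Gamma_{\to i}=0$, which is exactly \eqref{eq:T}.

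\emph{Main obstacle.} The delicate point throughout is the bookkeeping of the nested and shifted copies $D'_i,\,D'_i[1],\,D'_i\setminus D'_{i+1},\,D'_i\setminus D'_{i+1}[1]$ and the shift maps $I$ relating them; the vanishing in the last step rests entirely on the fact that the bottom vectors $\langle e_1,f_1\rangle$ carried by one $\tB$-block re-enter the next $\tA$-block through the slot that maps to $V$ and therefore disappear under $\pi_{D'}$. I would also want to double-check the $\fsl_2$-compatibility making Proposition~\ref{prop:Phi} applicable, i.e.\ that the shift part of $\pi_{D'_i}\tB_i\tA_i|_{D'_i}$ matches $x_i$, which is what isolates the two genuine blocks $G_i,E_i$ from the forced identity blocks.
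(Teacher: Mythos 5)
Your proposal takes a genuinely different route from the paper, and the difference is exactly where it fails. The paper never tries to \emph{derive} the block shapes from the conditions defining $\fT^+_{n-k,k}$: it writes the displayed matrices down as a candidate, computes the products \eqref{eq:tDG}--\eqref{eq:tBtA}, verifies that this candidate satisfies \eqref{eq:M1a}, \eqref{eq:M1b}, \eqref{eq:M2}, \eqref{eq:M3} and the normalization \eqref{eq:uniq} --- using the admissibility and stability of $(A,B,\Gamma,\Delta)$ \emph{together with} \eqref{eq:T} as inputs --- and then concludes from the uniqueness in Proposition~\ref{prop:Phi}(a) that the candidate coincides with $\Phi(A,B,\Gamma,\Delta)$. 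Your first step instead asserts that \eqref{eq:M1b} already forces the shapes, leaving ``exactly one surviving block'' in each of $\tA_i$ and $\tB_i$. That is false: \eqref{eq:M1b} constrains only the blocks with $b>a+1$, $b=a+1$, $b>a$, $b=a$, etc.; all same-family lower-triangular blocks $\TT^{f,b}_{i,f,a},\TT^{e,b}_{i,e,a}$ with $b\leq a$ and $\SS^{f,b}_{i,f,a},\SS^{e,b}_{i,e,a}$ with $b<a$ (and the mixed blocks below the stated thresholds) are left completely free. These free blocks are precisely the unknowns of \cite[Lemma~18]{Maf05} that the paper describes as having ``very involved'' solutions in general; proving that they vanish here is the entire content of Lemma~\ref{lem:hell}, not a consequence of \eqref{eq:M1b}.

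This gap also cannot be repaired by adding \eqref{eq:M1a} and \eqref{eq:M2} to your step 1, because the vanishing of the free blocks is not a formal consequence of membership in $\fT^+_{n-k,k}$. Already for $(n,k)=(3,1)$, write $t=\TT^{e,1}_{0,e,1}$ and $s=\SS^{e,1}_{0,e,2}$ for the two free blocks of $\tGa_1,\tDe_1$: the admissibility \eqref{eq:M2} at $i=1$ (with $\tA_0\tB_0=0$ as in Remark~\ref{rmk:0}) gives $t+s=\Delta_2\Gamma_2$, while \eqref{eq:M1a} at $i=0$ gives $t=s$, so $t=s=\tfrac12\Delta_2\Gamma_2$; and $\Delta_2\Gamma_2$ need not vanish on $\Lambda^+_{2,1}$ (e.g.\ $A_1=B_1=\Gamma_1=\Delta_1=\Gamma_2=1$, $\Delta_2=-1$ is admissible and stable). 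So the shapes and \eqref{eq:T} are interlocked: the relation \eqref{eq:T} must enter as an input exactly where the paper uses it (in checking \eqref{eq:M2} for the candidate, before invoking uniqueness), and your ordering --- shapes first from \eqref{eq:M1b}, then \eqref{eq:T} from admissibility at the end --- cannot be carried out. Your step 2 in isolation is a correct computation (it is the paper's comparison of \eqref{eq:tAtB} with \eqref{eq:tBtA}, read in the converse direction), but note that it only produces the components of $\Delta_{i\to}\Gamma_{\to i}$ visible inside $D'_{i-1}$; for $i>k$ the $f$-components never appear in any block, so even granting the shapes it yields less than the full statement \eqref{eq:T}.
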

\proof
By Proposition~\ref{prop:Phi} we know that $(\tA,\tB,\tGa,\tDe)$ must be the unique element in $\fT^+_{n-k,k}$ that satisfies \eqref{eq:uniq}, which indeed are satisfied from the construction.
What remains to show is that the formulas above do define an element in $\fT^+_{n-k,k}$.

From construction, we have
\eq\label{eq:tDG}
\tGa_1 \tDe_1=
\small 
\begin{blockarray}{ *{8}{c} }
& V_{1} & D'_1\setminus D'_2[1] & D'_{2}[1]\\
\begin{block}{ c @{\quad} ( @{\,} *{7}{c} @{\,} ) }
V_1 & 0 & \Gamma_{\to1} & 0 
\\
D'_2& 0 & 0&  I   \\
D'_1 \setminus D'_2 &0 &0&0\\
\end{block}
\end{blockarray}
\normalsize 
~~~~,
\quad
\tDe_1 \tGa_1=
\small 
\begin{blockarray}{ *{8}{c} }
&  D'_0 \setminus D'_1[1]& D'_{1}[1]\\
\begin{block}{ c @{\quad} ( @{\,} *{7}{c} @{\,} ) }
D'_1& 0&  I   \\
D'_0 \setminus D'_1& \Delta_{1\to}\Gamma_{\to 1} &0\\
\end{block}
\end{blockarray}
\normalsize 
~~~~~,
\endeq
\eq\label{eq:tAtB}
\tA_i \tB_i=
\small 
\begin{blockarray}{ *{8}{c} }
& V_{i+1} & D'_{i+1} \setminus D'_{i+2}[1]& D'_{i+2}[1]\\
\begin{block}{ c @{\quad} ( @{\,} *{7}{c} @{\,} ) }
V_{i+1} & A_iB_i & \Gamma_{\to i+1}  & 0 
\\
D'_{i+2}& 0& 0&  I   \\
D'_{i+1} \setminus D'_{i+2} & \Delta_{i+1\to} &0&0\\
\end{block}
\end{blockarray}
\normalsize 
~~~~~~,
\endeq
\eq\label{eq:tBtA}
\tB_i \tA_i=
\small 
\begin{blockarray}{ *{8}{c} }
& V_i & D'_i\setminus D'_{i+1}[1]& D'_{i+1}[1]\\
\begin{block}{ c @{\quad} ( @{\,} *{7}{c} @{\,} ) }
V_i& B_i A_i & \Gamma_{\to i}  & 0\\
D'_{i+1}& 0&0&  I   \\
D'_i \setminus D'_{i+1} &\Delta_{i\to} &\Delta_{i+1\to}\Gamma_{\to i+1}&0\\
\end{block}
\end{blockarray}
\normalsize 
~~~~~~,
\endeq
where $I$ represents the identity map of appropriate rank.
The admissibility conditions \eqref{eq:M2} follow from comparing the entries in \eqref{eq:tAtB} -- \eqref{eq:tBtA} together with the original admissibility condition \eqref{eq:L2} and \eqref{eq:T}. 

By the original stability condition \eqref{eq:L1}, the blocks in the first row for $\tA_i$ has full rank, and hence \eqref{eq:M3} follows.

From \eqref{eq:tBtA} we see that
\eq
\left.\pi_{D'_i} \tB_i \tA_i\right|_{D'_i} -x_i
=
\small 
\begin{blockarray}{ *{8}{c} }
&  D'_i \setminus D'_{i+1}[1] & D'_{i+1}[1]\\
\begin{block}{ c @{\quad} ( @{\,} *{7}{c} @{\,} ) }
D'_{i+1}&0&  0   \\
D'_i \setminus D'_{i+1} &\Delta_{i+1\to}\Gamma_{\to i+1}&0\\
\end{block}
\end{blockarray}
\normalsize 
~~~~~~, 
\endeq
and thus \eqref{eq:M1a} holds.
Finally, a straightforward verification such as \eqref{eq:TTSS2} shows that \eqref{eq:M1b} are satisfied.
\endproof
We can now combine Lemma~\ref{lem:hell} and the Nakajima-Maffei isomorphism to describe the complete flag assigned to each  quiver representation. 
\begin{cor}\label{cor:Fi}
If $(x, F_\bullet) = \tphi_{n-k,k}(p_{n-k,k}(A,B,\Gamma,\Delta))$ for some $(A,B,\Gamma,\Delta) \in \Lambda^+_{n-k,k}$,
then
\[
F_i = \ker \begin{blockarray}{ *{8}{c} }
 & D''_0 & \dots & D''_{t-1} & \dots & D''_{i-1} \\
\begin{block}{ c @{\quad} ( @{\,} *{7}{c} @{\,} )}
V_{i}& 
A_{1\to i}\Gamma_{\to 1}
&
\dots
&
A_{t\to i}\Gamma_{\to t}
&
\dots
&\Gamma_{\to i} \\
\end{block}
\end{blockarray}
\normalsize
~~~~~,
\]
where $D''_{t-1}$, for $1\leq t \leq i$, is the space (depending on $i$) described below:
\eq
D''_{t-1} = D'_{t-1}[t-1] - D'_{t}[t]
=
\begin{cases}
\<e_{t}, f_{t}\> &\tif t \leq k, 
\\
\hspace{3mm} \<e_{t}\> &\tif k+1 \leq t \leq n-k, 
\\
\hspace{3mm} \{0\} &\textup{otherwise}.
\end{cases}
\endeq
\end{cor}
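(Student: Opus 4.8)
The plan is to unwind the definition of $\tphi_{n-k,k}$ and then iterate the explicit matrices of Lemma~\ref{lem:hell}. By the commuting diagram \eqref{def:tphi} together with Proposition~\ref{prop:MS}, if $(\tA,\tB,\tGa,\tDe) = \Phi(A,B,\Gamma,\Delta)$ is the transversal lift supplied by Proposition~\ref{prop:Phi}, then the flag attached to $p_{n-k,k}(A,B,\Gamma,\Delta)$ is $F_i = \ker \tGa_{1\to i}$, where $\tGa_{1\to i} = \tA_{1\to i}\tGa_1$ in the notation of \eqref{eq:to}. Thus the entire statement reduces to computing this single composition of block matrices and reading off its kernel; in particular no information about $\tB$ or $\tDe$ is needed, and the domain $\tD_1 = D'_0$ is the ambient $\CC^n$ in which $F_i$ lives.

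First I would record the direct sum decomposition $\tD_1 = D'_0 = \bigoplus_{t=1}^{n-k} D''_{t-1}$, which follows from the telescoping definition $D''_{t-1} = D'_{t-1}[t-1] - D'_t[t]$ and a comparison with \eqref{def:D'}; the same computation gives $D''_{t-1} = \<e_t,f_t\>$, $\<e_t\>$, or $\{0\}$ in the three stated ranges. The off-diagonal identity blocks of $\tGa_1$ and of the $\tA_j$ in Lemma~\ref{lem:hell} implement, at each stage, the index-lowering identification carrying the copy of $D''_{t-1}$ sitting in $D'_{j-1}$ onto the copy sitting in $D'_j$; in particular $D'_0 \setminus D'_1[1] = D''_0$ and $\bigoplus_{t\geq 2}D''_{t-1} = D'_1[1]$ as subspaces of $D'_0$.

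The heart of the proof is an induction on $i$ tracking the image of each summand $D''_{t-1}$. Reading off $\tGa_1$, the layer $D''_0$ is sent into the $V_1$-component by $\Gamma_{\to 1}$, while $\bigoplus_{t\geq 2} D''_{t-1} = D'_1[1]$ is carried by the identity block into the $D'_1$-component with its basis indices lowered by one. Applying $\tA_j$ at each subsequent stage has three effects visible in its block form: it pushes the current $V_j$-component forward by $A_j$; it injects the newly exposed lowest layer $D'_j \setminus D'_{j+1}[1]$ into $V_{j+1}$ via $\Gamma_{\to j+1}$; and it carries the remaining layers identically into $D'_{j+1}$, again lowering indices by one. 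Consequently $D''_{t-1}$ is injected into $V_t$ via $\Gamma_{\to t}$ at stage $t$ and then transported to $V_i$ by $A_{t\to i}$, so that $\tGa_{1\to i}$ sends $D''_{t-1}$ for $t\leq i$ into the $V_i$-component by $A_{t\to i}\Gamma_{\to t}$ (with $A_{i\to i}=\id$ producing the last column $\Gamma_{\to i}$), and sends $D''_{t-1}$ for $t>i$ isomorphically into $D'_i$. Since the first column of each $\tA_j$ maps $V_j$ only into $V_{j+1}$, the $V$-part never feeds back into the $D'$-part, so the two families of images land in the complementary summands $V_i$ and $D'_i$ of $\tV_i$. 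Hence the layers with $t>i$ are injective and contribute nothing to the kernel, and $\ker \tGa_{1\to i}$ equals the kernel of the row map $\bigoplus_{t=1}^i D''_{t-1}\to V_i$ displayed in the statement.

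The main obstacle I anticipate is the index bookkeeping in the inductive step: one must verify that the summand peeled off at stage $j+1$, namely $D'_j \setminus D'_{j+1}[1]$, is exactly the image of $D''_j$ after $j$ successive index-lowering identities, so that it is injected by $\Gamma_{\to j+1}$ rather than by some reindexed variant. Once the identification $D'_j[1]=\bigoplus_{t>j}(\textup{shift of } D''_{t-1})$ is pinned down, the remainder is a direct multiplication of the matrices in Lemma~\ref{lem:hell}, and the kernel statement follows formally from the block triangularity separating the $V$- and $D'$-components.
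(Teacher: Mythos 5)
Your proposal is correct and takes essentially the same route as the paper: both reduce the statement to $F_i = \ker\tGa_{1\to i}$ via Proposition~\ref{prop:MS} and then compute $\tGa_{1\to i}$ by composing the block matrices of Lemma~\ref{lem:hell}, reading off the kernel from the block-triangular structure in which the identity block on $D'_i[i]$ forces that component to vanish. The only difference is one of exposition: the paper simply displays the resulting matrix \eqref{eq:FiKer} as a ``direct computation,'' whereas you spell out the layer-by-layer induction on $i$ that justifies it.
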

\proof
By Proposition~\ref{prop:MS} we know that the spaces $F_i$ are determined by the kernels of the maps $\tGa_{1\to i}$.
The assertion follows from a direct computation of $\tGa_{1\to i}$ using Lemma~\ref{lem:hell}, which is,
\eq\label{eq:FiKer}
\tGa_{1\to i}
=
\begin{blockarray}{ *{8}{c} }
 & D''_0 & \dots & D''_{t-1} & \dots & D''_{i-1} & D'_{i}[i] \\
\begin{block}{ c @{\quad} ( @{\,} *{7}{c} @{\,} )}
V_{i}& 
A_{1\to i}\Gamma_{\to 1}
&
\dots
&
A_{t\to i}\Gamma_{\to t}
&
\dots
&\Gamma_{\to i} & 0\\
D'_{i}  &0 &\dots & 0 &
\dots & 0 &I   \\
\end{block}
\end{blockarray}
\normalsize
~~~~~.
\endeq
\endproof
\ex\label{ex:Fi}
The quiver representations in $S_{2,2}$ are described as below:
\begin{equation}\label{eq:22-quiver} 
\xymatrix@C=18pt@R=9pt{
&
\ar@/_/[dd]_<(0.2){\Gamma_2}
D_2 =\<e,f\>
&   
\\   
& &    
\\ 
V_1 =\CC
	\ar@/^/[r]^{A_1}
	& 
	\ar@/^/[l]^{B_1} 
V_2  =\CC^2
	\ar@/_/[uu]_>(0.8){\Delta_2} 
	\ar@/^/[r]^{A_{2}}
	& 
	\ar@/^/[l]^{B_2}
V_3 = \CC. 
	}
\end{equation} 
Let $\tphi_{2,2}\inv(x, F_\bullet) =  p_{2,2}(A,B,\Gamma,\Delta) \in M_{2,2}$.
By Corollary~\ref{cor:Fi}, the  flag $F_\bullet$ is described by,
\[
\begin{split}
F_1 &=  
 \ker \begin{blockarray}{ *{8}{c} }
 & \<f_1, e_1\>  \\
\begin{block}{ c @{\quad} ( @{\,} *{7}{c} @{\,} )}
V_{1}& 
B_1\Gamma_2
\\
\end{block}
\end{blockarray}
\normalsize
~~~~~,
\\
F_2 &= 
 \ker \begin{blockarray}{ *{8}{c} }
 & \<f_1,e_1\> &\<f_2,e_2\>  \\
\begin{block}{ c @{\quad} ( @{\,} *{7}{c} @{\,} )}
V_{2}& 
A_1B_1\Gamma_2
&
\Gamma_2
\\
\end{block}
\end{blockarray}
\normalsize
~~~~~,
\\
F_3 &= 
 \ker \begin{blockarray}{ *{8}{c} }
 & \<f_1,e_1\> &\<f_2,e_2\>  \\
\begin{block}{ c @{\quad} ( @{\,} *{7}{c} @{\,} )}
V_{3}& 
A_2A_1B_1\Gamma_2
&
A_2\Gamma_2
\\
\end{block}
\end{blockarray}
\normalsize
~~~~~. 
\end{split}
\]
In particular, if 
$
A_1 = \begin{pmatrix}1 \\ 0\end{pmatrix} = B_2,
A_2 = \begin{pmatrix} 0 & 1 \end{pmatrix} = B_1,
\Gamma_2 = \begin{pmatrix}1&0\\0&1\end{pmatrix},
\Delta = 0,
$
then
\[
F_1 = \ker \begin{pmatrix} 0 & 1 \end{pmatrix} = \<f_1\>,
\quad
F_2 = \ker \begin{pmatrix} 0 & 1 &1& 0 \\ 0&0&0&1 \end{pmatrix} = \<f_1, e_1 - f_2\>,
\quad
F_3 = \ker \begin{pmatrix} 0 & 0 &0& 1  \end{pmatrix} = \<f_1, e_1, f_2\>.
\]
\endex
\ex
The quiver representations in $S_{3,1}$ are described as below:
\[
 \xymatrix@-1pc{
D_1=\<f\> \ar@/^/[dd]^{\Gamma_1} &&& &   D_3=\<e\>  \ar@/^/[dd]^{\Gamma_3.}  \\ 
 & & \\ 
V_1=\CC\ar@/^/[uu]^{\Delta_1} \ar@/^/[rr]^{A_1} 
& & \ar@/^/[ll]^{B_1}  V_2=\CC  \ar@/^/[rr]^{A_2}   
& & \ar@/^/[ll]^{B_2}  V_3=\CC  \ar@/^/[uu]^{\Delta_3}\\ 
 }
\] 
If $\tphi_{3,1}\inv(x, F_\bullet) =  p_{3,1}(A,B,\Gamma,\Delta) \in M_{3,1}$, then the flag $F_\bullet$ can be described by
\[
\begin{split}
F_1 &= 
 \ker \begin{blockarray}{ *{8}{c} }
 & \<f_1\> &\<e_1\>  \\
\begin{block}{ c @{\quad} ( @{\,} *{7}{c} @{\,} )}
V_{1}& 
\Gamma_1
&
B_1B_2\Gamma_3
\\
\end{block}
\end{blockarray}
\normalsize
~~~~~,
\\
F_2 &
= \ker \begin{blockarray}{ *{8}{c} }
 & \<f_1\> & \<e_1\>  & \<e_2\> \\
\begin{block}{ c @{\quad} ( @{\,} *{7}{c} @{\,} )}
V_{2}&
A_1\Gamma_1
& 
A_1B_1B_2\Gamma_3
&
B_2\Gamma_3
\\
\end{block}
\end{blockarray}
\normalsize
~~~~~,
\\
F_3 & =  \ker \begin{blockarray}{ *{8}{c} }
 & \<f_1\>& \<e_1\>  & \<e_2\> & \<e_3\>\\
\begin{block}{ c @{\quad} ( @{\,} *{7}{c} @{\,} )}
V_{3}& 
A_2A_1\Gamma_1
&
A_2A_1B_1B_2\Gamma_3
&
A_2B_2\Gamma_3
&
\Gamma_3
\\
\end{block}
\end{blockarray}
\normalsize
~~~~~.
\end{split}
\]
In particular, if 
$
A_1 = 1, A_2 =0,
B_1 = 0, B_2 =1,
\Gamma_1 = 1, \Gamma_3 = 1,
\Delta_1 = 0 = \Delta_3,
$
then
\[
F_1 = \ker \begin{pmatrix} 1 & 0 \end{pmatrix} = \<e_1\>,
\quad
F_2 = \ker \begin{pmatrix} 1 & 0 & 1  \end{pmatrix} = \<e_1, f_1 - e_2\>,
\quad
F_3 = \ker \begin{pmatrix} 0 & 0 &0& 1  \end{pmatrix} = \<e_1, f_1, e_2\>.
\]
\endex
\rmk
The previous example demonstrates that Corollary~\ref{cor:Fi} provides an efficient way to compute the corresponding complete flag in the Slodowy variety, while generally it is very implicit to apply Maffei's isomorphism $\tphi$.
It can also be seen that $\ker \tGa_{1\to i}$ is not  {necessarily} a direct sum of the kernels of the blocks.
\endrmk

Define $\Lambda^\cB_{n-k,k}$ to be the subset of $\Lambda^+_{n-k,k}$ so that
\eq
p_{n-k,k}(\Lambda^\cB_{n-k,k}) = \tphi\inv(\cB_{n-k,k}).
\endeq
In other words, $\Lambda^\cB_{n-k,k}$ is the incarnation of the Springer fiber we wish to study via the corresponding Nakajima quiver variety, which is characterized first by Lusztig \cite[Prop. 14.2(a)]{Lu91}, \cite[Lemma 2.22]{Lu98}.
Below we demonstrate an elementary proof in the two-row case using Lemma~\ref{lem:hell}, which is stated without proof in \cite[Rmk.~24]{Maf05}.

\begin{cor}\label{cor:Spr}
If $x\in \cN$ has Jordan type $(n-k,k)$, then
\[
\Lambda^\cB_{n-k,k} = \{(A,B,\Gamma,\Delta) \in \Lambda^+_{n-k,k}\mid\Delta = 0 \}.
\]
\end{cor}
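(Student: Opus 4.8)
The plan is to identify $\Lambda^\cB_{n-k,k}$ with the locus where the nilpotent attached by the Maffei--Nakajima isomorphism is exactly $x$, and then to convert that into the vanishing of $\Delta$ via the explicit block description of Lemma~\ref{lem:hell}. Writing $(\tA,\tB,\tGa,\tDe)=\Phi(A,B,\Gamma,\Delta)$, Proposition~\ref{prop:MS} tells us that the nilpotent part of $\tphi_{n-k,k}(p_{n-k,k}(A,B,\Gamma,\Delta))$ is $\tDe_1\tGa_1$, so that
\[
\Lambda^\cB_{n-k,k}=\{(A,B,\Gamma,\Delta)\in\Lambda^+_{n-k,k}\mid \tDe_1\tGa_1=x\}.
\]
First I would read off from \eqref{eq:tDG} that $\tDe_1\tGa_1$ already agrees with $x$ on the shifted summand $D'_1[1]$ (both act by the index shift $e_j\mapsto e_{j-1}$, $f_j\mapsto f_{j-1}$, which is the identity block there), while the only surviving block is $\Delta_{1\to}\Gamma_{\to 1}\colon \<e_1,f_1\>\to\<e_{n-k},f_k\>$, which must vanish for $x$. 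Thus membership reduces to the single identity $\Delta_{1\to}\Gamma_{\to 1}=0$, i.e.\ precisely the $i=1$ analogue of the relations \eqref{eq:T} that Lemma~\ref{lem:hell} establishes for $2\le i\le n-1$.

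For the inclusion $\{\Delta=0\}\subseteq\Lambda^\cB_{n-k,k}$ there is nothing to prove: the composite $\Delta_{1\to}$ is built from the sink maps $\Delta_k,\Delta_{n-k}$ (together with the $A$'s), so $\Delta=0$ forces $\Delta_{1\to}=0$, and hence $\Delta_{1\to}\Gamma_{\to 1}=0$.

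The real content is the reverse inclusion, which is where I expect the main difficulty. Stability \eqref{eq:L2} at the first vertex gives $V_1=\Im\Gamma_{\to 1}$, so $\Delta_{1\to}\Gamma_{\to 1}=0$ immediately yields $\Delta_{1\to}=0$; the trouble is that this only records the vanishing of $\Delta_{n-k}$ and $\Delta_k$ along the (at most one-dimensional) images $\Im(A_{n-k-1}\cdots A_1)$ and $\Im(A_{k-1}\cdots A_1)$, which is far weaker than $\Delta_k=\Delta_{n-k}=0$ once the $V_i$ grow in dimension. The resolution I would pursue is an upward induction on the combined outgoing maps $\Delta_{i\to}$. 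Combining the $i=1$ case just obtained with \eqref{eq:T} gives $\Delta_{i\to}\Gamma_{\to i}=0$, hence $\Delta_{i\to}|_{\Im\Gamma_{\to i}}=0$, for every $i$. Assuming inductively $\Delta_{i-1\to}=0$, the path relation $\Delta_{i\to}\circ A_{i-1}=\Delta_{i-1\to}=0$ shows $\Delta_{i\to}$ also vanishes on $\Im A_{i-1}$; since \eqref{eq:L2} gives $V_i=\Im A_{i-1}+\Im\Gamma_{\to i}$ for $i\le n-k$, I conclude $\Delta_{i\to}=0$. Running this up to $i=k$ and $i=n-k$ produces $\Delta_k=0$ and $\Delta_{n-k}=0$, i.e.\ $\Delta=0$.

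The step I expect to demand the most care is checking that the vanishing of the $2\times 2$ block $\Delta_{i\to}\Gamma_{\to i}$ genuinely kills each outgoing component on the \emph{full} source image $\Im\Gamma_{\to i}$ (both the $D_k$- and the $D_{n-k}$-contributions), and in matching the direction of the path relations against the regime where the dimension vector strictly increases ($i<k$) versus where it is constant ($k\le i\le n-k$). The degenerate case $n=2k$, in which the two sink vertices coincide at $k$ and $D_k$ is two-dimensional, should be verified separately but runs along the same lines.
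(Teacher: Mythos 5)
Your proposal is correct and takes essentially the same route as the paper's own proof: both reduce membership in $\Lambda^\cB_{n-k,k}$ to the single identity $\Delta_{1\to}\Gamma_{\to 1}=0$ via the block form in Lemma~\ref{lem:hell} and Proposition~\ref{prop:MS}, and then run the identical upward induction on $\Delta_{i\to}$, combining the path relation $\Delta_{i\to}A_{i-1}=\Delta_{i-1\to}$, the relations \eqref{eq:T}, and the stability condition \eqref{eq:L2}, with the same case split $i<k$ versus $k\le i\le n-k$. The only cosmetic difference is that you spell out the easy inclusion $\{\Delta=0\}\subseteq\Lambda^\cB_{n-k,k}$, which the paper leaves implicit.
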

\proof
If $(A,B,\Gamma,\Delta) \in \Lambda^\cB_{n-k,k}$ then $\Phi(A,B,\Gamma, \Delta) = (\tA, \tB, \tGa, \tDe)$ is exactly of the form described in Lemma~\ref{lem:hell},
 in addition to the fact that $\tDe_1\tGa_1 = x_0$ thanks to Proposition~\ref{prop:MS}.
Hence, we can extend \eqref{eq:T} by including
\eq\label{eq:T2}
\Delta_{1\to}\Gamma_{\to 1} = 0.
\endeq   
We can now show by induction on $i$ that for $j \in \{k,n-k\}$ and $0\leq i \leq j$,  
\eq\label{eq:0claim}
\Delta_{i\to j} = 0,
\endeq
which implies that $\Delta = 0$.

For the base case $i=0$, we use the stability condition \eqref{eq:L2} for $V_1$, which is, 
$\Im \Gamma_{\to 1} = V_1$.
Therefore, by \eqref{eq:T2} we have
\eq
0 = \Im \Delta_{1 \to }\Gamma_{\to 1} 
= \Delta_{1\to }(V_1).
\endeq
For the inductive step, we will deal with two cases: $i<k$ or $k\leq i < n-k$, assuming \eqref{eq:0claim} holds for $0,1, \ldots, i-1$. 
For the first case $i<k$, we use the stability condition \eqref{eq:L2} for $V_i$, i.e.,
\eq\label{eq:Si}
\Im A_{i-1} + \Im \Gamma_{k\to i} + \Im \Gamma_{n-k\to i} = V_i.
\endeq
Now by the inductive hypothesis we see that
\eq\label{eq:indhyp}
0 = \Delta_{i-1\to j} = \Delta_{i \to j} A_{i-1}.
\endeq
By \eqref{eq:Si} we have, for $j \in \{k,n-k\}$, 
\eq
\begin{split}
\Delta_{i\to j}(V_i) 
&=
\Delta_{i\to j}(\Im A_{i-1} + \Im \Gamma_{k\to i} + \Im \Gamma_{n-k\to i})
\\
&= 
\Im \Delta_{i \to j} A_{i-1}
+
\Im \Delta_{i \to j} \Gamma_{n-k\to i}
+
\Im \Delta_{i \to j} \Gamma_{k\to i}  
= 0,
\end{split}
\endeq
where the last equality follows from \eqref{eq:indhyp} and \eqref{eq:T}. 
Thus $\Delta_{k} = 0$. 

For the second case $k\leq i \leq n-k$ we have
$\Im A_{i-1} + \Im \Gamma_{n-k \to i} = V_i$.
Similarly,
\eq
\Delta_{i\to n-k}(V_i) = 
\Im \Delta_{i \to n-k} A_{i-1}
+
\Im \Delta_{i \to n-k} \Gamma_{n-k\to i} 
= 0,
\endeq
which leads to $\Delta_{n-k} = 0$. We are done.
\endproof
\subsection{Maffei's immersion and components of Springer fibers} 
Now we can prove the final piece of the main theorem using Lemma~\ref{lem:hell}.
Given a cup diagram $a\in B_{n-k,k}$, define 
\eq\label{defi:Springer_component_in_quiver}
\Lambda^a
=\{
(A,B,\Gamma,\Delta) \in \Lambda^\cB_{n-k,k}
\mid
\ker B_{i+\delta(i)-1 \to i-1} 
= \ker A_{\sigma(i)-\delta(i) \to \sigma(i)} 
\tforall i\in V_l^a
\},
\endeq
where the maps $A_{i\to j}, B_{j \to i}$ are defined similarly as their tilde versions in \eqref{eq:to}.
\begin{prop}\label{prop:hellA}
\label{prop:a2}
For $a \in B_{n-k,k}$,
we have an equality
$
\Phi(\Lambda^a) = \fT^a.
$
\end{prop}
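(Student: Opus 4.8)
The plan is to move the defining kernel relations across the $\GL(V)$-equivariant isomorphism $\Phi$ of Proposition~\ref{prop:Phi} by means of the explicit block shapes of Lemma~\ref{lem:hell}. As $\Phi\colon\Lambda^+_{n-k,k}\to\fT^+_{n-k,k}$ is bijective, it suffices to prove two statements: (i) $\Phi\inv(\fT^a)\subseteq\Lambda^\cB_{n-k,k}$, i.e.\ every member of $\fT^a$ has $\Delta=0$; and (ii) for $(A,B,\Gamma,0)\in\Lambda^\cB_{n-k,k}$ with $(\tA,\tB,\tGa,\tDe)=\Phi(A,B,\Gamma,0)$, and each $i\in V_l^a$, the untilded relation $\ker B_{i+\delta(i)-1\to i-1}=\ker A_{\sigma(i)-\delta(i)\to\sigma(i)}$ is equivalent to the tilded relation of \eqref{defi:M_n^a}. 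Together these identify $\Lambda^a$ with $\fT^a$ under $\Phi$.

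For (i) I would argue by saturation. The set $\fT^a$ is $\GL(V)$-stable: under \eqref{def:GL action} one has $\ker(g\cdot\tA_{p\to q})=g_p\ker\tA_{p\to q}$ and $\ker(g\cdot\tB_{q\to p})=g_q\ker\tB_{q\to p}$, and since $\sigma(i)-\delta(i)=i+\delta(i)-1$ both kernels in \eqref{defi:M_n^a} live in $\tV_{i+\delta(i)-1}$ and are scaled by the same $g_{i+\delta(i)-1}$, so the relation is preserved. Thus $\fT^a$ is a union of $\GL(V)$-orbits, hence saturated for the quotient $\widetilde{p}$ on $\fT^+_{n-k,k}$, whose fibers are the $\GL(V)$-orbits by \eqref{def:tphi} and Proposition~\ref{prop:Phi}. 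By Proposition~\ref{prop:a1}, $\widetilde{p}(\fT^a)=\tphi\inv(K^a)\subseteq\tphi\inv(\cB_x)$; by Corollary~\ref{cor:Spr} and Proposition~\ref{prop:MS}, $\Phi(\Lambda^\cB_{n-k,k})$ is the saturated locus $\{\tDe_1\tGa_1=x\}=\widetilde{p}\inv(\tphi\inv(\cB_x))\cap\fT^+_{n-k,k}$. Saturation then forces $\fT^a\subseteq\Phi(\Lambda^\cB_{n-k,k})$, which is (i).

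For (ii) I would plug the block matrices of Lemma~\ref{lem:hell} (now with $\Delta=0$) into the composites \eqref{eq:to}. All $\Delta$-entries vanish, so $\tB_{i+\delta(i)-1\to i-1}$ acts as $B_{i+\delta(i)-1\to i-1}$ on the $V$-block and as the inclusion on the $D'$-block, giving $\ker\tB_{i+\delta(i)-1\to i-1}=\ker B_{i+\delta(i)-1\to i-1}\oplus0$. Stability \eqref{eq:M3} makes each $\tA_j$ surjective (as $\tD_j=0$ for $j\geq2$), so $\tA_{\sigma(i)-\delta(i)\to\sigma(i)}$ is surjective and, using $\dim\tV_j=n-j$, has $\delta(i)$-dimensional kernel. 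Since $\tA_{p\to q}(v,0)=(A_{p\to q}v,0)$, the intersection of $\ker\tA_{\sigma(i)-\delta(i)\to\sigma(i)}$ with $V_{i+\delta(i)-1}\oplus0$ equals $\ker A_{\sigma(i)-\delta(i)\to\sigma(i)}\oplus0$. Hence the tilded relation $\ker\tA=\ker\tB$ immediately yields the untilded one upon intersecting with $V_{i+\delta(i)-1}\oplus0$; this settles the forward implication.

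The main obstacle is the reverse implication in (ii): from $\ker A_{\sigma(i)-\delta(i)\to\sigma(i)}=\ker B_{i+\delta(i)-1\to i-1}$ one must recover $\ker\tA=\ker\tB$, i.e.\ show that the $\delta(i)$-dimensional space $\ker\tA_{\sigma(i)-\delta(i)\to\sigma(i)}$ acquires no $D'$-component and so coincides with $\ker B_{i+\delta(i)-1\to i-1}\oplus0$. This forces one to control the off-diagonal $\Gamma$-blocks of the $\tA_j$ and the shifted spaces $D'_\bullet[t]$, which is delicate. The cleanest resolution I foresee is to bypass the $D'$-bookkeeping entirely: by Proposition~\ref{prop:a1} the tilded relation is equivalent to the Fung/Stroppel--Webster cup relation on the flag $F_\bullet=\ker\tGa_{1\to\bullet}$, so it suffices to verify, via the explicit flag description of Corollary~\ref{cor:Fi} together with the admissibility relations $B_jA_j=A_{j-1}B_{j-1}$ arising from \eqref{eq:L1} with $\Delta=0$, that the untilded relation encodes exactly the same cup relation. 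Both sides then reduce to a single condition on $F_\bullet$, completing the proof.
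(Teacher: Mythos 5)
Your overall frame is the same as the paper's: both reduce the statement, via the block matrices of Lemma~\ref{lem:hell}, to showing that for tuples with $\Delta=0$ the tilded kernel relation of \eqref{defi:M_n^a} is equivalent to the untilded one of \eqref{defi:Springer_component_in_quiver}. Your step (i) (the $\GL(V)$-saturation argument forcing $\Delta=0$ on $\Phi\inv(\fT^a)$) is correct and in fact spelled out more carefully than in the paper, which disposes of this point by citing Corollary~\ref{cor:Spr}; and your forward implication in (ii) — $\ker\tB_{b\to a}$ lies in the $V$-block and equals $\ker B_{b\to a}\oplus 0$, then intersect with $V_b\oplus 0$ — is exactly the paper's easy direction (here $b=i+\delta(i)-1$, $a=i-1$, $c=\sigma(i)$).

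The genuine gap is the reverse implication, precisely where you flag "the main obstacle," and your proposed bypass does not close it: it restates the problem rather than solving it. Passing through Proposition~\ref{prop:a1} converts the tilded relation into the cup relation on the flag $F_\bullet=\ker\tGa_{1\to\bullet}$, but by Corollary~\ref{cor:Fi} that flag is assembled from the blocks $A_{t\to j}\Gamma_{\to t}$ — exactly the off-diagonal hook entries whose bookkeeping you hoped to avoid. One still has to show that the untilded relation $\ker B_{b\to a}=\ker A_{b\to c}$, a condition on the $V$-blocks alone, forces every hook map $A_{t+1\to c}\Gamma_{k\to t+1}$ and $A_{t+1\to c}\Gamma_{n-k\to t+1}$ (for $b\le t\le c-1$, the paper's \eqref{eq:a23}) to be nonzero, so that $\ker\tA_{b\to c}$ acquires no $D''$-component. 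This implication is \emph{false} for merely admissible tuples (take $A=B=\Gamma=0$: admissibility and the untilded relation hold, all hooks vanish, and the tilded relation fails), so it cannot follow from admissibility $B_jA_j=A_{j-1}B_{j-1}$ plus Corollary~\ref{cor:Fi}, which is all your sketch invokes; the stability condition \eqref{eq:L2} is essential. The paper devotes the bulk of its proof to exactly this point: assuming some hook vanishes, it uses stability on $V_1,V_2,\ldots$ to construct chains of nonzero vectors $v_i\in V_i\cap\Im A_{\bullet\to i}$ and produce a vector $v_b\in\ker B_{b\to a}\setminus\ker A_{b\to c}$, contradicting the untilded relation, with a separate induction on the hook size $h=c-t-1$ and dimension counts at each stage. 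Some stability-based argument of this kind must appear, and it is absent from your proposal.
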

\proof
Let $\Phi\inv(\tA, \tB, \tGa, \tDe) = (A,B, \Gamma, \Delta) \in \Lambda^a$.
From Corollary~\ref{cor:Spr} we see that $\Delta$ must be zero.
The proposition follows as long as we show that
$\ker \tB_{i+\delta(i)-1 \to i-1} 
= \ker \tA_{\sigma(i)-\delta(i) \to \sigma(i)}$ 
is equivalent to $\ker B_{i+\delta(i)-1 \to i-1} 
= \ker A_{\sigma(i)-\delta(i) \to \sigma(i)}$. 
For simplicity, let us use the shorthand $a = i-1 < b = i+\delta(i)-1 = \sigma(i) - \delta(i) < c = \sigma(i)$.

Using Lemma~\ref{lem:hell}, we obtain that,
for $a<b<c$, 
\eq
\tB_{b\to a}=
\small 
\begin{blockarray}{ *{8}{c} }
& V_{b} &   D'_{b}\\
\begin{block}{ c @{\quad} ( @{\,} *{7}{c} @{\,} ) }
V_a & {B}_{b\to a} & 0 \\
D'_{b}& 0 &   I   \\
D'_a \setminus D'_b & 0 &0\\
\end{block}
\end{blockarray}
\normalsize 
~~~~~,
\endeq
\eq
\tA_{b\to c}
=
\begin{blockarray}{ *{8}{c} }
& V_b & D''_b & \dots & D''_t & \dots & D''_{c-1} & D'_{c}[c-b] \\
\begin{block}{ c @{\quad} ( @{\,} *{7}{c} @{\,} )}
V_{c}& {A}_{b\to c}&
A_{b+1\to c}\Gamma_{\to b+1}
&
\dots
&
A_{t+1\to c}\Gamma_{\to t+1}
&
\dots
&\Gamma_{\to c} & 0\\
D'_{c} & 0 &0 &\dots & 0 &
\dots & 0 &I   \\
\end{block}
\end{blockarray}
\normalsize
~~~~~,
\endeq
where $D''_i$ is the space (depending on fixed $b<c$) described below:
\eq\label{def:D''}
D''_t = D'_{t}[t-b] \setminus D'_{t+1}[t-b+1]
=
\begin{cases}
\<e_{t-b+1}, f_{t-b+1}\> &\tif t \leq k, 
\\
\hspace{6mm} \<e_{t-b+1}\> &\tif k+1 \leq t \leq n-k, 
\\
\hspace{10mm} \{0\} &\textup{otherwise}.
\end{cases}
\endeq

Since $\tB_{b\to a}$ acts as an identity map on $D'_b$, its kernel must lie in $V_b$.
Moreover, $\ker \tB_{b\to a} = \ker B_{b\to a}$. 

Assume that $\ker \tB_{b \to a} 
= \ker \tA_{b \to c}$.
It follows that $\ker \tA_{b \to c} \subset V_b$.
In other words,
$D'_b \setminus D'_{c-1}[c-b]$ must not lie in the kernel, and hence $\ker A_{b\to c} =\ker \tA_{b\to c} = \ker \tB_{b \to a} = \ker B_{b\to a}$.

On the other hand, assuming $\ker B_{b \to a} 
= \ker A_{b \to c}$, we need to show that
$
D'_b \setminus D'_{c-1}[c-b] \not\in \ker
\tA_{b \to c}.
$
In other words, for $b \leq t \leq c-1$,
any composition of maps of the following form must be nonzero: 
\eq\label{eq:a23}
\begin{array}{cc}
\xymatrix@C=25pt@R=25pt{
&&
{D_{k}}  
	\ar@/^/[d]^{\Gamma_{k}}
\\
{V_{t+1}}
	\ar@/_/[r]_{A_{t+1 \to c}}  
&
{V_{c}}
	\ar@/_/[l]_{B_{c\to t+1}}  
&
	\ar@/_/[l]_{B_{k \to c}}	
{V_{k}}  
}
&
\xymatrix@C=25pt@R=25pt{
&&
{D_{n-k}}  
	\ar@/^/[d]^{\Gamma_{n-k}}
\\
{V_{t+1}}
	\ar@/_/[r]_{A_{t+1 \to c}}    
&
{V_{c}}
	\ar@/_/[l]_{B_{c\to t+1}}    
&
	\ar@/_/[l]_{B_{n-k \to c}}		
{V_{n-k}}  
}
\\
\\
\tif t+1 \leq k, 
&
\tif t+1 \neq n-k. 
\end{array}
\endeq

Note that the spaces $D''_t$ are nonzero only when $t \leq n-k$, and hence the maps $A_{t+1 \to c}\Gamma_{\to t+1}$ in \eqref{eq:a23} only exist when $c \leq n-k$.
Therefore, the proposition is proved for $c > n-k$, and we may assume now $c \leq n-k$.

We first prove \eqref{eq:a23} for the base case $t=c-1$ by contradiction. 
Our strategy is to construct nonzero vectors $v_i \in V_i \cap \Im A_{1\to i}$ for $1\leq i \leq c$.
If this claim holds, then by admissibility conditions from $V_1$ to $V_i$, we have
\eq
B_{i-1}(v_i) = 
B_{i-1}A_{1 \to i}(v_{1}) =
A_{1\to i-1}B_1A_1(v_1) = 0,
\endeq
and hence there is a nonzero vector $v_b \in \ker B_{b\to a}$.
On the other hand, $v_b \not\in \ker A_{b\to c}$ since $A_{b\to c}(v_b) = v_c \neq 0$, a  contradiction.
Now we prove the claim. Suppose that $\Gamma_{n-k \to c} = 0$.
\eq
\Gamma_{n-k \to i} = B_{c\to i}\Gamma_{n-k \to c}= 0, \quad \tforall i < c.
\endeq
By the stability condition on $V_1$, we have 
\eq
V_1 = \Im \Gamma_{k\to1} + \Im \Gamma_{n-k\to 1} = \Gamma_{k\to1}(f),
\endeq
 and hence the vector $\phi_i = \Gamma_{k\to i}(f) \in V_i$ are all nonzero for $i \leq k$.
Define $v_i= A_{1\to i}(\phi_1)$ for all $i$.
The stability condition on $V_2$ now reads
\eq
V_2 = \Im A_1 + \Im \Gamma_{k\to2} + \Im \Gamma_{n-k\to 2} = \<A_1(\phi_1)\> + \<\phi_2\>.
\endeq
Since $V_2$ is 2-dimensional, the vector $v_2 = A_1(\phi_1)$ must be nonzero. 
An easy induction shows that, for $2\leq i\leq k$, the vector $v_i$ is nonzero.
For $k +1\leq i \leq c$, the stability condition on $V_i$ is then
\eq
V_i = \Im A_{i-1} + \Im \Gamma_{n-k \to i} = A_{i-1}(V_{i-1}).
\endeq
Since both $\dim V_{i} = \dim V_{i-1} = k$, the map $A_{i-1}$ is of full rank, and hence $v_i \neq 0$ for $k+1\leq i \leq c$.
Therefore, we have seen that the assumption that $\Gamma_{n-k \to c} = 0$ leads to a contradiction, and hence $\Gamma_{n-k \to c} \neq 0$.
A similar argument shows that $\Gamma_{k \to c}\neq 0$.
The base case is proved.

Next, we are to show that \eqref{eq:a23} holds for $b\leq t < c-1$.
We write for short $h = c - t - 1$ to denote the size of the ``hook'' in the map $A_{t+1 \to c}\Gamma_{n-k \to t+1}$.
For example, as shown in the figure below, the maps $\Gamma_{\to c}$ have hook size 0, the maps $A_{c-1}\Gamma_{\to c-1}$ have hook size 1, and so on:
\[ 
\begin{array}{ccc}
\xymatrix@C=25pt@R=25pt{
&
{D_{j}}  
	\ar@/_/[d]_{\Gamma_{j}}
\\
{V_{c}}
&
	\ar@/_/[l]	
{V_{j}}  
}
&
\xymatrix@C=25pt@R=25pt{
&&
{D_{j}}  
	\ar@/_/[d]_{\Gamma_{j}}
\\
{V_{c-1}}
	\ar@/_/[r]_{A_{c-1}}
&
{V_{c}}
&
	\ar@/_/[ll]_{B_{j\to c-1}}
{V_{j}}  
}
&
\xymatrix@C=25pt@R=25pt{
&&
{D_{j}}  
	\ar@/_/[d]_{\Gamma_{j}}
\\
{V_{c-2}}
	\ar@/_/[r]_{A_{c-1}A_{c-2}}  
&
{V_{c}}
&
	\ar@/_/[ll]_{B_{j\to c-2}}
{V_{j}.}  
}
\\
\\
h=0
&
h=1
&
h=2
\end{array}
\] 
Note that $h$ is strictly less than the size $c-b$ of the cup.
Our strategy is to construct nonzero vectors $v_{i} \in V_{i} \cap \Im A_{h+1\to i}$ for $h+1\leq i \leq c$.
If this claim holds, then by admissibility conditions from $V_{h+1}$ to $V_i$ and an induction on $i$, we have
\eq
B_{i \to i-h-1}(v_i) = 
B_{i \to i-h-1} A_{i-1}( v_{i-1}) =  
A_{i-2} B_{i-1 \to i-h-2}( v_{i-1}) = 0.
\endeq
Note that the initial case holds since
$B_{h+1 \to 0}(v_{h+1}) = 0$.
Hence, there is a nonzero vector $v_b \in \ker B_{b\to b-h-1} \subset \ker B_{b\to a}$.
On the other hand, $v_b \not\in \ker A_{b\to c}$ since $A_{b\to c}(v_b) = v_c \neq 0$, a  contradiction.
We can now prove the claim.
Suppose first that $A_{t+1 \to c}\Gamma_{n-k \to t+1} = 0$.
By the admissibility conditions from $V_{t+2}$ to $V_{n-k+h-1}$, we have
\eq\label{eq:admt}
0 = A_{t+1 \to c}\Gamma_{n-k \to t+1} = 
B_{n-k+h\to c}A_{n-k \to n-k+h}\Gamma_{n-k},
\endeq
which can be visualized from the figure below by equating the two maps $D_{n-k}\to V_c$ represented by composing solid arrows and dashed arrows, respectively:
\[ 
\xymatrix@C=25pt@R=25pt{
&&
{D_{n-k}}  
	\ar@/^/@{.>}[d]^{\Gamma_{n-k}}
	\ar@/_/[d]_{\Gamma_{n-k}}
\\
{V_{t+1}}
	\ar@/_/[r]  
&
{V_{c}}
	\ar@/_/[l]  
&
	\ar@/^/@{.>}[l]
	\ar@/_/[l]	
{V_{n-k}}  
	\ar@/^/@{.>}[r]
	& 
	\ar@/^/@{.>}[l]
	{V_{n-k+h}.} 
}
\] 
For all $1 \leq i \leq t+1$, we will show now any map $D_{n-k} \to V_i$ with exactly a size $h$ ``hook'' is zero.
Precisely speaking, the admissibility conditions from $V_{i+1}$ to $V_{n-k+h-1}$ imply that
\eq
A_{i \to i+h}\Gamma_{n-k\to i}
=
B_{n-k+h \to i+h}A_{n-k \to n-k+h}\Gamma_{n-k},
\endeq 
which can be visualized as the figure below:
\[ 
\xymatrix@C=25pt@R=25pt{
&&
{D_{n-k}}  
	\ar@/^/@{.>}[d]^{\Gamma_{n-k}}
	\ar@/_/[d]_{\Gamma_{n-k}}
\\
V_i
	\ar@/_/[r]  
&
{V_{i+h}}
	\ar@/_/[l]  
&
	\ar@/^/@{.>}[l]
	\ar@/_/[l]	
{V_{n-k}}  
	\ar@/^/@{.>}[r]
& 
	\ar@/^/@{.>}[l]
	{V_{n-k+h}.}
}
\] 
It follows that $A_{i \to i+h}\Gamma_{n-k\to i} = 0$ since it is a composition of a zero map in \eqref{eq:admt}.
Since the hook size $h$ is less than the cup size $c-b$, which is less or equal to the total number $k$ of cups, we have
$h < k$ and so 
\eq
\dim V_h = h, 
\quad
\dim V_{h+1} = h+1.
\endeq
We claim that 
\eq\label{eq:claim1_t} 
\Im \Gamma_{k \to h+1} \neq 0.
\endeq
By the stability condition on $V_1$, we have 
\eq
V_1 = \Im \Gamma_{k\to 1} + \Im \Gamma_{n-k \to 1}.
\endeq
For the dimension reason, either $\Gamma_{k\to 1}$ or $\Gamma_{n-k\to 1}$ is nonzero. 
If $\Gamma_{k\to 1} \neq 0$ then $\Gamma_{k\to h+1} \neq 0$, and the claim follows.
If $\Gamma_{n-k\to 1} \neq 0$,
we define, for $1\leq i \leq l\leq n-k$, 
\eq
\epsilon_i = \epsilon^{(i)}_i = \Gamma_{n-k \to i}(e) \neq 0, 
\quad
\epsilon^{(i)}_l = A_{i\to l}(\epsilon_i).
\endeq
An easy induction shows that if $\epsilon^{(i)}_l = 0$ for some $1\leq i \leq l \leq h$ then $\Gamma_{k\to l} \neq 0$, which proves the claim. 
So we we now assume that $\epsilon^{(i)}_l \neq 0$ for all $1\leq i \leq l \leq h$.
Note that
\eq
\epsilon^{(1)}_{1+h}
 =
A_{1\to 1+h} \Gamma_{n-k \to 1}(e) = 0,
\endeq
and hence $\rk A_h = h-1$.
Now the  stability condition on $V_{h+1}$ implies that 
\eq
\dim V_{h+1} = \rk A_h + \rk \Gamma_{k\to h+1} + \rk \Gamma_{n-k\to h+1}, 
\endeq
and hence $\rk \Gamma_{k\to h+1} =1$.
The claim \eqref{eq:claim1_t} is proved.
Moreover, the vectors $\phi_i = \Gamma_{k\to i}(f) \in V_i$ are all nonzero for $h+1 \leq i \leq k$.
Define $v_i = A_{h+1 \to i}(\phi_{h+1})$ for all $i>h$.

The stability condition on $V_{h+2}$ now reads
\eq\label{eq:dimt}
V_{h+2} = \begin{cases}
\Im A_{h+1} + \Im \Gamma_{k\to h+2} + \Im \Gamma_{n-k\to h+2} 
&\tif h+2 \leq k, 
\\
\qquad \Im A_{h+1} + \Im \Gamma_{n-k\to h+2} 
&\tif h+2 > k.
\end{cases}
\endeq
In either case, a dimension argument similar to the one given in the base case $t=c-1$ shows that the vector $v_{h+2} = A_{h+1}(\phi_{h+1})$ must be nonzero since
\eq
\epsilon^{(i)}_{l}
 =
A_{i\to l} \Gamma_{n-k \to i}(e) = 0 
\quad
\textup{for}
\quad
l \geq i+h.
\endeq 
For $h+2\leq i\leq c$, a dimension argument using \eqref{eq:dimt} shows that
$v_i \neq 0$,
which leads to a contradiction, and hence $A_{t+1 \to c}\Gamma_{n-k \to t+1} \neq 0$.
A similar argument shows that $A_{t+1 \to c}\Gamma_{k \to t+1} \neq 0$.
The proposition is proved.
\endproof
\thm\label{thm:main1}
Recall $\Lambda^a$ from \eqref{defi:Springer_component_in_quiver}.
For any cup diagram $a \in B_{n-k,k}$,  we have an equality
\eq
p_{n-k,k}(\Lambda^a) = \tphi\inv(K^a).
\endeq
As a consequence, $\tphi\inv(\cB_{n-k,k}) = \bigcup_{a \in B_{n-k,k}} p_{n-k,k}(\Lambda^a)$.
\endthm
\proof
We have
\eq
\begin{aligned}
\tphi\inv(K^a)&=\widetilde{p}(\fT^a)\quad &&\textup{by Proposition}~\ref{prop:a1}
\\
&=p_{n-k,k}(\Phi\inv(\fT^a))\quad &&\textup{by Proposition}~\ref{prop:Phi}(b)(c)
\\
&=p_{n-k,k}(\Lambda^a) \quad  &&\textup{by Proposition}~\ref{prop:a2}.
\end{aligned}
\endeq
\endproof

\subsection{The ray condition}
For completeness, in this section we characterize the ray condition $F_i = x^{-\frac{1}{2}(i-\rho(i))}(x^{n-k-\rho(i)} F_n ),$ on the quiver representation side.
\prop\label{prop:ray}
Let $(x, F_\bullet) = \tphi(A,B,\Gamma,0) \in K^a \subset \tcS_{n-k,k}$.
Then the ray condition is equivalent to
\eq
\begin{cases}
\:\: B_iA_i = 0
&\tif c(i) \geq 1, 
\\
\Gamma_{n-k\to i} = 0
&\tif c(i) = 0,
\end{cases}
\endeq
where
$c(i) = \frac{i-\rho(i)}{2}$ is the total number of cups to the left of $i$.
\endprop
\proof
Write $\rho = \rho(i)$ and $c = c(i)$ for short.
By Corollary~\ref{cor:Fi}, the ray relation is equivalent to that
\eq\label{eq:rayblock}
\ker(A_{n-k-\rho+1 \to n} \Gamma_{\to n-k-\rho+1}|\ldots|A_{n-k \to n}\Gamma_{n-k})
=
\ker(A_{c+1 \to i}\Gamma_{\to c+1}|\ldots|\Gamma_{\to i}).
\endeq
Note that there are $\rho$ blocks on the left hand side of \eqref{eq:rayblock} and each block is a zero map; while there are $i -c = \rho + c \geq \rho$ blocks on the right hand side. 
Hence, the defining relations, by an elementary case-by-case analysis, are
\eq
\begin{cases}
A_{c+\rho \to i} \Gamma_{n-k\to c+\rho} = 0,
\quad
A_{c+\rho+1 \to i} \Gamma_{n-k\to c+\rho+1} \neq 0 
&\tif c \geq 1, 
\\
\qquad \qquad \qquad 
A_{c+\rho \to i} \Gamma_{n-k\to c+\rho} = 0 
&\tif c = 0.
\end{cases}
\endeq
Note that by definition, $i = \rho$ when $c = 0$.
We are done.
\endproof

\section{Springer fibers for classical types}
\subsection{Springer fibers and Slodowy varieties of type D}
From now on, let $n = 2m$ be an even positive integer.
Let $\pi^\textup{D}$ be the subset of all partitions of $2m$ whose even parts occur even times, i.e., 
\eq
\pi^\textup{D}
=\{\lambda= (\lambda_i)_i \vdash 2m \mid 
\#\{i \mid \lambda_i = j\} \in 2\ZZ
\tforall j\in 2\ZZ
\}.
\endeq
It is known (\cite{Wi37}) that $\pi^\textup{D}$ is in bijection with the $O_{n}(\CC)$-orbits of the type D nilpotent cone $\cN^\textup{D}$.
Now we fix $\ld$ to be a two-row partition in $\pi^\textup{D}$, and hence it is of the following form:
\eq\label{eq:partitionD}
\ld = (m,m),
\quad
\textup{or}
\quad
\ld = (n-k, k) \in (2\ZZ+1)^2.
\endeq
For each $\ld$ of the form as in \eqref{eq:partitionD}, 
we define a $n$-dimensional $\CC$-vector space $V_\ld$ with (ordered) basis
\eq\label{eq:basisVld}
\{ e^\ld_1, e^\ld_2, \ldots, e^\ld_{n-k}, f^\ld_1, f_2^\ld, \ldots f^\ld_k \},
\endeq
and a non-degenerate symmetric bilinear form $\beta_\ld: V_\ld \times V_\ld \to \CC$, 
 whose associated matrix, under the ordered basis \eqref{eq:basisVld}, is
\eq\label{eq:beta}
M^\ld=
\begin{cases}
\small
\begin{blockarray}{ *{8}{c} }
 & \{e^\ld_i\} & \{f^\ld_i\} \\
\begin{block}{ c @{\quad} ( @{\,} *{7}{c} @{\,} )}
\{e^\ld_i\}& 0 & J_m 
\\
\{f^\ld_i\}& J^t_m & 0
\\
\end{block}
\end{blockarray}~~~ 
\normalsize
&\:\:\tif n-k = k; 
\\
\small
\begin{blockarray}{ *{8}{c} }
 & \<e^\ld_i\> & \<f^\ld_i\> \\
\begin{block}{ c @{\quad} ( @{\,} *{7}{c} @{\,} )}
\<e^\ld_i\>& J_{n-k} & 0 
\\
\<f^\ld_i\>& 0 & J_k
\\
\end{block}
\end{blockarray}~~~
\normalsize
&\:\:\tif n-k > k,
\end{cases}
\quad
\textup{where}
\quad
J_i = 
\small
\begin{pmatrix}
&&&1
\\
&&-1&
\\
&\iddots&&
\\
(-1)^{i-1}
\end{pmatrix}
\normalsize.
\endeq
Usually we omit the superscripts when there is no ambiguity. Note that in Section~\ref{sec:irred} we shall see the need to keep the superscripts.

For each subspace $W$ of $V_\ld$ we let $W^\perp =W^\perp(\beta_\ld)$ be its orthogonal complement in $V_\ld$ with respect to $\beta_\ld$.
$W$ is called {\em isotropic} if $W \subseteq W^\perp$.
Denote the orthogonal Lie algebra corresponding to $\beta_\ld$ by
\eq\label{def:so}
\fso_{n}(\CC; \beta_\ld) = \{ X \in \fgl_n \mid M^\ld X = -X^tM^\ld \}.
\endeq
Let $\cBD = \cBD(\beta_\ld)$ be the flag variety of $O_{2m}(\CC)$ with respect to $\beta_\ld$, namely,
\eq
\cB^\textup{D}
= \{F_\bullet \in\cB \mid F_i = F_{n-i}^\perp \tforall  i\}.
\endeq
\rmk
It is intentional to use $O_{n}(\CC)$ rather than the special orthogonal group so that $\cBD$ has two connected components. 
\endrmk
Let $\tcND$ be the cotangent bundle of $\cBD$. Explicitly, we have
\eq
\tcND = T^*\cBD = \{(u,F_\bullet)\in \cND \times \cBD \mid u (F_i) \subseteq F_{i-1} \tforall i \}.
\endeq
The type D Springer resolution $\mu_\textup{D}: \tcND \to \cND$ is given by $(u, F_\bullet) \mapsto u$.
Given $x \in \cND$, the associated Springer fiber of type D is defined as the subvariety 
\eq
\cBD_x = \{ F_\bullet \in \cBD
\mid
x F_i \subseteq F_{i-1}
\tforall i
\}. 
\endeq
The type D Springer fiber only depends (up to isomorphism) on the $O_n$-orbit containing $x$. 
For $x\in \cND$,
we denote the
type D Slodowy transversal slice 
of (the $O_n$-orbit of) 
$x$ by the variety 
\eq
\cSD_x =\{ u\in \cND \mid [u-x,y]=0\}, 
\endeq
where $(x,y,h)$ is a $\fsl_2$-triple in $\cND$.  
Let the type D Slodowy variety associated to $x$ be defined as
\eq
\tcSD_{x}=\mu_\textup{D}\inv(\cSD_{x}) = \{ (u, F_\bullet) \in \cND\times \cBD \mid [u-x,y]=0,  \:\:   u (F_i) \subseteq F_{i-1} \textup{ for all }i \}.
\endeq  
If $x \in \cND$ is of Jordan type $(n-k,k)$, we write
\eq
\cBD_{n-k,k}:= \cBD_{x},
\quad
\cSD_{n-k,k} := \cSD_{x},
\quad
\tcSD_{n-k,k} := \tcSD_{x}.
\endeq
\subsection{Irreducible components of type D Springer fibers}\label{sec:mcup}
In order to describe the irreducible components of $\cBD_{n-k,k}$, we define the notion of a  marked cup diagram.

\begin{definition}
A {\it marked cup diagram} is a cup diagram (see \eqref{eq:cup}) in which each cup or ray may be decorated with a single marker satisfying the following rules:
\begin{enumerate}[($M1$)]
\item 
The vertices on the axis are labeled by $1, 2, \ldots, m$,
\item
A marker can be connected to the right border  of the rectangular region by a path which does not intersect any other cup or ray.  
\end{enumerate}
Given a cup diagram $\da\in \BD_{n-k,k}$,
 denote the sets of all vertices connected to the left (resp.\ right) endpoint of a marked cup in $\da$ by $X_l^{\da}$ (resp.\ $X_r^{\da}$); while the sets of vertices connected to the left (resp.\ right) endpoint of an unmarked cup in $\da$ are denoted by
$V_l^{\da}$ (resp.\ $V_r^{\da}$).
The endpoint-swapping map $\sigma$ and the size formula $\delta$ naturally extend to $X_l^{\da} \sqcup V_l^{\da}$.

Now we define an auxiliary  set $\tB_{n-k,k}$ containing $B_{n-k,k}$ such that the cups and rays in any $a \in \tB_{n-k,k}$ can be decorated by markers.
A marked cup diagram on $m$ vertices 
is obtained from folding a centro-symmetric cup diagram on $n = 2m$ vertices in the following sense.


\alg\label{alg:fold}
Given $a\in \tB_{n-k,k}$ that has cups crossing the axis of reflection, in the following we demonstrate how to produce two diagrams $a', a^-  \in \tB_{n-k,k}$. 
\begin{enumerate}
\item If $a$ has exactly one cup connecting that crosses the axis of reflection, set $a'$ to be the diagram obtained from $a$ by replacing the cup by two (unmarked) rays connected to the endpoints of the cup, respectively; while $a^-$ is obtained similarly but with markers.
\[
\begin{tikzpicture}[baseline={(0,-.3)}]
\draw (-.5,0) -- (3.5,0) -- (3.5,-1.8) -- (-.5,-1.8) -- cycle;
\draw[dashed] (1.5,0)--(1.5,-1.8);
\begin{footnotesize}
\node at (0.5,.2) {$i+1$};
\node at (2.5,.2) {$n-i$};
\node at (1.5, -2) {$a$};
\end{footnotesize}
\draw[thick] (0.5,0) .. controls +(0,-2) and +(0,-2) .. +(2,0);
\end{tikzpicture}
\quad
\Rightarrow
\quad
\begin{tikzpicture}[baseline={(0,-.3)}]
\draw (-.5,0) -- (3.5,0) -- (3.5,-1.8) -- (-.5,-1.8) -- cycle;
\draw[dashed] (1.5,0)--(1.5,-1.8);
\begin{footnotesize}
\node at (0.5,.2) {$i+1$};
\node at (2.5,.2) {$n-i$};
\node at (.5, -0.8) {$\blacksquare$};
\node at (2.5, -0.8) {$\blacksquare$};
\node at (1.5, -2) {$a^-$};
\end{footnotesize}
\draw[thick] (0.5,0) -- (0.5, -1.8);
\draw[thick] (2.5,0) -- (2.5, -1.8);
\end{tikzpicture}
\quad
\begin{tikzpicture}[baseline={(0,-.3)}]
\draw (-.5,0) -- (3.5,0) -- (3.5,-1.8) -- (-.5,-1.8) -- cycle;
\draw[dashed] (1.5,0)--(1.5,-1.8);
\begin{footnotesize}
\node at (0.5,.2) {$i+1$};
\node at (2.5,.2) {$n-i$};
\node at (1.5, -2) {$a'$};
\end{footnotesize}
\draw[thick] (0.5,0) -- (0.5, -1.8);
\draw[thick] (2.5,0) -- (2.5, -1.8);
\end{tikzpicture}
\]
\item Otherwise, set $a'$ to be the diagram obtained from $a$ by replacing the innermost two (nested) cups that cross the axis by two side-by-side cups without markers; while $a^-$ is obtained similarly but with markers. 
\[
\begin{tikzpicture}[baseline={(0,-.3)}]
\draw (-.5,0) -- (3.5,0) -- (3.5,-1.8) -- (-.5,-1.8) -- cycle;
\draw[dashed] (1.5,0)--(1.5,-1.8);
\begin{footnotesize}
\node at (0,.2) {$i+1$};
\node at (1,.2) {$j+1$};
\node at (2,.2) {$n-j$};
\node at (3,.2) {$n-i$};
\node at (1.5, -2) {$a$};
\end{footnotesize}
\draw[thick] (1,0) .. controls +(0,-1) and +(0,-1) .. +(1,0);
\draw[thick] (0,0) .. controls +(0,-2) and +(0,-2) .. +(3,0);
\end{tikzpicture}
\quad
\Rightarrow
\quad
\begin{tikzpicture}[baseline={(0,-.3)}]
\draw (-.5,0) -- (3.5,0) -- (3.5,-1.8) -- (-.5,-1.8) -- cycle;
\draw[dashed] (1.5,0)--(1.5,-1.8);
\begin{footnotesize}
\node at (0,.2) {$i+1$};
\node at (1,.2) {$j+1$};
\node at (2,.2) {$n-j$};
\node at (3,.2) {$n-i$};
\node at (.5, -0.8) {$\blacksquare$};
\node at (2.5, -0.8) {$\blacksquare$};
\node at (1.5, -2) {$a^-$};
\end{footnotesize}
\draw[thick] (0,0) .. controls +(0,-1) and +(0,-1) .. +(1,0);
\draw[thick] (2,0) .. controls +(0,-1) and +(0,-1) .. +(1,0);
\end{tikzpicture}
\quad
\begin{tikzpicture}[baseline={(0,-.3)}]
\draw (-.5,0) -- (3.5,0) -- (3.5,-1.8) -- (-.5,-1.8) -- cycle;
\draw[dashed] (1.5,0)--(1.5,-1.8);
\begin{footnotesize}
\node at (0,.2) {$i+1$};
\node at (1,.2) {$j+1$};
\node at (2,.2) {$n-j$};
\node at (3,.2) {$n-i$};
\node at (1.5, -2) {$a'$};
\end{footnotesize}
\draw[thick] (0,0) .. controls +(0,-1) and +(0,-1) .. +(1,0);
\draw[thick] (2,0) .. controls +(0,-1) and +(0,-1) .. +(1,0);
\end{tikzpicture}
\]
\end{enumerate}
\endalg
\begin{definition}\label{def:unfold}
Let $\leq$ be the partial order on $\tB_{n-k,k}$ induced by $a' \leq a$ and $a^- \leq a$ using transitivity and reflexivity for all $a \in \tB_{n-k,k}$ and $a', a^-$ as in Algorithm~\ref{alg:fold}.
For a marked cup diagram $\da$, let $\ddot{a} \in \tB_{n-k,k}$ be the diagram obtained by placing the mirror image of $\da$ to the right of $\da$.
A diagram $\da$ is said to {\em unfold} to $b \in B_{n-k,k}$ if $\ddot{a} \leq b$.
\end{definition}

We write $\BD_{n-k,k}$ to denote the set of all marked cup diagrams on $m$ vertices with exactly $\lfloor\frac{k}{2}\rfloor$ cups.

\end{definition}  

\begin{ex}
Below we describe in below all six marked cup diagrams $\da_i (1\leq i \leq 6)$ in $\BD_{3,3}$ having $\lfloor\frac{3}{2}\rfloor = 1$ cup:
For the centro-symmetric cup $a_{135} = \{\{1,2\}, \{3,4\}, \{5,6\}\}$, we have
\[
a_{135} = \fcupd,
\quad
a'_{135} = \fcupdp = \ddot{a}_1,
\quad
a^-_{135} = \fcupdm = \ddot{a}_2,
\]
where
\[
\da_1 = \mcupa,
\quad
\da_2 = \mcupd.
\]
For $a_{124} = \{\{1,6\}, \{2,3\}, \{4,5\}\}$, we have
\[
a_{124} = \fcupf,
\quad
a'_{124} = \fcupfp = \ddot{a}_3,
\quad
a^-_{124} = \fcupfm = \ddot{a}_4,
\]
where
\[
\da_3 = \mcupb,
\quad
\da_4 = \mcupf.
\]
Finally, for $a_{123} = \{\{1,6\}, \{2,5\}, \{3,4\}\}$, we have
\[
\begin{split}
&a_{123} = \fcupc,
\quad
a'_{123} = a_{124},
\quad
a^-_{123} = \fcupcm,
\\
&(a^-_{123})'=\fcupcmp= \ddot{a}_5,
\quad
(a^-_{123})^- =\fcupcmm= \ddot{a}_6,
\end{split}
\]
where
\[
\da_5 = \mcupe,
\quad
\da_6 = \mcupc.
\]
In this case, $\da_3$ unfolds not only to $a_{124}$ but also to $a_{123}$ because $\ddot{a}_3 \leq a_{124} \leq a_{123}$.

Here we use dashed line as the right border to emphasize that it is the axis of reflection, as well as that the markers must be accessible from the dashed line by a path that does not intersect any other rays or cups.
\end{ex}
\prop
There exists a bijection between the irreducible components of the Springer fiber $\cBD_{n-k,k}$ and the set $\BD_{n-k,k}$ of marked cup diagrams.
\endprop
\proof
It follows from combining \cite[Lemma~5.12]{ES16}, \cite[II.9.8]{Spa82} 
and \cite[Lemmas~3.2.3, 3.3.3]{vL89}.
\endproof
\rmk
Note that people only knew there is a bijection. That being said, given an irreducible component $K$ of $\cBD_{n-k,k}$, it is unclear whether or not  there is the most natural marked cup diagram $\da$ assigned to this irreducible component $K$.
For the rest of the paper, we will construct a subvariety $K^{\da} \subseteq \cBD_{n-k,k}$
for each marked cup diagram $\da \in \BD_{n-k,k}$ (see \eqref{def:LDn-kk}), and prove that they are indeed irreducible components in Section~\ref{sec:irred}.
\endrmk
In the examples below we demonstrate a direct computation to determine irreducible components for the base case $\ld =(1,1)$ or $(2,2)$.  
\ex\label{ex:Ka11}
Let $n=2, k=1$. We fix a basis $\{e_1,f_1\}$ of $\CC^2$ so that $x = 0$. 
According to Algorithm~\ref{alg:fold}, the marked cup diagrams in $\BD_{1,1}$ all come from the folding of $a=\cupaaa$, and hence they are
\[
\da = \mcupaaa~,~ \dot{b} = \mcupaab.
\]
The (type A) irreducible component $K^a$ is the entire Springer fiber $\cB_{1,1} \simeq \{F_1 = \<\ld e_1+\mu f_1\>\}$. By imposing the isotropy condition $F_1^\perp = F_1$ with respect to the symmetric bilinear form $\beta_{1,1}$ satisfying that $\beta_{1,1}(e_1, f_1) = 1, \beta_{1,1}(e_1,e_1) = \beta_{1,1}(f_1,f_1)=0$, 
we see that
\[
0 = \beta_{1,1}(\ld e_1+ \mu f_1 , \ld e_1+ \mu f_1) = 2\ld\mu.
\]
Therefore, it is either $\ld=0$ or $\mu=0$. Hence, there are only two isotropic flags in $\cBD_{1,1}$:
\[
(0 \subset \<e_1\> \subset \CC^2)
,
\quad
(0 \subset \<f_1\> \subset \CC^2).
\] 
We shall denote $K^{\da} = \{(0 \subset \<e_1\> \subset \CC^2)\}$ since it satisfies the ray relation in type A plus that $\da$ is a type A ray. 
We then have no choice but to denote $K^{\dot{b}} = \{(0 \subset \<f_1\> \subset \CC^2)\}$.
We remark that the description of $F_1$ here eventually leads to the marked ray relation in  Theorem~\ref{thm:main2a}(iii). 
\endex
\ex\label{ex:Ka22}
Let $n=4, k=2$. We fix a basis $\{e_1, e_2, f_1, f_2\}$ of $\CC^4$ so that $x$ is determined by
$e_2 \mapsto e_1 \mapsto 0, f_2 \mapsto f_1 \mapsto 0$.
Define
\[
a_{12} = \cupaa~,~ a_{13}= \cupab~.
\]
According to Algorithm~\ref{alg:fold}, the corresponding marked cup diagrams are
\[
\da_{13} = \mcupab~,~\da_{12} = \mcupaa.
\]
The (type A) irreducible components in $\cB_{2,2}$ are 
\[
\begin{split}
K^{a_{13}} 
&= \{F_\bullet ~|~ x\inv F_0 = F_2, x\inv F_2 = F_4, \dim F_i = i\}
\\
&= \{(0 \subset F_1 \subset \<e_1, f_1\> \subset F_3 \subset \CC^4) ~|~ \dim F_1 = 1, \dim F_3=3\},
\\
K^{a_{12}} 
&= \{F_\bullet ~|~ x^{-2} F_0 = F_4, x\inv F_1 = F_3, \dim F_i = i\}
\\
&= \{ F_\bullet ~|~ x\inv F_1 = F_3, \dim F_i = i\}. 
\end{split}
\]
By imposing the isotropy condition with respect to the matrix
\[
\begin{blockarray}{ *{8}{c} }
& e_1 &   e_2 & f_1 & f_2\\
\begin{block}{ c @{\quad} ( @{\,} *{7}{c} @{\,} ) }
e_1 & 0 & 0 & 0& 1 \\
e_2 & 0 & 0 & -1 & 0   \\
f_1 & 0 & -1 & 0 &0\\
f_2 & 1 & 0 & 0 &0\\
\end{block}
\end{blockarray}~~~~~,
\]
we see that the isotropic flags in $\cBD_{2,2}$ os a disjoint union $K_1\sqcup K_2$, where
\eq\label{eq:K13}
\begin{split}
K_1
&=\{ (0 \subset \<\ld e_1 + \mu f_1\> \subset \<e_1, f_1\> \subset \<e_1,f_1, \ld e_2 + \mu f_2\>\subset \CC^4) \in \cBD_{2,2}\}
\\
&= \{F_\bullet \in \cBD_{2,2}~|~ x F_2 = 0\},
\end{split}
\endeq
\eq\label{eq:K12}
\begin{split}
K_2
&=\{(0 \subset \<\ld e_1 + \mu f_1\> \subset \<\ld e_1 + \mu f_1, \ld e_2 + \mu f_2\> \subset \<e_1,f_1, \ld e_2 + \mu f_2\>\subset \CC^4) \in \cBD_{2,2}\}
\\
&= \{ F_\bullet \in \cBD_{2,2}~|~ x F_2 = F_1\}.
\end{split}
\endeq
We assume for now that $K_1$ and $K_2$ are irreducible (a proof will be given in Lemma~\ref{lem:n<=4}).
Since the condition in $K_1$ coincides with the cup relation in type A, we call this irreducible component $K^{\da_{13}}$ because  $\da_{13}$ is a type A cup.
We then have no choice but to identify $K_2 \equiv K^{\da_{12}}$.
We remark that the new relation $x F_2 = F_1$ here eventually leads to the marked cup relation in Theorem~\ref{thm:main2a}(ii). 
\endex

\section{Fixed point subvarieties of quiver varieties}
\subsection{Automorphisms on quiver varieties}
In {the} literature the fixed point subvarieties of Nakajima's quiver varieties are studied  by Henderson--Licata in \cite{HL14} for the type A quivers associated with an explicit involution. 
For quivers associated with the symmetric pairs (or Satake diagrams), the corresponding fixed-point subvarieties  are studied by Li in \cite{Li19}. 
While it is difficult to compare the two automorphisms in an explicit way,
both automorphism restrict to an isomorphism between the fixed-point subvariety and a Slodowy variety of type D which is isomorphic to one of type C. 

In \cite{HL14}, Henderson--Licata consider any diagram automorphism $\theta$ which is an admissible automorphism in the sense of Lusztig, see \cite[\S 12.1.1]{L93}.
For type A quiver varieties, such a diagram automorphism defines a variety automorphism 
$\Theta = \Theta(\theta, \sigma_k): M_{n-k,k} \to M_{n-k,k}$,
which also depends on the choice of an involution $\sigma_k: D_k \to D_k$.
With a suitable choice of $\sigma_k$, there is an isomorphism $M^\Theta_{n-k,k} \simeq \tcSD_{n-k,k} \sqcup \tcSD_{n-k,k}$.

On the other hand, Li has constructed in \cite{Li19} a family of automorphisms which works in a more general scenario. The automorphism $\sigma = a \circ S_{\omega} \circ \tau$ is composed of a diagram automorphism $a$, a reflection functor $S_{\omega}$ for some Weyl group element $\omega$, and an isomorphism $\tau$ which deals with the so-called formed spaces which account for the orthogonality in our setup.
By choosing $a=1, \omega = w_\circ$, which is the longest element, he exhibits an isomorphism
$M^\sigma_{n-k,k} \simeq \tcS^{\mathfrak{o}}_{n-k,k} \equiv \tcSD_{n-k,k} \sqcup \tcSD_{n-k,k}$.

Moreover, in both cases the isomorphisms restrict to isomorphisms between the Springer fibers.
In this section we investigate to what extent they preserve the components of Springer fibers. 

\subsection{Henderson--Licata's fixed point subvarieties}
We use identifications $V_i \equiv V_{n-i}$ for all $i$ and $D_i \equiv D_{n-i}$ for all $i\neq k$ (they are referred {to} as isomorphisms $\varphi_i$ and $\sigma_i$ in \cite[\S 3.2]{HL14}).
For all $k$, let $\sigma_k$ be the automorphism on $D_k$ determined by 
\[
\sigma_k(e) = e,
\quad
\sigma_k(f) = \begin{cases}
\:\:\: f &\tif k \in2\ZZ, 
\\
-f&\tif k \in2\ZZ+1.
\end{cases}
\]
Define the $\Theta$-action by
\eq\label{eq:theta}
\begin{split}
&
\Theta(\Delta_i) = \Delta_{n-i},
\quad
\Theta(\Gamma_i) = \begin{cases}
\:\:\:  \Gamma_{n-i} &\tif i \neq k, 
\\
\Gamma_{k} \circ \sigma_k\inv &\tif i=k,
\end{cases}
\\
&
\Theta(A_i) = B_{n-1-i},
\quad
\Theta(B_i) = A_{n-1-i}.
\end{split}
\endeq
\rmk\label{rmk:fix} 
By \cite[\S 3.2]{HL14}, for all $s \in \Lambda^+_{n-k,k}$, $\Theta$ sends the orbit containing $s$ to the orbit containing $\Theta(s)$, or, $\Theta(p(s)) = p(\Theta(s))$, where $p$ is the projection map in \eqref{def:p}.  
Therefore, an element $[s]  = [(A,B,\Gamma, 0)] \in M^\cB_{n-k,k}$ is fixed under $\Theta$ if and only if there exists a $g = (g_i)_i \in \GL(V)$ such that
\eq\label{eq:fix}
A_i = g_{i+1} B_{n-1-i} g_i\inv,
\quad
B_i = g_i A_{n-1-i} g_{i+1}\inv,
\quad
\Gamma_i = 
\begin{cases}
g_i\Gamma_{n-i} &\tif i \neq k, 
\\
g_k\Gamma_{k} \sigma_k &\tif i=k.
\end{cases}
\endeq
\begin{prop}\label{prop:HL}
Let $\Theta$ be defined as in \eqref{eq:theta}, and let $n, k$ be such that $(n-k,k)$ is a type D partition (see \eqref{eq:partitionD}),
then
there is an isomorphism
$M^\Theta_{n-k,k} \simeq \tcSD_{n-k,k}\sqcup \tcSD_{n-k,k}$. 
\end{prop}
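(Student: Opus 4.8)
The plan is to transport the involution $\Theta$ through Maffei's isomorphism $\tphi_{n-k,k}\colon M_{n-k,k}\xrightarrow{\sim}\tcS_{n-k,k}$ of Proposition~\ref{prop:MS2} and to identify the resulting fixed-point locus with type D data. By Remark~\ref{rmk:fix} the assignment $s\mapsto\Theta(s)$ permutes $\GL(V)$-orbits, so $\Theta$ descends to an involution of $M_{n-k,k}$; the first task is to show that this involution corresponds under $\tphi_{n-k,k}$ to an explicit geometric involution $\vartheta$ of $\tcS_{n-k,k}$. The identifications $V_i\equiv V_{n-i}$ and $D_i\equiv D_{n-i}$ together with $\sigma_k$ package into a nondegenerate bilinear form on $\CC^n$, and the sign $\sigma_k(f)=(-1)^k f$ in \eqref{eq:theta} is precisely what forces this form to be \emph{symmetric} rather than alternating; one checks that it agrees (up to scaling) with the form $\beta_\ld$ of the type D shape \eqref{eq:beta}, which is where the hypothesis that $(n-k,k)$ be a type D partition \eqref{eq:partitionD} enters.

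Next I would pin down $\vartheta$ explicitly as $\vartheta(u,F_\bullet)=(-u^{\dagger},F_\bullet^{\perp})$, where $u^{\dagger}$ denotes the $\beta_\ld$-adjoint of $u$ and $(F^{\perp})_i=F_{n-i}^{\perp}$. The content here is an equivariance check: the swaps $\Theta(A_i)=B_{n-1-i}$, $\Theta(B_i)=A_{n-1-i}$ and $\Theta(\Gamma_i)=\Gamma_{n-i}$ (twisted by $\sigma_k$ at $i=k$) should transport the kernel description of the flag from Corollary~\ref{cor:Fi} to the $\beta_\ld$-orthogonal flag and send $x=\Delta_1\Gamma_1$ to $-x^{\dagger}$. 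Concretely this reduces to verifying, via the identifications $V_i\equiv V_{n-i}$, that $\tphi_{n-k,k}$ carries $\ker\tGa_{1\to i}$ to $(\ker\tGa_{1\to n-i})^{\perp}$ for all $i$. Some care is needed to equip $\tDe$ (equivalently $\Delta_k$) with the correct $\sigma_k$-twist so that $\vartheta$ is an honest involution; a good sanity check is the $n=2$ computation, where without the twist the fixed locus is asymmetric in $\langle e_1\rangle$ and $\langle f_1\rangle$.

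Granting the equivariance, the fixed locus is immediate: a point $(u,F_\bullet)\in\tcS_{n-k,k}$ satisfies $\vartheta(u,F_\bullet)=(u,F_\bullet)$ if and only if $u=-u^{\dagger}$ and $F_i=F_{n-i}^{\perp}$ for all $i$. The first condition says exactly that $u\in\fso_n(\CC;\beta_\ld)$ (see \eqref{def:so}), and since $u$ is nilpotent of Jordan type $(n-k,k)$, a type D partition, it lies in $\cND$; the second says $F_\bullet\in\cBD$. Together with the transversality $[u-x,y]=0$ these are precisely the defining equations of the type D Slodowy variety, so $\tphi_{n-k,k}$ restricts to an isomorphism from $M^\Theta_{n-k,k}$ onto the $\vartheta$-fixed locus, consisting of an orthogonal nilpotent $u\in\cND$ together with an isotropic flag $F_\bullet\in\cBD$ subject to transversality.

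The remaining — and most delicate — point is to match this fixed locus with $\tcSD_{n-k,k}\sqcup\tcSD_{n-k,k}$, that is, to account for the two copies. The doubling is not an artifact: it originates in the intentional use of the full orthogonal group, so that $\cBD$ has two connected components (cf.\ the remark following the definition of $\cBD$), and the fixed locus splits accordingly into two disjoint pieces exchanged by the outer symmetry. I expect the bookkeeping of this splitting, together with the sign and twist conventions required to make $\vartheta$ a genuine involution, to be the main obstacle rather than the abstract fixed-point computation of the previous paragraph. I would reconcile the precise statement with the Henderson--Licata construction \cite[\S 3.2]{HL14} and verify it on the two base cases: $n=2$ (Example~\ref{ex:Ka11}), where $M^\Theta_{1,1}$ recovers exactly the two isotropic lines $\langle e_1\rangle,\langle f_1\rangle$, and $n=4$ (Example~\ref{ex:Ka22}), where the two disjoint pieces $K_1\sqcup K_2$ of $\cBD_{2,2}$ appear explicitly; these confirm both the identification of the fixed locus and the count of components.
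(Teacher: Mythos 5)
The paper offers no argument for this proposition at all: its proof is the single citation \cite[(5.2), Thm.~5.3]{HL14}. Your proposal is therefore not a variant of the paper's proof but a reconstruction of the cited result, and in outline it is the right reconstruction: transporting $\Theta$ through $\tphi_{n-k,k}$ to the geometric involution $(u,F_\bullet)\mapsto(-u^{\dagger},F_\bullet^{\perp})$ and reading off the fixed locus is how Henderson--Licata proceed, and your reduction of the equivariance to the statement that $\tphi$ carries $\ker\tGa_{1\to i}$ to $(\ker\tGa_{1\to n-i})^{\perp}$ is the correct one. You are also right that the twist on $\Delta_k$ is not optional: as printed, \eqref{eq:theta} does not even preserve the admissibility equation \eqref{eq:L1} off the locus $\Delta=0$ (already for $n=2$ that equation reads $\Gamma_1\Delta_1=0$, and $\Theta$ twists only $\Gamma_1$), so without corrected signs the equivariance you need is false. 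Keep in mind, though, that this equivariance check \emph{is} the technical content of the proposition; treating it as a verification is fine for a sketch, but it is the part that must actually be carried out.

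The genuine gap is your accounting of the two copies. Your fixed-point computation produces the pairs $(u,F_\bullet)$ with $u\in\cND$, $F_\bullet\in\cBD$ and $[u-x,y]=0$, which is exactly \emph{one} copy of $\tcSD_{n-k,k}$ as this paper defines it, because the paper's $\cBD$ already contains both connected components. Attributing the disjoint union $\tcSD\sqcup\tcSD$ to those two components is therefore circular: they sit inside the single copy you have produced and cannot furnish a second disjoint copy. Concretely, for $(n-k,k)=(1,1)$ one has $M_{1,1}\cong\tcS_{1,1}=T^*\CP^1$ and the paper's $\tcSD_{1,1}$ is the two points of Example~\ref{ex:Ka11}; your fixed locus is exactly those two points, whereas the proposition read with the paper's conventions would demand four --- impossible, since any involution of $T^*\CP^1$ preserves the unique compact curve (the zero section), and a Lefschetz count then forces the fixed locus to have compactly supported Euler characteristic $2$. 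The resolution is that in Proposition~\ref{prop:HL}, as in \cite{HL14}, each $\tcSD$ must be read as the \emph{connected} ($SO_n$-flag) Slodowy variety, i.e.\ half of the paper's $\tcSD$; with that reading your identification of the two copies with the two components of $\cBD$ becomes correct, but it needs to be said explicitly, and note that your proposed $n=4$ sanity check ($K_1\sqcup K_2$) again only verifies one copy of the paper's $\tcSD_{2,2}$. If you want the splitting intrinsically on the quiver side, the way \cite{HL14} organize it, it comes from a locally constant invariant of fixed points: for stable $s$ with $[s]$ fixed, the unique $g\in\GL(V)$ with $\Theta(s)=g\cdot s$ satisfies $\theta(g)g=\id$, so its middle component $g_m$ is an involution of $V_m$ whose conjugacy class separates the pieces --- visible already in your $n=2$ check, where the two fixed points carry $g=+1$ and $g=-1$.
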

\proof
See \cite[(5.2), Thm.~5.3]{HL14}.
\endproof

\begin{remark}
In \cite[Thm.~5.3]{HL14}, it is also given an isomorphism for the type C Slodowy's variety if the condition \cite[(5.1)]{HL14} is satisfied. 
Here, we only discuss {the} type D result. 
\end{remark}

\subsection{Two Jordan blocks of equal size}
The purpose of this subsection is to obtain a potential marked cup relation by working out a small rank example using fixed point subvarieties.
With type D in mind, we need $n=2k$. Furthermore, 
the action of $\Theta$ on the quiver representations in $\Lambda^\cB_{k,k}$ can be visualized as: 
\[ 
\xymatrix@C=18pt@R=9pt{
&
& &    
\ar[dd]_<(0.2){\Gamma_k}
D_k
& 
\\ & 
\\ 
&
{V_1}  
	\ar@/^/[r]^{A_1} 
	& 
	\ar@/^/[l]^{B_1}
	\cdots
	\ar@/^/[r]^{A_{k-1}} 
	&   
	\ar@/^/[l]^{B_{k-1}} 
{V_k}  
	\ar@/^/[r]^{A_{k}}
	&   
	\ar@/^/[l]^{B_k}
	\cdots
	\ar@/^/[r]^{A_{2k-2}}
	&
	\ar@/^/[l]
{V_{2k-1}} 
\ar@/^/[l]^{B_{2k-2}}  
}
\overset{\Theta}{\mapsto}
\xymatrix@C=18pt@R=9pt{
& &    
\ar[dd]_<(0.2){\Gamma_k}
D_k
  &
\\  
\\ 
{V_{2k-1}}  
	\ar@/^/[r]^{B_{2k-2}} 
	& 
	\ar@/^/[l]^{A_{2k-2}}
	\cdots
	\ar@/^/[r]^{B_{k}} 
	&   
	\ar@/^/[l]^{A_{k}} 
{V_k}  
	\ar@/^/[r]^{B_{k-1}}
	&   
	\ar@/^/[l]^{A_{k-1}}
	\cdots
	\ar@/^/[r]^{B_{1}}
	&
	\ar@/^/[l]
{V_{1}.} 
\ar@/^/[l]^{A_{1}}  
}
\] 
Note that by Corollary~\ref{cor:Spr} all $\Delta$ maps are zero, and hence omitted in our notation/picture.
In this section we demonstrate how we obtained the marked cup relation by working out a few non-trivial examples.
\ex
Let $n=4=2k$.
Consider $\Lambda^\cB_{2,2} = \{ (A,B,\Gamma,\Delta) \in \Lambda^+_{2,2} \mid \Delta = 0\}$, where the quiver representations are described below: 

\begin{equation}\label{eq:22-quiver} 
\xymatrix@C=18pt@R=9pt{
&
\ar@/_/[dd]_<(0.2){\Gamma_2}
D_2 
&   
\\   
& &    
\\ 
V_1 
	\ar@/^/[r]^{A_1}
	& 
	\ar@/^/[l]^{B_1} 
V_2  
	\ar@/_/[uu]_>(0.8){\Delta_2=0} 
	\ar@/^/[r]^{A_{2}}
	& 
	\ar@/^/[l]^{B_2}
V_3. 
	}
\end{equation} 
Define
\[
a_{12} = \cupaa~,~ a_{13}= \cupab~,~
\da_{12} = \mcupaa~,~\da_{13} = \mcupab~.
\]
By \eqref{defi:Springer_component_in_quiver}, we have
\begin{align}
\Lambda^{a_{12}}&=\{(A,B,\Gamma,0) \in \Lambda^+_{2,2}\mid \ker B_1 = \ker A_2\},
\\
\Lambda^{a_{13}}&=\{(A,B,\Gamma,0) \in \Lambda^+_{2,2}\mid A_1 = 0,  B_2 = 0\}.
\end{align}
By Remark~\ref{rmk:fix}, the fixed-points in $(M^{a_{12}})^\Theta$ 
and $(M^{a_{13}})^\Theta$
are described below: $[s] \in (M^{a_{12}})^\Theta$ if and only if
\eq
\begin{split}
&s=(A,B,\Gamma,0) \in \Lambda^{+}_{2,2},
\quad \ker B_1 = \ker A_2, \textup{ there is a }g=(g_i)_i \in \GL(V) \textup{ such that }
\\
&
\Gamma_2 = g_2 \Gamma_2,
\quad
A_1 = g_2 B_2 g_1\inv,
\quad
A_2 = g_3 B_1 g_2\inv,
\quad
B_1 =g_1 A_2 g_2\inv,
\quad 
B_2 = g_2 A_1 g_3\inv,
\end{split}
\endeq
while $[s] \in (M^{a_{13}})^\Theta$ if and only if
\eq
\begin{split}
s &\in \Lambda^{+}_{2,2},
\quad
A_1 = 0 = B_2,
\quad
\textup{ there is a }g \in \GL(V) \textup{ such that }
\\
\Gamma_2 &= g_2 \Gamma_2,
\quad
A_2 = g_3 B_1 g_2\inv,
\quad
B_1 =g_1 A_2 g_2\inv.
\end{split}
\endeq
Using Proposition~\ref{prop:MS}, the component 
$K^{\da_{13}}= \tphi(p((\Lambda^{a_{13}})^\Theta))$  
consists of pairs $(x, F_\bullet)$,
where $F_i = \ker \tGa_{1\to i}$, and
\[
x = \tDe_1\tGa_1 = \begin{pmatrix}0&0&1&0 \\ 0&0&0&1 \\ 0&0&0&0\\0&0&0&0\end{pmatrix}.
\]
From Example~\ref{ex:Fi}, the isotropic flag $F_\bullet$ is described by,
\eq
F_1 =  \ker (B_1 \Gamma_2 ), 
\quad
F_2 =  \ker (A_1B_1 \Gamma_2|  \Gamma_2), 
\quad
F_3  = \ker (A_2A_1B_1 \Gamma_2 |  A_2\Gamma_2). 
\endeq
Since $F_1$ is a 1-dimensional space which is killed by $x$, it must be spanned by a nonzero vector of the form $\ld e_1 + \mu f_1$ for some $\ld, \mu \in \CC$.  
For $F_3$, we have $\ker A_2 A_1 B_1 \Gamma_2 = \<e,f\>$ since the admissibility conditions imply that
\eq
A_2 (A_1 B_1) \Gamma_2
=
(A_2 B_2) A_2 \Gamma_2
= 0.
\endeq
Since the first block is a zero map, the space $\ker ( 0 |  A_2\Gamma_2)$ is a direct sum
$\<e_1, f_1\> \oplus \iota_2(\ker A_2\Gamma_2)$, where $\iota_t$ is the subscript-decorating linear map sending $e, f$ to $e_t, f_t$, respectively.
Since $A_2 = g_3B_1 g_2\inv$ and $\Gamma_2 = g_2 \Gamma_2$,
we have 
\eq
\ker A_2\Gamma_2 = \ker g_3 B_1\Gamma_2 = \ker B_1\Gamma_2 = \<\ld e + \mu f\>,
\endeq 
and hence
$F_3 = \<e_1, f_1, \ld e_2+ \mu f_2\>$.

For $F_2$, there are two cases:
$A_1 = B_2=0$ if $p_{2,2}(A,B,\Gamma,0)\in (M^{a_{13}})^\Theta$, or 
$A_1 = g_2 B_2 g_3\inv \neq 0$ if $p_{2,2}(A,B,\Gamma,0) \in (M^{a_{12}})^\Theta \setminus (M^{a_{13}})^\Theta$.
If $A_1 = 0$, then $\ker A_1B_1\Gamma_2 = \<e,f\>$ and hence $F_2 = \<e_1, f_1\>$.

If $A_1 \neq 0$, then $A_1$ is injective since its domain is one-dimensional, and hence 
\eq
\ker A_1B_1\Gamma_2 
= \ker B_1\Gamma_2
= \ker (g_1 A_2 g_2\inv) g_2 \Gamma_2
= \ker A_2\Gamma_2 
= \<\ld e + \mu f\>.
\endeq
Moreover, we have $\ker\Gamma_2 = \ker A_1 B_1 \Gamma_2$
since $\ker \Gamma_2 \subset \ker A_1 B_1 \Gamma_2$ and $\dim\ker \Gamma_2 = \dim\ker A_1 B_1 \Gamma_2 =1$.
Therefore, $F_2 = \<\ld e_1 + \mu f_1, \ld e_2 + \mu f_2\>$.

Comparing this example with Example~\ref{ex:Ka22}, we see that
\eq
\tphi((M^{a_{13}})^\Theta) = K^{\da_{13}},
\quad
\tphi((M^{a_{12}})^\Theta) = K^{\da_{13}}
\sqcup
K^{\da_{12}}.
\endeq
Here we label $K^{\da_{13}}$ by a{n} unmarked cup since it is characterized by
\eq
A_1 = B_2 = 0 \Leftrightarrow \ker B_2 = \ker A_3,
\endeq
which is the type A cup relation connecting the vertices 2 and 3, while $K^{\da_{12}}$ is characterized by the relation
\eq
A_1, B_2 \neq 0 \Leftrightarrow \ker B_1\Gamma_2 = \ker A_1B_1 \Gamma_2.
\endeq
\endex
\subsection{Two Jordan blocks of unequal sizes}
The purpose of this subsection is to obtain a potential marked ray relation by working out a small rank example using fixed point subvarieties.
Thanks to Corollary~\ref{cor:Spr}, the quiver representations in $\Lambda^\cB_{n-k,k}$ are of the form
\[ 
\xymatrix@C=18pt@R=9pt{
\dim D_i&0&0&\cdots &1&0&\cdots &1&\cdots &0&0
\\
&
& &  &   
\ar[dd]_<(0.2){\Gamma_k}
D_k
& &  &  
\ar[dd]_<(0.2){\Gamma_{n-k}}
D_{n-k}
 & & &&
\\ & & & & & & &  && 
\\ 
&
{V_1}  
	\ar@/^/[r]^{A_1} 
	& 
	\ar@/^/[l]^{B_1}
{V_2}
	\ar@/^/[r]^{A_2}
	&
	\ar@/^/[l]^{B_2}
	\cdots   
	\ar@/^/[r]
	& 
	\ar@/^/[l]
{V_k}  
	\ar@/^/[r]^{A_{k}}
	&   
{V_{k+1}}  
	\ar@/^/[l]^{B_k}
	\ar@/^/[r]
	&
	\ar@/^/[l]
	\cdots
	\ar@/^/[r]
	&  
{V_{n-k}} 
	\ar@/^/[r]^{A_{n-k}}
	\ar@/^/[l]
	& 
	\ar@/^/[l]^{B_{n-k}}
	\cdots
	\ar@/^/[r]
	&
	\ar@/^/[l]
{V_{n-2}}  
	\ar@/^/[r]^{A_{n-2}}
	& 
{V_{n-1}.} 
\ar@/^/[l]^{B_{n-2}}  
\\
\dim V_i & 1&2&\ldots&k&k&\ldots&k&\ldots&2&1
}
\] 
Below we demonstrate how the leftmost ray is characterized by working out the case $(3,1)$.

\ex\label{ex:31}
Let $n=4, k=1$.
Consider $\Lambda^\cB_{3,1} = \{ (A,B,\Gamma,\Delta) \in \Lambda^+_{3,1} \mid \Delta = 0\}$, where the quiver representations are described below (with fixed basis elements $v_1, w_i, v_3, e, f$):

\begin{equation}\label{eq:22-quiver} 
\xymatrix@C=18pt@R=9pt{
\ar[dd]_<(0.2){\Gamma_1}
\<f\>
&
&   
\ar[dd]^<(0.2){\Gamma_3.}
\<e\>
\\   
& &    
\\ 
\CC
	\ar@/^/[r]^{A_1}
	& 
	\ar@/^/[l]^{B_1} 
\CC 
	\ar@/^/[r]^{A_{2}}
	& 
	\ar@/^/[l]^{B_2}
\CC
	}
\end{equation} 
Define
\[
a_{1} = \cupfa~,~ a_{2}= \cupfb~,~ a_3 = \cupfc.
\]
By \eqref{defi:Springer_component_in_quiver}, we have
\begin{align}
\Lambda^{a_{1}}&=\{(A,B,\Gamma,0) \in \Lambda^+_{3,1}\mid 0 =  A_1\},
\\
\Lambda^{a_{2}}&=\{(A,B,\Gamma,0) \in \Lambda^+_{3,1}\mid \ker B_1 = \ker A_2\},
\\
\Lambda^{a_{3}}&=\{(A,B,\Gamma,0) \in \Lambda^+_{3,1}\mid B_2 = 0\}.
\end{align}
By \eqref{eq:fix}, the fixed-point subvarieties $(M^{a_{i}})^\Theta$ 
are described below: $[s] \in (M^{a_{1}})^\Theta = (M^{a_{3}})^\Theta$ if and only if
\eq
\begin{split}
&s=(A,B,\Gamma,0) \in \Lambda^{+}_{3,1},
\quad 
A_1 = 0 = B_2, 
\textup{ there is a }g=(g_i)_i \in \GL(V) \textup{ such that }
\\
&\Gamma_1 = g_1 \Gamma_3,
\quad
\Gamma_3 = g_3 \Gamma_1,
\quad
A_2 = g_3 B_1 g_2\inv,
\quad
B_1 =g_1 A_2 g_2\inv.
\end{split}
\endeq
Since $A_1 = 0 = B_2$, the stability condition at $V_2$ is never satisfied and hence $(M^{a_1})^\Theta = \varnothing = (M^{a_3})^\Theta$.
Meanwhile, $[s] \in (M^{a_{2}})^\Theta$ if and only if
\eq\label{eq:|U|}
\begin{split}
&s=(A,B,\Gamma,0) \in \Lambda^{+}_{3,1},
\quad \ker B_1 = \ker A_2, \textup{ there is a }g=(g_i)_i \in \GL(V) \textup{ such that }
\\
&\Gamma_1 = g_1 \Gamma_3,
\quad
\Gamma_3 = g_3 \Gamma_1,
\quad
A_1 = g_2 B_2 g_1\inv,
\quad
A_2 = g_3 B_1 g_2\inv,
\quad
B_1 =g_1 A_2 g_2\inv,
\quad 
B_2 = g_2 A_1 g_3\inv.
\end{split}
\endeq
From Example~\ref{ex:Fi}, the isotropic flag $F_\bullet$ is described by,
\eq
\begin{split}
&F_1 =  \ker (\Gamma_1| B_1B_2\Gamma_3),
\\
&F_2 =  \ker (A_1 \Gamma_1|  A_1B_1B_2\Gamma_3| B_2\Gamma_3), 
\\
&F_3  = \ker (A_2A_1\Gamma_1 |  A_2A_1B_1B_2\Gamma_3| A_2B_2\Gamma_3|\Gamma_3).
\end{split}
\endeq
For $F_1$, by the stability condition on $V_1$, we have $\Gamma_1$ is surjective and so $\ker \Gamma_1 = 0$, while the cup relation $\ker B_1 = \ker A_2$ implies that
\eq
\ker B_1 B_2 \Gamma_3 = \ker A_2 B_2 \Gamma_3 = \ker 0 = \<e\>,
\endeq
and hence $F_1 = \<e_1\>$. For $F_3$, by a similar argument we see that $\ker A_2 A_1 \Gamma_1 = \ker B_1 A_1 \Gamma_1 = \ker 0 = \<f\>$, $\ker \Gamma_3 = 0$, and hence
$F_3 = \<f_1,e_1,e_2\>$.

For $F_2$, we have
\eq
\ker A_1\Gamma_1 = \ker (g_2 B_2 g_1\inv) g_1 \Gamma_3 = \ker B_2 \Gamma_3.
\endeq
By the dimension reason, $\ker A_1\Gamma_1 = \ker B_2\Gamma_3 = 0$, while $\ker (A_1\Gamma_1 | B_2\Gamma_3)$ is 1-dimensional, and is spanned by the vector $f + \ld e$ for some $\ld \neq 0$.

By \eqref{eq:|U|}, we have
\eq
\Gamma_1 = g_1\Gamma_3 = g_1g_3\Gamma_1,
\quad
\Gamma_3 = g_3\Gamma_1 = g_3 g_1 \Gamma_3,
\endeq
and hence $g_1, g_3 \in \CC^\times$ are inverses to each other. 
Moreover,
\eq
A_1 = g_2 B_2 g_1\inv = g_2(g_2 A_1 g_3\inv) g_1\inv = g_2^2 A_1,
\endeq
and hence $g_2 \in \CC^\times$ is an involution, or $g_2 = \pm1$. Then
\eq
0 = A_1\Gamma_1(f) + \ld B_2\Gamma_3(e)
= g_2  B_2 \Gamma_3(e) + \ld B_2 \Gamma_3(e).
\endeq
That is, $-\ld$ is an eigenvalue of $g_2$ and so $\ld \in \{\pm1\}$. Therefore, 
$\tphi((M^{\da_2})^\Theta)$ splits into the following two connected components:
\eq
\begin{split}
&\{(0 \subset \<e_1\> \subset \<e_1, f_1 + e_2\> \subset \<f_1, e_1, e_2\> \subset \CC^4)\},
\\
&\{(0 \subset \<e_1\> \subset \<e_1, f_1 - e_2\> \subset \<f_1, e_1, e_2\> \subset \CC^4)\}.
\end{split}
\endeq
\endex
\section{Components of Springer fibers of type D}
\subsection{The branching rule}
Given a type D cup diagram $\da\in \BD_{n-k,k}$, if $i$ is the left endpoint of a cup, i.e., $i \in V_l^{\da} \cup X_l^{\da}$, set
\eq\textstyle
m(i) = i + \delta(i) -1 = \left\lfloor \frac{i+\sigma(i)}{2}\right\rfloor.
\endeq
Let
$\Lambda^{\da}$ be the subset of $\Lambda^\cB_{n-k,k}$ consisting of quadruples $(A,B,\Gamma,0)$ satisfying the following relations, if $n-k = k$:
\eq\label{def:LDkk}
\begin{split}
&\ker B_{m(i) \to i-1} 
= \ker A_{m(i) \to \sigma(i)} 
\tforall i\in V_l^a,
\\
&\ker B_{m(i) \to i-1} 
\neq \ker A_{m(i) \to \sigma(i)} 
\tforall i\in X_l^a,
\\
&A_{\frac{i+1}{2}\to i}\Gamma_{k \to \frac{i+1}{2}} = A_{\frac{i+1}{2}\to i}\Gamma_{k \to \frac{i+1}{2}}\sigma_k
\quad\tif i \textup{ is connected to a marked ray},
\\
&A_{\frac{i+1}{2}\to i}\Gamma_{k \to \frac{i+1}{2}} = -A_{\frac{i+1}{2}\to i}\Gamma_{k \to \frac{i+1}{2}}\sigma_k
\quad\tif i \textup{ is connected to an unmarked ray}, 
\end{split}
\endeq
while when $n-k > k$, the last two relations in \eqref{def:LDkk} are replaced by the following:
\eq\label{def:LDn-kk}
\begin{split}
&A_{k \to k+\rho(i)-1}\Gamma_{k}
=
-B_{n-k \to k+\rho(i)-1}\Gamma_{n-k}
\quad\tif i \textup{ is connected to the rightmost ray, and it's marked},
\\
&A_{k \to k+\rho(i)-1}\Gamma_{k}
=
B_{n-k \to k+\rho(i)-1}\Gamma_{n-k}
\quad\tif i \textup{ is connected to the rightmost ray, and it's unmarked}.
\end{split}
\endeq
We further define
\eq\label{def:MD}
M^{\da} = p_{n-k,k}(\Lambda^{\da}), 
\quad
K^{\da} = \tphi(M^{\da}).
\endeq
Note that $M^{\da}$ also makes sense for $\da \in \tB_{n-k,k}$ (see Section \ref{sec:mcup}).
The rest of the section is dedicated to the proof of the following theorem:
\begin{thm}\label{thm:mainDQ}
Let $a \in B_{n-k,k}$ be a type A cup diagram.
\begin{enumerate}[(a)]
\item If $a$ is symmetric (with respect to the axis of reflection),
then
\[
(M^a)^\Theta = \sqcup_{\da} M^{\da}, 
\] 
where $\da$ runs over all type D cup diagrams in $\BD_{n-k,k}$ which unfold to $a$ in the sense of Definition~\ref{def:unfold}.
\item If $a$ is not symmetric, then $(M^a)^\Theta \subseteq (M^b)^\Theta$ for some symmetric $b \in B_{n-k,k}$.
\end{enumerate}
As a consequence, 
\[
M_{n-k,k}^\Theta = \bigcup_{\da \in \BD_{n-k,k}} M^{\da}.
\]
\end{thm}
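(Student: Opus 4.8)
The plan is to establish parts (a) and (b) and then deduce the concluding identity formally. Granting the two parts, I take $\Theta$-fixed loci in the decomposition $\bigcup_{a\in B_{n-k,k}} M^a$ supplied by Theorem~\ref{thm:main1}. Applying $\Theta$ to the defining kernel relations of $\Lambda^a$ turns them into those of $\Lambda^{\bar a}$, where $\bar a$ denotes the reflection of $a$ across the axis; hence $\Theta(M^a)=M^{\bar a}$ and $\Theta$ permutes the $M^a$, so that taking fixed points commutes with the finite union:
\[
\Big(\bigcup_{a} M^a\Big)^{\Theta} = \bigcup_{a}(M^a)^{\Theta}.
\]
Splitting the right-hand side into symmetric and non-symmetric $a$: part (b) shows each non-symmetric term lies in a symmetric one and so contributes nothing new, while part (a) rewrites each symmetric term as the disjoint union $\sqcup_{\da} M^{\da}$ over marked diagrams $\da$ unfolding to $a$. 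Since every $\da\in\BD_{n-k,k}$ unfolds to at least one symmetric $b\in B_{n-k,k}$ (namely a symmetric diagram obtained by maximally reconnecting the folded features of $\ddot a$), letting $a$ range over symmetric diagrams makes $\da$ range over all of $\BD_{n-k,k}$, yielding $M_{n-k,k}^{\Theta}=\bigcup_{\da}M^{\da}$.

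The heart of the argument is part (a). Fixing a symmetric $a$ and a class $[s]=[(A,B,\Gamma,0)]\in(M^a)^{\Theta}$, Corollary~\ref{cor:Spr} gives $\Delta=0$, and the fixed-point criterion \eqref{eq:fix} provides a gauge $g=(g_i)_i\in\GL(V)$ intertwining $(A,B,\Gamma)$ with its $\Theta$-image. Using Lemma~\ref{lem:hell} and Corollary~\ref{cor:Fi} I would write the attached isotropic flag $F_\bullet$ as explicit kernels and then translate the existence of $g$ into the marked relations. The cups of $a$ occur either as mirror-pairs of cups disjoint from the axis or as nested cups (or a single cup) crossing it; folding (Algorithm~\ref{alg:fold}) replaces the crossing features by side-by-side cups or by rays. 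For each such feature the relations $A_i=g_{i+1}B_{n-1-i}g_i\inv$ and $\Gamma_i=g_i\Gamma_{n-i}$ force a compatibility which, after extracting the involutivity $g_i^2=\id$ of the relevant gauge component exactly as in Example~\ref{ex:31}, bifurcates into two mutually exclusive cases recorded by a single sign (a $\pm1$ eigenvalue of an induced involution, equivalently the vanishing or non-vanishing of a connecting map). These two cases are precisely the marked versus unmarked choice, and they reproduce \eqref{def:LDkk} when $n=2k$ and \eqref{def:LDn-kk} when $n>2k$, as in the computations for $\cBD_{2,2}$ and $\cBD_{3,1}$. Ranging over the independent sign choices attached to the crossing features gives the disjoint decomposition $(M^a)^{\Theta}=\sqcup_{\da}M^{\da}$ indexed by the marked diagrams unfolding to $a$; disjointness holds because distinct sign patterns impose incompatible relations.

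For part (b) I would argue that a non-symmetric $a$ adds nothing. If $[s]\in(M^a)^{\Theta}$ then $[s]=\Theta[s]\in M^{\bar a}$, so $[s]\in M^a\cap M^{\bar a}$ with $a\neq\bar a$; thus $[s]$ satisfies the cup relations of both $a$ and $\bar a$. A combinatorial step on cup diagrams then yields a symmetric $b\in B_{n-k,k}$ whose cup relations are consequences of the combined relations of $a$ and $\bar a$ (equivalently $K^a\cap K^{\bar a}\subseteq K^b$); since $[s]$ is already $\Theta$-fixed, $[s]\in(M^b)^{\Theta}$. Frequently, as in Example~\ref{ex:31} where the stability condition \eqref{eq:L2} becomes incompatible with $g$, one even has $(M^a)^{\Theta}=\varnothing$ and the inclusion is trivial.

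The main obstacle is the translation in part (a): showing in full generality that the intertwining equations \eqref{eq:fix} force the auxiliary maps to be genuine involutions on the correct one-dimensional blocks, so that only the two signs $\pm1$ arise, and verifying that the sign bookkeeping over several nested crossing cups is independent and matches the unfolding combinatorics of Definition~\ref{def:unfold} without over- or under-counting the $\da$. Once the single-feature computation is pinned down as in the rank-one examples, the general case should follow by propagating the block formulas of Lemma~\ref{lem:hell} along the paths of Figure~\ref{fig:Ma}.
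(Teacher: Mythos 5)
Your proposal follows essentially the same route as the paper: your mirror-image equivalence of relations is the paper's Lemma~\ref{lem:Dbase1}, your sign bifurcation at features crossing the axis (extracting involutivity of the relevant gauge components, exactly as in Example~\ref{ex:31}) is the content of Lemma~\ref{lem:Dind}, and your treatment of non-symmetric $a$ by combining the relations of $a$ and its reflection into a symmetric $b$ under the fixed-point condition \eqref{eq:fix} is Corollary~\ref{cor:Dbase}(b). The only organizational difference is that the paper realizes your ``independent sign choices'' as an induction peeling off the innermost crossing cup(s) via Algorithm~\ref{alg:fold}, giving $(M^a)^\Theta=(M^{a'})^\Theta\sqcup(M^{a^-})^\Theta$ at each step, so the two arguments coincide in substance.
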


\begin{lemma}\label{lem:Dbase1}
Let $a \in \tB_{n-k,k}$.
If there is a marked cup, unmarked cup, or marked ray that does not cross the axis of reflection, then the relation it imposed in $(M^a)^\Theta$ is equivalent to the relation imposed by its mirror image.
\end{lemma}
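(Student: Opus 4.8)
The plan is to use the $\Theta$-fixed conditions \eqref{eq:fix} to manufacture an explicit linear intertwiner between the composite maps attached to a cup (or marked ray) and those attached to its mirror image, so that the relation and its mirror differ only by an invertible map and are therefore equivalent. First I would fix the reflection: placing the axis between vertices $m$ and $m+1$, reflection sends a vertex $v$ to $n+1-v$. A cup that does not cross the axis lies entirely on one side; if its left endpoint is $i$ (endpoints $i<\sigma(i)$ on the same side), its mirror image has left endpoint $i'=n+1-\sigma(i)$ and right endpoint $\sigma(i')=n+1-i$. A direct check gives $\delta(i')=\delta(i)$ and $m(i')=n-m(i)$, so the composites attached to the mirror cup are $B_{m(i')\to i'-1}$ and $A_{m(i')\to \sigma(i')}$.

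The heart of the argument is the pair of identities, valid whenever $(A,B,\Gamma,0)$ represents a point of $(M^a)^\Theta$, i.e.\ whenever there is $g=(g_i)_i\in\GL(V)$ satisfying \eqref{eq:fix}:
\[
A_{m(i')\to \sigma(i')} = g_{\sigma(i')}\, B_{m(i)\to i-1}\, g_{m(i')}\inv, \qquad B_{m(i')\to i'-1} = g_{i'-1}\, A_{m(i)\to \sigma(i)}\, g_{m(i')}\inv.
\]
Both follow by substituting $A_j=g_{j+1}B_{n-1-j}g_j\inv$ and $B_j=g_jA_{n-1-j}g_{j+1}\inv$ into the definitions of the composites and telescoping the intermediate $g$'s; the index identities $n-\sigma(i')=i-1$ and $n-1-m(i')=m(i)-1$ make the $B$-indices (resp.\ $A$-indices) on the right line up exactly with those of $B_{m(i)\to i-1}$ (resp.\ $A_{m(i)\to\sigma(i)}$).

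Since $g_{\sigma(i')},g_{i'-1},g_{m(i')}$ are invertible, these identities give $\ker A_{m(i')\to\sigma(i')}=g_{m(i')}\ker B_{m(i)\to i-1}$ and $\ker B_{m(i')\to i'-1}=g_{m(i')}\ker A_{m(i)\to\sigma(i)}$. Applying the single isomorphism $g_{m(i')}$ thus turns the original equality $\ker B_{m(i)\to i-1}=\ker A_{m(i)\to\sigma(i)}$ (the unmarked-cup relation of \eqref{defi:Springer_component_in_quiver}) into the mirror equality $\ker A_{m(i')\to\sigma(i')}=\ker B_{m(i')\to i'-1}$, and likewise carries the corresponding inequality (the marked-cup relation) to the mirror inequality; this settles both cup cases at once. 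For a marked ray I would run the same telescoping, now feeding in the $\Gamma$-part of \eqref{eq:fix}, namely $\Gamma_i=g_i\Gamma_{n-i}$ for $i\neq k$ and $\Gamma_k=g_k\Gamma_k\sigma_k$: expressing the (possibly $\sigma_k$-twisted) ray relation through composites of the form $A_{\bullet\to\bullet}\Gamma_\bullet$ and substituting \eqref{eq:fix} reduces it, after the same cancellation of $g$'s, to the mirror relation.

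The hard part will be the bookkeeping at the central node $k$ in the marked-ray case: the $\Theta$-action twists $\Gamma_k$ by $\sigma_k$ and treats $A,B$ asymmetrically at the middle, so one must verify that the sign/marker recorded by $\sigma_k$ is \emph{exactly} the one carried by the mirror ray rather than its opposite. The cup cases, by contrast, reduce cleanly to the telescoping identities above with no twist, and so present no obstacle.
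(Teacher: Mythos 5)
Your proposal is correct and takes essentially the same route as the paper's proof: both substitute the fixed-point equations \eqref{eq:fix} into the composite maps attached to a cup, telescope the intermediate $g_i$'s, and conclude that the kernel condition and its mirror differ only by the invertible map $g_{m(i')}$ (the paper runs the substitution from the original cup toward its mirror, you run it in the opposite direction, which is the same computation), and your index bookkeeping checks out. Your treatment of the marked-cup case (same identities carry the kernel inequality to the mirror inequality) and of the marked-ray case (same telescoping fed with the $\Gamma$-part of \eqref{eq:fix}, with care about $\sigma_k$ at the middle node) matches the paper, which disposes of those cases with a brief ``similarly.''
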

\proof
Assume that there is a (unmarked) cup connecting vertices $i, j$ on the left half of the diagram. The cup relation is $\ker A_{m(i) \to j} = \ker B_{m(i) \to i-1}$.
Using Remark~\ref{rmk:fix}, we obtain
\eq
\begin{split}
\ker A_{m(i)\to j} &= \ker A_{j-1} A_{j-2} \cdots A_{m(i)}
\\
&= \ker (g_j B_{n-j} g_{j-1}\inv)(g_{j-1} B_{n-j-1} g_{j-2}\inv)\cdots (g_{m(i)+1} B_{n-m(i)-1} g_{m(i)}\inv)
\\
&= \ker B_{n-m(i)\to n-j} g_{m(i)}\inv.
\end{split}
\endeq 
On the other hand, we obtain
\eq
\begin{split}
\ker B_{m(i)\to i-1} &= \ker B_{i-1} B_i \cdots B_{m(i)-1}
\\
&= \ker (g_{i-1} A_{n-i} g_{i}\inv)(g_{i} A_{n-i+1} g_{i+1}\inv)\cdots (g_{m(i)-1} A_{n-m(i)} g_{m(i)}\inv)
\\
&= \ker A_{n-m(i)\to n-i+1} g_{m(i)}\inv.
\end{split}
\endeq 
Hence, this cup relation is equivalent to another cup relation $\ker A_{n-m(i)\to n-i+1} = \ker B_{n-m(i)\to n-j}$ corresponding to the cup connecting vertices $n-j+1, n-i+1$, which is the mirror image of the original cup.
Similarly, a marked cup relation is equivalent to the marked cup relation for its mirror image; so is the relation imposed by the leftmost ray.
\endproof

\begin{cor}\label{cor:Dbase}
\begin{enumerate}[(a)]
\item
If $a\in \tB_{n-k,k}$ is symmetric (with respect to the axis of reflection) and has no cups that cross the axis, then $(M^a)^\Theta =M^{\da}$, where $\da\in \BD_{n-k,k}$ is the marked cup diagram obtained by cropping the right half of $a$.
\item
If $a \in B_{n-k,k}$ is not symmetric, then $(M^a)^\Theta \subseteq (M^b)^\Theta$ for some symmetric $b \in B_{n-k,k}$.
\end{enumerate}
\end{cor}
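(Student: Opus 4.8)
The plan is to deduce both parts from Lemma~\ref{lem:Dbase1} together with the $\Theta$-equivariance of the assignment $a \mapsto M^a$. First I would record the guiding observation: the automorphism $\Theta$ reflects the quiver, interchanging $A_i \leftrightarrow B_{n-1-i}$ and $\Gamma_i \leftrightarrow \Gamma_{n-i}$ up to the twist $\sigma_k$ as in \eqref{eq:theta}, so the same computation that proves Lemma~\ref{lem:Dbase1} shows that $\Theta$ carries the cup relations cutting out $M^a$ to those cutting out $M^{\bar a}$, where $\bar a$ denotes the mirror image of $a$ across the axis of reflection. Hence $\Theta(M^a) = M^{\bar a}$, and since a $\Theta$-fixed orbit $[s]$ satisfies $[s] = \Theta[s]$, it lies in both $M^a$ and $M^{\bar a}$; that is, $(M^a)^\Theta \subseteq M^a \cap M^{\bar a}$. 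This containment, the global form of the local mirror-equivalence in Lemma~\ref{lem:Dbase1}, drives the whole argument.

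For part (a), assume $a$ is symmetric with no cup crossing the axis, so $\bar a = a$ and every cup lies strictly in one half. By symmetry its cups (and any markers) occur in mirror pairs $\{c, \bar c\}$, and Lemma~\ref{lem:Dbase1} shows that on $(M^a)^\Theta$ the relation imposed by $\bar c$ coincides with the one imposed by $c$. Thus only the left-half relations survive, and these are exactly the (marked or unmarked) cup relations of \eqref{def:LDkk} for the cropped diagram $\da$ obtained by discarding the right half. What remains is to account for the vertices carrying rays: here I would evaluate the fixed-point conditions \eqref{eq:fix} from Remark~\ref{rmk:fix} directly on the ray data and verify that they collapse precisely to the marked/unmarked ray relations of \eqref{def:LDkk} (when $n-k=k$) or \eqref{def:LDn-kk} (when $n-k>k$); the small-rank computations in Example~\ref{ex:Ka22} and Example~\ref{ex:31} carry out exactly this reduction and serve as the base cases. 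Assembling the surviving cup relations with these ray relations reproduces the defining relations of $\Lambda^{\da}$, and therefore $(M^a)^\Theta = p_{n-k,k}(\Lambda^{\da}) = M^{\da}$.

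For part (b), assume $a \in B_{n-k,k}$ is not symmetric, so $\bar a \neq a$. The containment $(M^a)^\Theta \subseteq M^a \cap M^{\bar a}$ now forces the cup relations of both $a$ and $\bar a$ to hold simultaneously on $(M^a)^\Theta$, and Lemma~\ref{lem:Dbase1} lets me trade any off-axis relation for its mirror freely. I would then manufacture a symmetric $b \in B_{n-k,k}$ whose every cup relation already holds on $(M^a)^\Theta$: concretely, symmetrize $a$ by reflecting the cups lying in one fixed half, verify that the resulting arrangement is a genuine non-crossing cup diagram with the correct number $k$ of cups, and confirm via the mirror-equivalence that each relation of $b$ is among those forced on $(M^a)^\Theta$. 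Since imposing fewer relations enlarges the variety, this gives $(M^a)^\Theta \subseteq M^b$, and intersecting with the fixed-point locus (which already contains $(M^a)^\Theta$) yields $(M^a)^\Theta \subseteq (M^b)^\Theta$.

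The step I expect to be the main obstacle is the construction of $b$ in part (b): the reflected cups need not a priori assemble into a legal non-crossing diagram, since mirror cups may collide with axis-crossing cups of $a$ or violate planarity, so the symmetrization must be performed with care and one must check that every relation of the chosen $b$ is genuinely among those that Lemma~\ref{lem:Dbase1} and the fixed-point conditions force on $(M^a)^\Theta$. A secondary technical point, in part (a), is verifying in full generality, beyond the worked examples, that the fixed-point conditions \eqref{eq:fix} restricted to the ray vertices reduce exactly to the ray relations in \eqref{def:LDkk} and \eqref{def:LDn-kk}.
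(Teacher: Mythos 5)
Your part (a) follows the paper's route: the paper's entire proof of (a) is that it ``follows immediately from Lemma~\ref{lem:Dbase1}'', and your argument (mirror pairs of cups impose equivalent relations on the fixed locus, so only the left-half relations survive, plus a direct check that \eqref{eq:fix} yields the ray relations) is the same idea, if anything more detailed. Note only that, like the paper, you really argue just the inclusion $(M^a)^\Theta\subseteq M^{\da}$; the reverse inclusion (that points satisfying the relations of $\da$ are $\Theta$-fixed and satisfy the right-half relations) is taken for granted on both sides, so I do not count it against you.

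Part (b) is where your proposal has a genuine gap, and you flag it yourself without resolving it. Your plan is to ``symmetrize $a$ by reflecting the cups lying in one fixed half,'' but this does not produce an element of $B_{n-k,k}$: a non-symmetric $a$ can have cups crossing the axis, and these are lost under your operation, leaving fewer than $k$ cups; reflecting them instead destroys planarity, which is exactly the collision you worry about. The paper's own example after the corollary shows this: for $a=\{\{1,4\},\{2,3\},\{5,6\}\}\in B_{3,3}$, reflecting the left-half cup $\{2,3\}$ yields only $\{2,3\},\{4,5\}$, and one must still add the symmetric crossing cup $\{1,6\}$ to obtain $b=\{\{2,3\},\{4,5\},\{1,6\}\}$. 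The paper's proof supplies precisely the two ingredients your sketch is missing. First, the construction: choose $r\le\lfloor\frac{k}{2}\rfloor$ cups lying entirely in one half and pairwise non-mirror, take them together with their $r$ mirror images, and connect the remaining vertices (which occur in mirror pairs) by $k-2r$ nested symmetric cups crossing the axis, ordered from the center outward. Second, the justification for the added cups: their relations are \emph{not} among those forced by the relations of $a$ via Lemma~\ref{lem:Dbase1}; rather, under \eqref{eq:fix} the relation of a symmetric crossing cup $\{i,n+1-i\}$, namely $\ker B_{m\to i-1}=\ker A_{m\to n+1-i}$, becomes $g_m\left(\ker B_{m\to i-1}\right)=\ker B_{m\to i-1}$, i.e.\ a condition involving only the middle group element $g_m$, which is why it costs nothing on the fixed locus. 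Without this completion step and the $g_m$-observation there is no candidate $b$ at all, so the claimed containment $(M^a)^\Theta\subseteq (M^b)^\Theta$ never gets off the ground.
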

\proof
Part (a) follows immediately from Lemma~\ref{lem:Dbase1}.
For Part (b), pick up to $\lfloor\frac{k}{2}\rfloor$ cups $\{\{i_t, j_t\}\mid 1\leq t \leq r\}$ so that these cups that lie entirely on one half of $a$, and they are not mirror images of each other. 
By Lemma~\ref{lem:Dbase1} they impose relations that are equivalent to the ones imposed by the cups $\{\{n+1-j_t, n+1-i_t\}\mid 1\leq t \leq r\}$ assuming \eqref{eq:fix} holds.
Note that the remaining vertices $\{1,\ldots, n\} \setminus \{i_t,j_t, n+1-i_t, n+1-j_t\}_t$ show up in pairs. 
One can then define $b \in B_{n-k,k}$ using on the $2r$ cups altogether with $k-2r$ symmetric cups connecting the remaining vertices from center to outside.
Finally, using \eqref{eq:fix}, the symmetric cups added impose only relations on $g_m$ and hence $(M^a)^\Theta \subseteq (M^b)^\Theta$.
\endproof
\ex
Let $a = \cupfa \in B_{3,1}$.
Since $k = 1$, we cannot pick cups $\{i_t, j_t\}$ otherwise we have $2r > k$.
Hence, $b = \cupfb \in B_{3,1}$ and indeed we have $(M^a)^\Theta = \varnothing \subseteq (M^b)^\Theta$ as in Example~\ref{ex:31}.

If $a = \{\{1,4\}, \{2,3\}, \{5,6\}\} \in B_{3,3}$, we can choose $r=1$, $\{i_1, j_1\} = \{2,3\}$ with mirror image $\{4,5\}$. 
Finally we add enough cups to make sure $b\in B_{3,3}$ and so $b = \{\{2,3\}, \{4,5\}, \{1,6\}\}$.
Note that $b$ is not unique -- one can start with choosing $\{i_1, j_1\} = \{5,6\}$ and the resulting cup diagram is $b=\{\{1,2\},\{3,4\},\{5,6\}\}$.
\endex
\begin{lemma}\label{lem:Dind}
If $a \in \tB_{n-k,k}$ has cups that cross the axis of reflection, then
$(M^a)^\Theta = (M^{a'})^\Theta \sqcup (M^{a^-})^\Theta$. 
\end{lemma}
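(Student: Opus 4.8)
The plan is to localize the whole comparison to the innermost crossing configuration and to show that the fixity equations \eqref{eq:fix} turn the crossing-cup relation into a rigid condition that splits cleanly along the unmarked/marked dichotomy recorded by $a'$ and $a^-$. First I would record that $a$, $a'$, and $a^-$ agree outside the region modified by Algorithm~\ref{alg:fold}, and that every cup or ray that does not cross the axis contributes, together with its mirror image, the same relation to all three diagrams by Lemma~\ref{lem:Dbase1}; thus $(M^a)^\Theta$, $(M^{a'})^\Theta$ and $(M^{a^-})^\Theta$ differ only through the relation carried by the innermost crossing cup $\{p,\,n+1-p\}$ (in Case~(2) the innermost pair of nested crossing cups). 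A direct index computation gives $m(p)=m$, so this cup peaks at the central space $V_m$, and its defining relation in \eqref{defi:Springer_component_in_quiver} reads $\ker B_{m\to p-1}=\ker A_{m\to n+1-p}$ in the notation of \eqref{eq:to}.

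The core step translates this relation into a statement about the witnessing $g=(g_i)_i$. Substituting $A_i=g_{i+1}B_{n-1-i}g_i^{-1}$ from \eqref{eq:fix} and telescoping the interior factors exactly as in the proof of Lemma~\ref{lem:Dbase1}, I obtain $A_{m\to n+1-p}=g_{n-p+1}\,B_{m\to p-1}\,g_m^{-1}$, whence $\ker A_{m\to n+1-p}=g_m\bigl(\ker B_{m\to p-1}\bigr)$. Therefore the crossing-cup relation is equivalent, on the fixed locus, to the $g_m$-invariance $g_m\bigl(\ker B_{m\to p-1}\bigr)=\ker B_{m\to p-1}$. To extract the dichotomy I would pin down the action of $g_m$ near the center: the framing relations for $\Gamma_k$ (and, when $n>2k$, for $\Gamma_{n-k}$) in \eqref{eq:fix}, combined with the surjectivity of the relevant compositions $\Gamma_{j\to i}$ supplied by the stability condition \eqref{eq:L2}, force the boundary components of $g$ to be mutually inverse, and propagating this inward through the $A$/$B$ fixity relations shows that $g_m$ restricts to an involution on the one- or two-dimensional direction created by the folding. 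Splitting according to the two eigenvalues of this involution then separates $(M^a)^\Theta$ into two pieces.

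I would then match these two pieces to $a'$ and $a^-$: the $+1$ branch yields precisely the unmarked relation defining $\Lambda^{a'}$ (the side-cup kernel equality in Case~(2), or the unmarked-ray relation in Case~(1)), while the $-1$ branch yields the marked relation defining $\Lambda^{a^-}$, exactly as in the small-rank computations of Examples~\ref{ex:Ka22} and~\ref{ex:31} (and as encoded by $\sigma_k$ in \eqref{def:LDkk} when $n=2k$, or by the sign in \eqref{def:LDn-kk} when $n>2k$). Running the same telescoping in reverse shows that any $\Theta$-fixed point satisfying the $a'$- (resp.\ $a^-$-) relations satisfies the crossing-cup relation of $a$, giving the inclusions $(M^{a'})^\Theta\subseteq(M^a)^\Theta$ and $(M^{a^-})^\Theta\subseteq(M^a)^\Theta$; together with the splitting of the previous paragraph this yields the set-theoretic equality. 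Disjointness is immediate once the split is phrased through the involution, since an eigenvalue cannot be both $+1$ and $-1$ (equivalently, the relevant kernel condition cannot hold simultaneously with equality and with strict inequality), so the union is in fact disjoint.

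The main obstacle is the central rigidity argument: showing that $g_m$ genuinely acts as an involution on the newly created direction and correctly identifying which eigenvalue carries the marker. This is where the index bookkeeping of the telescoping concentrates and where Case~(1) (a single crossing cup unfolding to a pair of rays, tied to the marked-ray relation) must be separated from Case~(2) (nested crossing cups unfolding to side-by-side cups, tied to the marked-cup relation); the examples cited above are the templates to be generalized, and the surjectivity inputs from stability---already exploited in Proposition~\ref{prop:a2}---are precisely what rule out the degenerate configurations that would otherwise spoil the clean two-way split. Finally, combining this lemma inductively with the base case Corollary~\ref{cor:Dbase} accounts for the full unfolding partial order of Definition~\ref{def:unfold}, which is how this step feeds into Theorem~\ref{thm:mainDQ}.
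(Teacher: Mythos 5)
Your treatment of the single-crossing-cup case tracks the paper's own proof closely: the paper also disposes of all non-crossing cups via Lemma~\ref{lem:Dbase1}/Remark~\ref{rmk:fix}, also rewrites the crossing-cup relation as $\ker A_{m\to n-i}=\ker A_{m\to n-i}\,g_m\inv$ (your $g_m$-invariance, obtained by the same telescoping through \eqref{eq:fix}), and then extracts the dichotomy from a symmetry argument: if $\lambda e+\mu f$ spans the relevant one-dimensional kernel, then admissibility, the cup relation, and the $\sigma_k$-twisted fixed-point equations force $\lambda e-\mu f$ into the same kernel, whence $\lambda=0$ or $\mu=0$. Your involution formulation is the same mechanism in different clothing (and is validated by Examples~\ref{ex:Ka22} and~\ref{ex:31}, where $g_2^2=1$ is derived exactly as you describe); the only caveat there is that your assignment of $+1$ to unmarked and $-1$ to marked is convention-dependent and in fact runs opposite to \eqref{def:LDkk}, where the \emph{marked} ray corresponds to $A\Gamma=+A\Gamma\sigma_k$, though it agrees with the sign in \eqref{def:LDn-kk}.

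The genuine gap is Case (2) of Algorithm~\ref{alg:fold}, i.e.\ when $a$ has at least two (nested) cups crossing the axis. Your core argument does not transfer there, for two reasons. First, the dichotomy is no longer about whether the crossing-cup relation at $V_m$ holds --- every point of $(M^a)^\Theta$ satisfies it by definition --- but about whether the side-by-side cup relations of $a'$ hold or fail (failure being exactly the marked-cup condition of $a^-$ in \eqref{def:LDkk}); those relations peak at the vertex $\lfloor\tfrac{i+j+2}{2}\rfloor\neq m$, and an eigenvalue split of $g_m$ at the central space neither produces nor obstructs kernel equalities at those off-center vertices. Second, the paper's split in this case is not an eigenvalue dichotomy at all but a dimension bound: one shows $\dim\ker A_{p+q\to j+1}\Gamma_{\to p+q}\in\{0,1\}$ (the value $2$ is excluded by comparing $2(p+q)$ against $\dim F_{j+1}=j+1\in\{2p-1,2p-2\}$), and then matches dimension $0$ to the unmarked and dimension $1$ to the marked side-by-side relations via admissibility and the cup relations of $a$. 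Nothing in your proposal supplies this step, and ``running the telescoping in reverse'' does not establish the substantive inclusion here, namely that the side-by-side relations (or their negations) together with $\Theta$-fixedness imply the nested crossing-cup relations. Until this case is argued, the claimed equality $(M^a)^\Theta=(M^{a'})^\Theta\sqcup(M^{a^-})^\Theta$ is only proved when $a$ has a unique crossing cup.
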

\proof
It suffices to show that  $(M^{a})^\Theta \setminus (M^{a'})^\Theta = (M^{a^-})^\Theta$.
There are two cases to be done -- if $a$ has at least two cups that cross the axis, or exactly one such cup. 

Assume that $a$ has exactly one cup crossing the axis, say the cup represented by $\{i+1, n-i\}$ for some $i<m$.
Note that the other cups lying entirely on one side of the diagram show up in pairs:
if $\{j, \sigma(j)\}$ represents a cup in $a$ that lies entirely on one side, 
then $\{n+1-\sigma(j), n+1-j\}$ represents its counterpart in $a$, and the relations imposed by the two cups, respectively, are equivalent due to Remark~\ref{rmk:fix}.

Let $(x,F_\bullet) = \tphi(A,B,\Gamma,0)$. 
By Proposition~\ref{prop:HL}, the flag $F_\bullet$ is isotropic, 
and hence is determined by the above cup relations modulo $F_i$ (and its counterpart $F_{n-i}$), which is, by Corollary~\ref{cor:Fi},
$
F_i = \ker ( A_{1\to i}\Gamma_{\to 1}| \ldots | A_{i-1}\Gamma_{\to i-1}|\Gamma_{\to i}).
$ 
Note that cup relation for $\{i+1, n-i\}$, $\ker A_{m\to n-i} = \ker B_{m\to i}$, is equivalent to
\eq
\ker A_{m\to n-i} = \ker A_{m\to n-i} g_m\inv.
\endeq
On the other hand, 
by Corollary~\ref{cor:Fi} we have
\eq
F_i = \ker ( A_{1\to i}\Gamma_{\to 1}| \ldots | A_{i-1}\Gamma_{\to i-1}|\Gamma_{\to i}).
\endeq
By Algorithm~\ref{alg:fold}, it remains to prove that $(M^a)^\Theta$ splits into two exclusive cases described by
\eq
\begin{cases}
\qquad  
\Gamma_{k \to \frac{i+1}{2}} = \pm \Gamma_{k \to \frac{i+1}{2}}\sigma_k
&\tif n-k = k, 
\\
A_{k \to k+\rho-1}\Gamma_{k}
=
\pm B_{n-k \to k+\rho-1}\Gamma_{n-k}
&\tif n-k > k.
\end{cases}
\endeq
For the case $n-k = k = m$, note that in $a$ there is no cups and hence $F_i$ contains $\<e_t, f_t \mid 1\leq t \leq \frac{i-1}{2}\>$.
It remains to investigate the possible vectors that span the 1-dimensional kernel of $A_{\frac{i+1}{2}\to i}\Gamma_{k\to \frac{i+1}{2}}$.
If $\ld e + \mu f \in \ker A_{\frac{i+1}{2}\to i}\Gamma_{k\to \frac{i+1}{2}}$, then by Remark~\ref{rmk:fix} we have
\eq
\begin{split}
\ld e + \mu f &\in \ker A_{i-1} \cdots A_{\frac{i+1}{2}} B_{\frac{i+1}{2}}\Gamma_{k}
= \ker g_i B_{n-i} \cdots B_{n-1-\frac{i+1}{2}}A_{n-1-\frac{i+1}{2}} \cdots \Gamma_k \sigma_k
\\
&
=\ker B_{n-\frac{i+1}{2}\to n-i}A_{k\to n-\frac{i+1}{2}}\Gamma_{k}\sigma_k,  
\end{split}
\endeq
and thus, by combining the admissibility conditions and the cup relation, we have
\eq
\begin{split}
\ld e - \mu f &\in \ker B_{n-\frac{i+1}{2}\to n-i}A_{k\to n-\frac{i+1}{2}} \cdots \Gamma_{k}
=
\ker A_{m\to n-i}A_{* \to m}\Gamma_{k\to *} 
\\
&
=\ker B_{m\to i}A_{* \to m}\Gamma_{k\to *} 
= \ker B_{n-* \to i} A_{k\to n-*}\Gamma_k
\\
&=\ker A_{\frac{i+1}{2}\to i}\Gamma_{k\to \frac{i+1}{2}}.
\end{split}
\endeq
Therefore, it splits into two cases $\ld = 0$ or $\mu = 0$, which correspond to whether $A_{\frac{i+1}{2}\to i} \Gamma_{k \to \frac{i+1}{2}}$ is equal to $\pm A_{\frac{i+1}{2}\to i} \Gamma_{k \to \frac{i+1}{2}}$.

For the case $n-k > k$, a similar analysis reduces the discussion to investigate the 1-dimensional kernel of the map 
\eq
(A_{k \to c+1}\Gamma_{k}| B_{n-k \to i-c}\Gamma_{n-k}) :
\<f_{c+1}, e_{i-c}\> \to V_i = \CC^k.
\endeq
A simpler deduction (due to the lack of the non-trivial involution $\sigma_k$) leads to that 
$A_{k \to c+1}\Gamma_{k} = \pm B_{n-k \to i-c}$ (cf. Example~\ref{ex:31}).

Assume now $a$ has more than one symmetric cup. We pick the two innermost nested cups connecting $j+1, n-j$, and $i+1, n-i$, respectively, such that $i<j < m$.
Write for short $p = \lfloor\frac{j+2}{2}\rfloor > q = \lfloor\frac{i+1}{2}\rfloor$.
Note that $F_{j+1} = \ker (A_{1\to {j+1}}\Gamma_{\to 1}| \ldots | \Gamma_{\to j+1})$. 
We have $\dim\ker A_{p+q\to j+1}\Gamma_{\to p+q} \neq 2$, otherwise
$2(p+q) \leq \dim F_{j+1} = j+1 \in \{2p-1, 2p-2\}$, which is absurd.

Hence, there are two exclusive cases: whether $\dim\ker A_{p+q\to j+1}\Gamma_{\to p+q} = 0$ or 1.
By a tedious analysis using admissibility conditions and the cups relations in $a$, one recover the second condition in \eqref{def:LDkk} assuming $\dim\ker A_{p+q\to j+1}\Gamma_{\to p+q} = 1$, and vice versa.
\endproof

\subsection{Two Jordan blocks of equal sizes}

We have the following result which generalizes~\cite{Fun03} and~\cite{SW12} to type D (see also Proposition~\ref{prop:known_results_about_irred_comp}).
\begin{theorem}\label{thm:main2a}
Let $x \in \cND$ be a nilpotent element of Jordan type $(k,k)$ as in \eqref{eq:partitionD} and $\da\in \BD_{k,k}$.
Then $K^{\da}\subseteq \cBD_{k,k}$ consists of the pairs $(x, F_\bullet)$ which satisfy the following conditions imposed by the diagram $\da$:
\begin{enumerate}[(i)]
\item If vertices $i < j \leq m$ are connected by a cup without a marker, then 
\[
F_{j}=x^{-\frac{1}{2}(j-i+1)}F_{i-1}.
\]
\item If vertices $i < j \leq m$ are connected by a marked cup, then 
\[
x^{\frac{1}{2}(j-i+1)}F_j+F_{i-1} = F_i \hspace{1em} \text{and} \hspace{1em} 
x^{\frac{1}{2}(n-2j)}F_j^\perp = F_j .
\]
\item If vertex $i$ is connected to a marked ray, then 
\[
F_i=\langle e_1,\ldots,e_{\frac{1}{2}(i-1)},f_1,\ldots,f_{\frac{1}{2}(i+1)}\rangle.
\]
\item If vertex $i$ is connected to a ray without a marker, then 
\[
F_i=\langle e_1,\ldots,e_{\frac{1}{2}(i+1)},f_1,\ldots,f_{\frac{1}{2}(i-1)}\rangle.
\]
\end{enumerate}
\end{theorem}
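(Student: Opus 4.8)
\proof
The plan is to translate, through Maffei's isomorphism, each of the four families of quiver relations defining $\Lambda^{\da}$ in \eqref{def:LDkk} into the asserted relation on the corresponding flag. Recall from \eqref{def:MD} that $K^{\da} = \tphi(M^{\da})$ with $M^{\da} = p_{k,k}(\Lambda^{\da})$, so that every flag in $K^{\da}$ is the image under $\tphi$ of some $(A,B,\Gamma,0) \in \Lambda^{\da}$, and by Proposition~\ref{prop:HL} such a flag is automatically isotropic. The two main tools are the explicit kernel description $F_i = \ker(A_{1\to i}\Gamma_{\to1}\mid\cdots\mid\Gamma_{\to i})$ from Corollary~\ref{cor:Fi} and the fixed-point identities \eqref{eq:fix} relating the left-half maps $A_\bullet,B_\bullet,\Gamma_\bullet$ to their right-half counterparts.

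Case (i) is immediate: the defining relation $\ker B_{m(i)\to i-1} = \ker A_{m(i)\to\sigma(i)}$ for an unmarked cup is exactly the type A cup relation, so by the equivalence established in the proof of Proposition~\ref{prop:a1} (see also Theorem~\ref{thm:main1}) it transports under $\tphi$ to $F_j = x^{-\frac12(j-i+1)}F_{i-1}$, with the isotropy imposing no extra condition.

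For the ray relations (iii) and (iv) I would apply Corollary~\ref{cor:Fi} to write $F_i$ as the kernel of a block matrix whose columns are indexed by the spaces $D''_{t-1} = \langle e_t, f_t\rangle$, so that the surviving coordinate directions are controlled by the composite $M := A_{\frac{i+1}{2}\to i}\,\Gamma_{k\to\frac{i+1}{2}} \colon D_k = \langle e,f\rangle \to V_i$. Since $\sigma_k$ fixes $e$ and scales $f$ by $(-1)^k$, the sign relation $M = \pm M\sigma_k$ of \eqref{def:LDkk}, read together with the fixed-point constraint $\Gamma_k = g_k\Gamma_k\sigma_k$, forces exactly one of $e,f$ into the relevant kernel; propagating this through the remaining stability and admissibility conditions as in Example~\ref{ex:31} then identifies $F_i$ with the coordinate subspace of (iii) or (iv). The care here is in the dependence of $\sigma_k$ on the parity of $k$, so I expect to split the argument according to whether $k$ is even or odd.

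The marked cup relation (ii) will be the main obstacle. Its defining condition is the \emph{negated} kernel relation $\ker B_{m(i)\to i-1} \neq \ker A_{m(i)\to\sigma(i)}$, where a dimension count inside $V_{m(i)}$ should show that the inequality can hold only by the two kernels differing in a single direction. First I would show, by the same admissibility manipulations that prove the cup equivalence in Proposition~\ref{prop:a1}, that this one-dimensional defect transports to $x^{\frac12(j-i+1)}F_j + F_{i-1} = F_i$, a rank-one perturbation of the unmarked relation (i). The harder half is the orthogonality relation $x^{\frac12(n-2j)}F_j^\perp = F_j$: here I would use $F_j^\perp = F_{n-j}$ for the isotropic flag together with the identities \eqref{eq:fix}, which turn the mirror cup produced by the folding in Algorithm~\ref{alg:fold} into a relation between $F_{n-j}$ and $F_j$; applying the appropriate power of $x = \tDe_1\tGa_1$ then yields the stated exponent $\frac12(n-2j)$. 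Pinning down this exponent, and verifying that it is the presence of the marker rather than its absence that produces it, is where the bulk of the case analysis will concentrate, with the balanced computation of $K_1,K_2$ in Example~\ref{ex:Ka22} serving as the prototype I would generalize.
\endproof
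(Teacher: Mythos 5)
Your proposal is correct and follows essentially the same route as the paper's own proof: part (i) is the type A result; part (ii) converts the negated kernel condition into $x^{\frac{1}{2}(j-i+1)}F_j+F_{i-1}=F_i$ by an elementary linear-algebra/dimension argument and derives $x^{\frac{1}{2}(n-2j)}F_j^\perp=F_j$ from the unmarked cup relations of the unfolded diagram (equivalently, the mirror relations coming from \eqref{eq:fix}) combined with isotropy of the flag; and parts (iii)--(iv) reduce to the fact that $\Gamma_{k\to\frac{i+1}{2}}=\pm\Gamma_{k\to\frac{i+1}{2}}\sigma_k$ forces $\ker\Gamma_{k\to\frac{i+1}{2}}$ to be $\<e\>$ or $\<f\>$. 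The only inessential difference is your proposed split on the parity of $k$ in (iii)--(iv), which is not needed: diagrams in $\BD_{k,k}$ contain a ray only when $k$ is odd, so $\sigma_k(f)=-f$ holds in every relevant case.
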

\begin{proof}
Part (i) is exactly the type A result. For part (ii), 
combining Algorithm~\ref{alg:fold}, \eqref{def:LDkk} and part (i), we know that 
\eq\label{eq:mcup-unfold}
F_{j}\neq x^{-\frac{1}{2}(j-i+1)}F_{i-1},
\quad
F_{n+1-j}= x^{-(m+1-j)}F_{j-1},
\quad
F_{n+1-i}= x^{-(m+1-i)}F_{i-1}.
\endeq
By elementary linear algebra, $F_{j}\neq x^{-\frac{1}{2}(j-i+1)}F_{i-1}$ is equivalent to $x^{\frac{1}{2}(j-i+1)}F_j+F_{i-1} = F_i$; while the latter two relations in \eqref{eq:mcup-unfold} are equivalent to 
$x^{\frac{1}{2}(n-2j)}F_j^\perp = F_j$ thanks to the fact that the subdiagram of the unfolded diagram $a$ from vertex $i$ to $n+1-i$ only contains unmarked cups.
Parts (iii) and (iv) follow from that
$\Gamma_{k \to \frac{i+1}{2}} = \pm \Gamma_{k \to \frac{i+1}{2}} \sigma_k$ is equivalent to $\ker \Gamma_{k \to \frac{i+1}{2}} = \<e\>$ or $\<f\>$ due to our choice of $\sigma_k$.
\end{proof}
\rmk
The readers  may notice that the marked cup relation in Theorem~\ref{thm:main2a}(ii) is a pair of equations instead of a single equation. 
In an earlier stage of this work, we thought it can be simplified using one single equation, but it turns out that there is no obvious way to do it to our knowledge.

\endrmk
\begin{ex}\label{ex:UImU}
Let $(x, F_\bullet) \in K^{\da})$, where $\da \in \BD_{5,5}$ is the cup diagram below:
\[ 
\begin{tikzpicture}[baseline={(0,-.3)}, scale = 0.8]
\draw (3.75,0) -- (1.25,0)  -- (1.25,-1.3) --  (3.75,-1.3);
\draw[dotted] (3.75,-1.3) -- (3.75,0);
\begin{footnotesize}
\node at (1.5,.2) {$1$};
\node at (2.5, -.65) {$\blacksquare$};
\node at (2,.2) {$2$};
\node at (3.25, -.35) {$\blacksquare$};
\node at (2.5,.2) {$3$};
\node at (3,.2) {$4$};
\node at (3.5,.2) {$5$};
\end{footnotesize}
\draw[thick] (2.5,0) -- (2.5, -1.3);
\draw[thick] (1.5,0) .. controls +(0,-.5) and +(0,-.5) .. +(.5,0);
\draw[thick] (3,0) .. controls +(0,-.5) and +(0,-.5) .. +(.5,0);
\end{tikzpicture}
\]
Since vertices $1, 2$ are connected by a cup without a marker, we have 
\[
F_{2}=x^{-1}F_{0} = \<e_1, f_1\>.
\]
Next, vertex $3$ is connected to a marked ray, so 
\eq
F_3=\langle e_1, e_2, f_1\rangle.
\endeq
Finally, vertices $4, 5$ are connected by a marked cup, and thus
\eq\label{eq:M45}
xF^5+F_3 = F_4, \quad F_5^\perp = F_5.
\endeq
The most general form for $F_4$ is $F_4 = \<e_1,f_1,f_2, \lambda e_2+\mu f_3\>$ for some $(\lambda, \mu) \in \CC^2\setminus (0,0)$. By \eqref{eq:M45}, $F_5$ must contains $\lambda e_3 + \mu f_4$ and hence
\[
F_5 =\<e_1,f_1,f_2, \lambda e_2+\mu f_3, \lambda e_3+\mu f_4\>.
\]
\end{ex}
\subsection{Two Jordan blocks of unequal sizes}
For this section we assume $n-k > k$.
Recall that $\rho(i) \in \ZZ_{>0}$ counts the number of rays (including itself) to the left of $i$ , and $c(i) = \frac{i-\rho(i)}{2}$ is the total number of cups to the left of $i$.
\begin{theorem}\label{thm:main2b}
Let $x \in \cND$ be a nilpotent element of Jordan type $(n-k,k)$ as in \eqref{eq:partitionD} and $\da\in \BD_{n-k,k}$.
Then $K^{\da}\subseteq \cBD_{n-k,k}$ consists of the pairs $(x, F_\bullet)$ which satisfy (i)--(ii) of Theorem~\ref{thm:main2a} and the following conditions imposed by the diagram $\da$:
\begin{enumerate}[(i)]
\setcounter{enumi}{3}
\item If vertex $i$ is connected to a marked ray, then 
\[
F_i=\langle e_1,\ldots,e_{i-c(i)-1},f_1,\ldots,f_{c(i)}, {f_{c(i)+1}+e_{i-c(i)}}\rangle.
\]
\item If vertex $i$ is connected to the rightmost ray without a marker, then 
\[
F_i=\langle e_1,\ldots,e_{i-c(i)-1},f_1,\ldots,f_{c(i)}, {f_{c(i)+1}-e_{i-c(i)}}\rangle.
\]
\item If vertex $i$ is connected to an unmarked ray that is not the rightmost, then 
\[
F_i=\langle e_1,\ldots,e_{i-c(i)},f_1,\ldots,f_{c(i)}\rangle.
\]
\end{enumerate}
\end{theorem}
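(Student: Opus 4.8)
The plan is to run, in the unequal-block case $n-k>k$, the same translation scheme already used for Theorem~\ref{thm:main2a}: starting from $(A,B,\Gamma,0)\in\Lambda^{\da}$, I read off the isotropic flag $F_\bullet$ attached to $\tphi(p_{n-k,k}(A,B,\Gamma,0))$ through the kernel formula of Corollary~\ref{cor:Fi}, and I convert each defining relation of $\Lambda^{\da}$ in \eqref{def:LDkk}--\eqref{def:LDn-kk} into one of the asserted flag relations. Isotropy is supplied by Proposition~\ref{prop:HL}, and I may freely use the admissibility identity $B_iA_i=A_{i-1}B_{i-1}$, which is the $\Delta=0$ specialization of \eqref{eq:L1} valid by Corollary~\ref{cor:Spr}.

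First I would dispose of parts (i) and (ii), which need no fresh computation. The cup and marked-cup conditions defining $\Lambda^{\da}$---the first two lines of \eqref{def:LDkk}---are literally the same for $n-k=k$ and for $n-k>k$, and the proof of Theorem~\ref{thm:main2a}(i)--(ii) used only those conditions together with Theorem~\ref{thm:main1}, the unfolding identities \eqref{eq:mcup-unfold} produced by Algorithm~\ref{alg:fold}, isotropy, and elementary linear algebra. Since none of these ingredients distinguishes the two Jordan-block sizes, parts (i) and (ii) carry over verbatim.

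Part (vi), an unmarked ray that is not the rightmost, carries no relation of its own in $\Lambda^{\da}$; it is determined by the cups lying to its left, exactly as a ray is determined by the cups of a type A diagram. Writing $F_i=\ker(A_{1\to i}\Gamma_{\to1}\mid\cdots\mid\Gamma_{\to i})$ by Corollary~\ref{cor:Fi} and running the same kernel computation as for the type A ray relation (Proposition~\ref{prop:ray} and Proposition~\ref{prop:known_results_about_irred_comp}(b)), the cups to the left contribute the directions $f_1,\ldots,f_{c(i)}$ while stability forces the remaining slots to be $e_1,\ldots,e_{i-c(i)}$; here $i-c(i)=\half(i+\rho(i))$ recovers the type A index, so $F_i=\langle e_1,\ldots,e_{i-c(i)},f_1,\ldots,f_{c(i)}\rangle$, of the correct dimension $i$. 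The point is that such a ray is far from the axis of reflection, so the $e$-chain generated by $\Gamma_{n-k}$ and the $f$-chain generated by $\Gamma_{k}$ do not yet interact at vertex $i$.

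The core of the theorem, and the step I expect to be the main obstacle, is parts (iv) and (v), the rightmost ray. By rule $(M2)$ a marker can only sit on the rightmost ray, so both cases are controlled by the single relation \eqref{def:LDn-kk}, $A_{k\to k+\rho(i)-1}\Gamma_{k}=\mp\,B_{n-k\to k+\rho(i)-1}\Gamma_{n-k}$ (upper sign marked, lower sign unmarked). Computing $F_i$ through Corollary~\ref{cor:Fi} and invoking stability and admissibility exactly as in the small-rank prototype Example~\ref{ex:31}, I would show that $F_i$ is the sum of a rigid part $\langle e_1,\ldots,e_{i-c(i)-1},f_1,\ldots,f_{c(i)}\rangle$ and a single one-dimensional ambiguous slot spanned by a vector $f_{c(i)+1}+\lambda\,e_{i-c(i)}$. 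The relation \eqref{def:LDn-kk} then forces $\lambda=\pm1$, and matching the marked/unmarked alternative against the resulting vector yields $f_{c(i)+1}+e_{i-c(i)}$ for a marked ray (part (iv)) and $f_{c(i)+1}-e_{i-c(i)}$ for the unmarked rightmost ray (part (v)). The delicate point is exactly the determination of this sign: it originates in the involution $\sigma_k$ and the fixed-point equations \eqref{eq:fix}, and must be transported through the admissibility chain $B_iA_i=A_{i-1}B_{i-1}$ that links the two composites $A_{k\to\bullet}\Gamma_{k}$ and $B_{n-k\to\bullet}\Gamma_{n-k}$, which traverse the quiver in opposite directions. Getting this sign right, rather than the index bookkeeping, is the genuinely error-prone part. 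Once the forward translation is complete, the reverse inclusion---that every isotropic flag satisfying (i), (ii), (iv)--(vi) actually lies in $K^{\da}$---follows because these relations cut out an irreducible variety of the expected dimension, which is precisely what is proved in Section~\ref{sec:irred}.
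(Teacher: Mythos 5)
Your proposal matches the paper's proof in all essentials: the paper likewise reduces at once to the rightmost-ray cases, writes $F_i$ via the kernel formula of Corollary~\ref{cor:Fi}, shows that the two blocks attached to $f_{c(i)+1}$ and $e_{i-c(i)}$ are individually injective using admissibility together with the fixed-point equations \eqref{eq:fix}, and then extracts the one-dimensional kernel spanned by $f_{c(i)+1}\pm e_{i-c(i)}$ from the sign in \eqref{def:LDn-kk} --- exactly the computation and the sign analysis you outline, with parts (i)--(ii) and (vi) dismissed just as you dismiss them. The only point of difference is your closing appeal to Section~\ref{sec:irred} for the reverse inclusion, which the paper does not make (and which would be delicate there, since the irreducibility arguments of that section use the flag descriptions established by this very theorem); the paper simply leaves that direction implicit, so this does not affect the core argument.
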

\begin{proof}
It suffices to prove part (iii) -- (iv) regarding the new relations for the rightmost ray.
Write $\rho = \rho(i), c = c(i) = \frac{i-\rho}{2}$ for short.
By Corollary~\ref{cor:Fi}, we have $F_i = \ker(A_{1\to i}\Gamma_{\to 1}| \ldots | \Gamma_{\to i})$, and we are to prove that
\eq
\begin{split}
&\ker(A_{c+1\to i}\Gamma_{k\to c+1}) = 0 = \ker(A_{i-c\to i}\Gamma_{n-k\to i-c}),
\\
&\ker(A_{c+1\to i}\Gamma_{k\to c+1}| A_{i-c\to i}\Gamma_{n-k\to i-c}) = \CC.
\end{split}
\endeq
The former line follows from the admissibility conditions and \eqref{eq:fix}, while the latter line follows from 
$A_{c+1\to i}\Gamma_{k\to c+1} = \pm A_{i-c\to i}\Gamma_{n-k\to i-c}$,
which is a consequence of \eqref{def:LDn-kk}.
Therefore, the additional basis vector is $f_{c+1} \pm e_{i-c}$.
\end{proof}
\begin{ex}\label{ex:||U}
Let $F_\bullet \in K^{\da}$, where $\da \in \BD_{5,3}$ is the cup diagram below:
\[
\begin{tikzpicture}[baseline={(0,-.3)}, scale = 0.8]
\draw  (1.75,0) -- (-.25,0)  -- (-.25,-.9)--(1.75,-.9);
\draw[dotted] (1.75,0) -- (1.75,-.9);
\begin{footnotesize}
\node at (0,.2) {$1$};
\node at (.5,.2) {$2$};
\node at (1,.2) {$3$};
\node at (1.5,.2) {$4$};
\end{footnotesize}
\draw[thick] (1,0) .. controls +(0,-.5) and +(0,-.5) .. +(.5,0);
\draw (0,0) -- (0,-.9);
\draw (0.5,0) -- (0.5,-.9);
\end{tikzpicture}
\]
Firstly, vertex $1$ is connected to a unmarked ray that is not the rightmost, so 
\[
F_1=\langle e_1 \rangle.
\]
Secondly, although vertex $2$ is connected to a ray without a marker, we use a different rule since it is the rightmost ray. Hence, 
\[
F_2=\langle e_1, f_1 - e_2\rangle.
\]
Finally, vertices $2, 3$ are connected by a cup without a marker, we obtain that 
\[
F_{3}=x\inv F_{1} = \<e_1, e_2, f_1\>.
\]
\end{ex}

\section{Irreducibility}\label{sec:irred}
In this section we prove the irreducibility of $K_{\da}$ for $\da \in B^\textup{D}_{n-k,k}$ via an induction on $n$. 
\subsection{The base case}
Firstly we prove the case in which $n=2m \leq 4$.
\lemma\label{lem:n<=4}
Let $\da\in \BD_{\ld}$ for some $\ld = (n-k,k)$, $n\leq 4$. Then $K_{\da}$ is irreducible.
\endlemma
\proof
The four irreducible components are described explicitly in Examples~\ref{ex:Ka11}--\ref{ex:Ka22}. Following the notation therein, both $K^{\da}, K^{\dot{b}}$ are (geometric) points, and hence it suffices to show that $K^{\da_{12}}, K^{\da_{13}}$ are irreducible, where
\[
\da_{13} = \mcupab~,~\da_{12} = \mcupaa.
\]
We prove this by showing that they are $\CP^1$-bundles, i.e., for $i=2,3$, we are to construct $(K^{\da_{1i}}, \CP^1, \pi_i, \{\textrm{pt}\})$ such that the diagram below commutes:
\eq\label{eq:n<=4comm}
\begin{tikzcd}
\pi_i\inv(\CP^1) \ar[r,"\phi_i", "\simeq"'] \ar[d,"\pi_i"]
& \CP^1\times \{\textup{pt}\} \ar[ld, "\textup{proj}_1"]
\\
\CP^1
&
\end{tikzcd}
\endeq
Here $\phi_i$ is a homeomorphism, proj$_1$ is the projection onto the first factor.
We define the projection map $\pi_i$ by
\eq
\pi_i:K^{\da_{1i}}\to \CP^1,
\quad
( 0 \subset F_1=\<\ld e_1+\mu f_1\> \subset \ldots \subset\CC^n) \mapsto [\ld:\mu].
\endeq
We see from Example~\ref{ex:Ka22} that $\pi_i\inv(\CP^1) = K^{\da_{1i}}$.
Now we define $\phi_i$ by 
\eq
\phi_i:K^{\da_{1i}}\to \CP^1 \times \{\textup{pt}\},
\quad
( 0 \subset F_1=\<\ld e_1+\mu f_1\> \subset \ldots \subset\CC^n) \mapsto ([\ld:\mu], \textup{pt}).
\endeq
It is easy to check that its inverse is given by
$\phi_i\inv([\ld:\mu], \textup{pt}) = F_\bullet$, where
\eq
F_1 = \<\ld e_1 + \mu f_1\>,
\quad
F_3 = \<e_1, f_1, \ld e_2 + \mu f_2\>,
\quad 
F_2 = \begin{cases}
\<e_1, f_1\> &\tif i=3;
\\
\<\ld e_1 + \mu f_1,\ld e_2 + \mu f_2\> &\tif i=2.
\end{cases}
\endeq
It is routine to check that $\phi_i$ is {bicontinuous} as well as that \eqref{eq:n<=4comm} commutes. We are done.
\endproof
\subsection{Iterated $\CP^1$-bundles}
\begin{definition}
A space $X$ is called an {\em iterated $\CP^1$-bundle of length $\ell$} if there exists spaces
$X=X_1, X_2, \ldots, X_\ell, X_{\ell+1} = \textup{pt}$ and maps $\pi_i:X_i \to \CP^1$ such that $(X_i, \CP^1, \pi_i, X_{i+1})$ is a fiber bundle for $1\leq i \leq \ell$.
A point is considered an iterated $\CP^1$-bundle of length 0.
\end{definition}
The goal of this section is to prove the following theorem.
\begin{thm}\label{thm:main3}
Let $\da\in \BD_{n-k,k}$ be a marked cup diagram with $\ell$ cups. 
Then
$K^{\da}$ is an iterated $\CP^1$-bundle of length $\ell$. 

As a corollary, $\{K^{\da}~|~\da\in \BD_{n-k,k}\}$ forms the complete list of irreducible components of the two-row Springer fiber $\cBD_{n-k,k}$ of type D. 
\end{thm}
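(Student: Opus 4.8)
The plan is to establish the iterated $\CP^1$-bundle structure by induction on $n=2m$, the base case $n\le 4$ being exactly Lemma~\ref{lem:n<=4}, and then to derive the classification of irreducible components from it. The inductive mechanism mirrors the base case: I would project a flag $(x,F_\bullet)\in K^{\da}$ onto its bottom piece $F_1$, a line in $\ker x$, and analyze the fibers. All computations can be carried out on the half-flag $F_1\subseteq\cdots\subseteq F_m$ using the explicit descriptions of $K^{\da}$ provided by Theorems~\ref{thm:main2a} and~\ref{thm:main2b}, the remaining steps $F_{m+1},\dots,F_{n-1}$ being forced by the isotropy condition $F_i=F_{n-i}^{\perp}$.

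For the inductive step, fix $\da\in\BD_{n-k,k}$ with $\ell\ge1$ cups and look at the leftmost vertex. If vertex $1$ lies on a ray, then Theorem~\ref{thm:main2a}(iii)--(iv) (respectively Theorem~\ref{thm:main2b}) pins $F_1$ down to a fixed isotropic line, so the projection to $\CP^1$ is constant; descending to the induced complete isotropic flag on $F_1^{\perp}/F_1$ exhibits $K^{\da}$ as isomorphic to a component $K^{\db}$ for the two-row type D Springer fiber of the induced nilpotent $\bar x$ on $F_1^{\perp}/F_1$, where $\db$ is obtained from $\da$ by deleting the leftmost ray and still has $\ell$ cups. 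Since $\bar x$ has strictly smaller rank, the inductive hypothesis makes $K^{\da}\cong K^{\db}$ an iterated $\CP^1$-bundle of length $\ell$. If instead vertex $1$ is the left endpoint of a cup, then the corresponding cup relation leaves $F_1$ free to sweep out all of $\CP(\ker x)\cong\CP^1$, so $\pi\colon K^{\da}\to\CP^1$ is surjective; fixing $F_1$ and passing to an appropriate isotropic subquotient identifies the fiber with a component $K^{\db}$ of a strictly smaller two-row type D Springer fiber carrying one fewer cup. By induction the fiber is an iterated $\CP^1$-bundle of length $\ell-1$, whence $K^{\da}$ is one of length $\ell$.

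Two points then require care. First, $\pi$ must be shown to be a genuine, locally trivial fiber bundle rather than a map with constant fiber type; I would produce explicit local trivializations over the two standard affine charts of $\CP^1$ by reading the remaining flag data directly off the formulas in Theorems~\ref{thm:main2a}--\ref{thm:main2b}, precisely as the explicit trivialization $\phi_i$ was written down in Lemma~\ref{lem:n<=4}, or alternatively invoke transitivity of the centralizer of $x$ in the relevant orthogonal group on $\CP(\ker x)$ to force homogeneity. Second, and this is the main obstacle, one must verify that the subquotient reduction really lands back inside the family $\{K^{\db}\}$: that the induced nilpotent $\bar x$ is again of two-row type, that the induced symmetric bilinear form is nondegenerate, and---most delicately---that the cup and ray relations of $\da$ transport under the quotient into exactly the relations of a bona fide marked cup diagram $\db\in\BD_{n',k'}$ with the predicted number of cups, with the markers and the special rightmost-ray relation of Theorem~\ref{thm:main2b} transferred correctly. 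Treating a marked cup or marked ray situated at vertex $1$ (which the accessibility rule permits) on the same footing as the unmarked case is where the bookkeeping is heaviest.

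Finally, the classification of components follows formally. An iterated $\CP^1$-bundle of length $\ell$ is irreducible of dimension $\ell$, so each $K^{\da}$ is an irreducible subvariety of $\cBD_{n-k,k}$ of dimension $\ell=\lfloor k/2\rfloor=\dim\cBD_{n-k,k}$. Because Springer fibers are equidimensional, an irreducible closed subvariety of this top dimension is automatically an irreducible component; hence every $K^{\da}$ is an irreducible component. By Theorem~\ref{thm:mainDQ} the varieties $\{K^{\da}\mid\da\in\BD_{n-k,k}\}$ cover $\cBD_{n-k,k}$, while the bijection recalled in Section~\ref{sec:mcup} says the total number of components is $\lvert\BD_{n-k,k}\rvert$. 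Since a component can never be contained in the union of the other components, a covering by components indexed by a set of exactly that cardinality must be injective and exhaust all of them; thus $\{K^{\da}\}$ is precisely the complete list of irreducible components of $\cBD_{n-k,k}$.
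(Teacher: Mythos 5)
Your overall skeleton---induction on $n$ with base case Lemma~\ref{lem:n<=4}, analysis of the leftmost vertex, and the dimension/counting argument for the corollary---matches the paper, and the two cases you do treat are essentially the paper's Case II (cup at vertex $1$, fibering over $\CP^1$ via $F_1$) and Case III (ray at vertex $1$, isomorphism onto a component for a smaller nilpotent, same number of cups). Your derivation of the corollary is sound and, if anything, more explicit than the paper's.

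There is, however, a genuine gap in the inductive step: your dichotomy misses the paper's Case I, namely an \emph{unmarked} cup joining vertex $1$ to a vertex $2t<m$ (so that rays may occur to the right of the cup), and in that case the mechanism you propose fails structurally, not just in bookkeeping. The point is that the subquotient $F_1^\perp/F_1$ does not behave uniformly as $F_1$ sweeps out $\CP(\ker x)$. Concretely, take $\ld=(3,3)$ and $\da$ the diagram with an unmarked cup $(1,2)$ and an unmarked ray at $3$ (the diagram $\da_1$ in the paper's $\BD_{3,3}$ example). For $F_1=\<e_1\>$ or $\<f_1\>$ the induced nilpotent on $F_1^\perp/F_1$ has Jordan type $(2,2)$, but for $F_1=\<ae_1+bf_1\>$ with $ab\neq 0$ one has $ae_3-bf_3\in F_1^\perp$ and the chain $\overline{ae_3-bf_3}\mapsto\overline{ae_2-bf_2}\mapsto 2a\bar e_1\mapsto 0$, so the type is $(3,1)$: the Jordan type jumps with the basepoint, and there is no single ``strictly smaller two-row type D Springer fiber'' whose components could serve as the typical fiber. (Uniformity does hold when $m$ is even, which is exactly why the paper's Case II---where the all-cups condition forces $m$ even---may quotient by the varying line $F_1$; see the hypothesis $m\in2\ZZ$ on the isomorphisms $Q^{\II}_j$ in Lemma~\ref{lem:Q}(b).) Relatedly, the diagram you would need for the fiber---delete the cup at vertex $1$ and insert a marked ray at $\sigma(1)-1$---violates the marker-accessibility rule whenever the subdiagram to the right of $\sigma(1)$ contains rays, so it is not an element of any $\BD_{n',k'}$; correspondingly the fiber is in general a \emph{product} of components coming from two different smaller Springer fibers, not a single component. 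The paper closes this case by a different decomposition: it splits the flag at the \emph{fixed} space $F_{2t}=\ker x^t$ rather than at the varying line $F_1$, proving $K^{\da}\cong K^{\db}\times K^{\dc}$ (Proposition~\ref{prop:caseI}, via the formed-space isomorphism $Q^{\textup{I}}$ of Lemma~\ref{lem:Q}(a)), and then invokes \cite[Lemma~8.11]{S12} to conclude that a product of iterated $\CP^1$-bundles is again an iterated $\CP^1$-bundle. Without this third case, or some substitute for it, your induction does not close.
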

We have proved the special case $n\leq 4$ of Theorem~\ref{thm:main3} in Lemma~\ref{lem:n<=4}. 
Next, we will use an induction on $n=2m$.
For $m \geq 3$, below is an exhaustive list of possible ray or cup that connects the first vertex in the marked cup diagram $\da \in \BD_{n-k,k}$:
\eq\label{eq:list}
\icupa~,
\quad
\icupb~,
\quad
\icupc~,
\quad
\icupd~
\quad
(1\leq t \leq \textstyle\lfloor\frac{m}{2}\rfloor)
\endeq
Here $a_1$ represents the subdiagram of $\dot{a}$ to the right of the ray or cup connected to vertex 1; while $a_2$ represents the subdiagram of $\dot{a}$ enclosed by the cup connected to vertex 1.

Our plan is to show that:
\begin{itemize}
\item Certain $K^{\da}$ is the trivial fiber bundle $K^{\dot{b}} \times K^{\dot{c}}$, where  $K^{\dot{b}}$ and $K^{\dot{c}}$ are both iterated $\CP^1$-bundles of shorter lengths.
\item Any remaining $K^{\da}$ is isomorphic to an irreducible component of a smaller $n$.
\end{itemize}
It then follows from \cite[Lemma~8.11]{S12} that $K^{\dot{b}} \times K^{\dot{c}}$ is also an iterated $\CP^1$-bundle.
For our proof, we have to rearrange \eqref{eq:list} and split them into three major cases (with possibly subcases) as below:
\begin{enumerate}[~]
\item Case I: Vertex 1 is connected to $2t$ via an unmarked cup, where $2t < m$.
\item Case II: Vertex 1 is connected to a cup, and is not case I.
Note that in case II, the subdiagram $a_2$ only consists of cups.
\item Case III: Vertex 1 is connected to a ray. We will see that it needs to have three subcases.
\end{enumerate}

\subsection{Formed spaces}
In each case we will discuss isomorphisms between {\em formed spaces}, i.e., vector spaces equipped with bilinear forms. 
For $\ld = (\ld_1, \ld_2) \vdash n$, we denote by $V_\ld$ the formed space $\CC^n = \<e_1^\ld, \ldots, e^\ld_{\ld_1}, f^\ld_1, \ldots, f^\ld_{\ld_2}\>$ equipped with the bilinear form $\beta_\ld$ (see \eqref{eq:basisVld}--\eqref{eq:beta}).

For $\ld = (\ld_1,\ld_2), \mu = (\mu_1,\mu_2)$ such that $\ld_1 \geq \mu_1, \ld_2 \geq \mu_2$, we define  a projection
\eq\label{def:P}
P^\ld_\mu: V_\ld \to V_\mu,
\quad
e^\ld_i \mapsto \begin{cases}
e^\mu_i &\tif 1\leq i \leq \mu_1;
\\
0 &\textup{otherwise},
\end{cases}
\quad
f^\ld_i \mapsto \begin{cases}
f^\mu_i &\tif 1\leq i \leq \mu_2;
\\
0 &\textup{otherwise}.
\end{cases}
\endeq
We also define the inverse map of its restriction on $\<e^\ld_i, f^\ld_j ~|~ 1\leq i \leq \mu_1, 1\leq j \leq \mu_2\>$ by
\eq
P^\mu_\ld: V_\mu \to V_\ld,
\quad
e^\mu_i \mapsto e^\ld_i,
\quad
f^\mu_j \mapsto f^\ld_j.
\endeq 
Now we consider a formed subspace $W \subseteq V_\ld$ that is isotropic (i.e., $W \subseteq W^\perp$).
Let  $\{x^\ld_i\}_{i\in I}$ and $\{x^\ld_j\}_{j\in J}$ be bases of $W$ and $W^\perp$, respectively, such that $I \subsetneq J$.
We write $\bar{x}^\ld_j = x^\ld_j + W$, and hence
$\{\bar{x}^\ld_j\}_{j \in J\setminus I}$ forms a basis of $W^\perp/W$. 
Denote the quotient map by $\psi = \psi(W)$ by
\eq\label{eq:psi}
\psi:W^\perp \to W^\perp/W.
\endeq
Moreover,  $W^\perp/W$ is a formed space equipped with the induced bilinear form $\bar{\beta}_\ld$ given by $\bar{\beta}_\ld(x+W,y+W) = \beta_\ld(x,y)$ for all $x,y \in W^\perp$.

In the following we give explicit isomorphisms between certain formed spaces that will be used throughout Section~\ref{sec:irred}.
\begin{lemma}\label{lem:Q}
Let $\ld = (n-k,k)$.
\begin{enumerate}[(a)]
\item
Let $W$ be the formed subspace of $V_\ld$ spanned by $e_i^\ld,  f_i^\ld (1\leq i \leq t)$ such that $2t<m$.
Then 
\[
W^\perp = \< e_i^\ld,  f_j^\ld ~|~ 1\leq i \leq n-k-t, 1\leq j \leq k-t \>.
\] 
Moreover, there is a formed space isomorphism $Q^{\textup{I}}:W^\perp/W \to V_\nu$, for $\nu = \ld - (2t,2t)$, given by
\eq\label{eq:Qa}
\bar{e}^\ld_{t+i} \mapsto \begin{cases}
\sqrt{-1} e^\nu_i &\tif t \in 2\ZZ+1;
\\
e^\nu_i &\textup{otherwise},
\end{cases}
\quad
\bar{f}^\ld_{t+i} \mapsto \begin{cases}
\sqrt{-1} f^\nu_i &\tif t \in 2\ZZ+1;
\\
f^\nu_i &\textup{otherwise},
\end{cases}
\endeq
\item Assume that $\ld = (m,m)$ and $W = \<ce_1^\ld+ d f_1^\ld\>$ for some $(c,d) \in \CC^2\setminus \{(0,0)\}$. Then 
\[
W^\perp = \<c e_m^\ld+ d f_m^\ld, f^\ld_i ~|~ 1\leq i \leq m-1\>.
\]
Moreover, the assignments below all define formed space isomorphisms between $W^\perp/W$ and $V_\nu$, for $\nu = (m-1,m-1)$:
\begin{subequations}
\begin{align}
&Q^{\III}_1:&
\sqrt{-1} \bar{e}^\ld_{i+1} &\mapsto e^\nu_i,
&
\sqrt{-1} \bar{f}^\ld_{i} &\mapsto  f^\nu_i,
&\tif d=0;\label{eq:III-1}
\\
&Q^{\III}_2:&
\bar{f}^\ld_{i+1} &\mapsto f^\nu_i,
&
 \bar{e}^\ld_{i} &\mapsto  e^\nu_i,
&\tif c=0;\label{eq:III-2}
\\
&Q^{\II}_1:&
\sqrt{-1} (\overline{e^\ld_{i+1}+\textstyle\frac{d}{c} f^\ld_{i+1}}) &\mapsto e^\nu_i,
&
\sqrt{-1} \bar{f}^\ld_{i} &\mapsto  f^\nu_i,
&\tif c\neq 0, m\in2\ZZ;\label{eq:II-1}
\\
&Q^{\II}_2:&
(\overline{\textstyle\frac{c}{d} e^\ld_{i+1}+ f^\ld_{i+1}}) &\mapsto f^\nu_i,
&
\bar{e}^\ld_{i} &\mapsto  e^\nu_i,
&\tif d\neq 0, m\in2\ZZ.\label{eq:II-2}
\end{align}
\end{subequations}
\item Assume that $n-k > k$ and $W = \<e_1^\ld\>$. Then
\[
W^\perp = \<e_i^\ld, f^\ld_j ~|~ 1\leq i \leq n-k-1, 1\leq j \leq k\>.
\]
Moreover,  the assignments below all define formed space isomorphisms between $W^\perp/W$ and $V_\nu$, for $\nu = (n-k-2,k)$:
\begin{subequations}
\begin{align}
&Q^{\III}_3:&
\sqrt{\textstyle\frac{-1}{2}}(\overline{e^\ld_{i+1}+ f^\ld_{i}}) &\mapsto e^\nu_i,
&
\sqrt{\textstyle\frac{-1}{2}}(\overline{e^\ld_{i+1}- f^\ld_{i}}) &\mapsto  f^\nu_i,
&\tif n-k-2=k; \label{eq:III-3}
\\
&Q^{\III}_4:&
\sqrt{-1}\bar{e}^\ld_{i+1} &\mapsto e^\nu_i,
&
\sqrt{-1}\bar{f}^\ld_{i} &\mapsto  f^\nu_i,
&\tif  n-k-2>k. \label{eq:III-4}
\end{align}
\end{subequations}
\end{enumerate}
\end{lemma}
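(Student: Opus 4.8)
The plan is to prove all three parts by direct computation with the Gram matrix $M^\ld$ of \eqref{eq:beta}, using that the antidiagonal blocks $J_i$ pair each basis vector of \eqref{eq:basisVld} with a single ``mirror'' partner. Explicitly, in the unequal case $n-k>k$ one has $\beta_\ld(e^\ld_a,e^\ld_b)=(-1)^{a-1}$ when $b=n-k+1-a$ (and $0$ otherwise), $\beta_\ld(f^\ld_a,f^\ld_b)=(-1)^{a-1}$ when $b=k+1-a$, and all mixed pairings vanish; in the equal case $n-k=k=m$ the only nonzero pairings are $\beta_\ld(e^\ld_a,f^\ld_b)=(-1)^{a-1}$ when $b=m+1-a$. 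For each of (a)--(c) I would run the same four steps: (i) compute $W^\perp$; (ii) check $W\subseteq W^\perp$, so that the induced form $\bar\beta_\ld$ on $W^\perp/W$ attached to $\psi$ in \eqref{eq:psi} is well defined; (iii) read off a basis of $W^\perp/W$ from the mirror-pairing description; and (iv) verify that the displayed map carries $\bar\beta_\ld$ to $\beta_\nu$ and is bijective.

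For step (i), each orthogonality condition $\beta_\ld(v,w)=0$ with $w$ a generator of $W$ forces exactly one coordinate of $v$ (the mirror partner of $w$) to vanish, so the conditions cut out precisely the claimed span, and the count $\dim W^\perp=n-\dim W$ shows nothing is missing. In part (a) the conditions $\beta_\ld(v,e^\ld_i)=\beta_\ld(v,f^\ld_i)=0$ for $i\le t$ kill the top $t$ of the $e$-coordinates and the top $t$ of the $f$-coordinates, giving $W^\perp=\langle e^\ld_i,f^\ld_j\mid i\le n-k-t,\ j\le k-t\rangle$; here $W$ is isotropic because $2t<m$ together with $2t\le k$ (forced by $\nu=\ld-(2t,2t)$ being a partition) places the indices $1,\dots,t$ strictly below their mirror partners. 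In parts (b) and (c), where $W$ is one-dimensional, $W^\perp$ is a single hyperplane and I would identify the one surviving combination in the top mirror pair --- the vector $ce^\ld_m+df^\ld_m$ up to the sign dictated by the parity of $m$ in (b), and the appropriate truncation in the $e$-block in (c) --- again using the sign $(-1)^{a-1}$ to pin down the correct linear combination.

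For step (iv), after passing to $W^\perp/W$ the surviving representatives $\bar e^\ld_{t+i},\bar f^\ld_{t+i}$ (or the shifted analogues $\bar e^\ld_{i+1},\bar f^\ld_i$ in (b), (c)) pair again through mirror partners, now over the shorter index ranges determined by $\nu$. The resulting Gram matrix matches that of $\beta_\nu$ up to two discrepancies: the reindexing shift $a\mapsto a-t$ (resp.\ $a\mapsto a-1$), which multiplies the antidiagonal sign by $(-1)^t$ (resp.\ $-1$), and an overall scalar normalization. The factors $\sqrt{-1}$ and $\sqrt{-1/2}$ in \eqref{eq:Qa} and in $Q^{\III}_j,Q^{\II}_j$ are chosen exactly to absorb these: squaring $\sqrt{-1}$ supplies the $-1$ that flips the shifted sign, while the $\tfrac12$ under the root in \eqref{eq:III-3} compensates for the factor $2$ produced when the folded pair $e^\ld_{i+1}\pm f^\ld_i$ is paired, converting the ``diagonal'' $\beta_\ld$ into the ``off-diagonal'' $\beta_\nu$ of the equal type $(k,k)$. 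The several maps \eqref{eq:III-1}--\eqref{eq:II-2} correspond to the disjoint subcases $c\neq 0$ versus $d\neq 0$ and the parity of $m$, and in each subcase the check reduces to verifying $\beta_\nu(Qx,Qy)=\bar\beta_\ld(x,y)$ on one nontrivial pair of basis classes; bijectivity is immediate since $Q$ sends a basis to a basis.

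The routine content is thus the case-by-case pairing computation, and the only genuine obstacle is the sign and parity bookkeeping. I expect the most delicate point to be part (b): one must confirm that the chosen top-pair representative $\overline{ce^\ld_m+df^\ld_m}$ really lies in $W^\perp$ and is nonzero modulo $W$, and that it pairs correctly with the classes $\bar f^\ld_i$ (resp.\ $\bar e^\ld_i$), since this is exactly where the parity of $m$ forces the split into $Q^{\II}_1$ and $Q^{\II}_2$. Once the mirror-pairing rule and the scalar normalizations are fixed, all the remaining identities follow by substitution.
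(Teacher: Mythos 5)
Your proposal is correct and follows essentially the same route as the paper's own proof: a direct computation with the Gram matrix $M^\ld$ of \eqref{eq:beta}, determining $W^\perp$ and the induced form $\bar\beta_\ld$ on $W^\perp/W$, and then checking that the displayed assignments are isometries, with the $\sqrt{-1}$ (resp.\ $\sqrt{-1/2}$) normalizations absorbing the sign flip caused by the index shift (resp.\ the factor $2$ from the folded basis $e^\ld_{i+1}\pm f^\ld_i$ in the case $n-k-2=k$). In particular, your identification of part (b) as the delicate case is exactly where the paper's argument also concentrates: there the parity of $m$ enters through the self-pairing $\beta_\ld(ce^\ld_r+df^\ld_r,ce^\ld_r+df^\ld_r)=(-1)^{r-1}2cd$ of the middle vector when $m=2r-1$ is odd (equivalently, through your observation about the top-pair representative), which forces the restriction to $c=0$, $d=0$, or $m$ even.
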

\proof
It follows from a direct computation using \eqref{eq:basisVld}--\eqref{eq:beta}.
For part (a), the form $\bar{\beta}_\ld$ is associated to the matrix obtained from $M_\ld$ by deleting columns and rows corresponding to $e^\ld_i, f^\ld_i$ for $1\leq i \leq t$.
Therefore, a naive projection $P^\ld_\nu$ does the job when $t$ is even; while in the odd case one needs to multiply the new basis elements by $\sqrt{-1}$ to make the forms to be compatible.

For part (b), we note first that $B_f = \{ ce^\ld_2+df^\ld_2, \ldots, ce^\ld_m+df^\ld_m, f^\ld_1, \ldots, f^\ld_{m-1}\}$ is an ordered basis of $W^\perp/W$ when $c \neq 0$; 
while $B_e = \{  e^\ld_1, \ldots, e^\ld_{m-1}, ce^\ld_2+df^\ld_2, \ldots, ce^\ld_m+df^\ld_m\}$ is an ordered basis of $W^\perp/W$ when $d \neq 0$.
In either case, the matrices associated to $\bar{\beta}_\ld$ with respect to $B_f, B_e$, respectively, are
\eq
\small
\begin{blockarray}{ *{8}{c} }
 & \{ce^\ld_{i+1}+df^\ld_{i+1}\} & \{f^\ld_i\} \\
\begin{block}{ c @{\quad} ( @{\,} *{7}{c} @{\,} )}
\{ce^\ld_{i+1}+df^\ld_{i+1}\}& A_{m-1} & -cJ_{m-1} 
\\
\{f^\ld_i\}& -cJ^t_{m-1} & 0
\\
\end{block}
\end{blockarray}
\quad
\quad
\begin{blockarray}{ *{8}{c} }
 & \{e^\ld_i\} & \{ce^\ld_{i+1}+df^\ld_{i+1}\} \\
\begin{block}{ c @{\quad} ( @{\,} *{7}{c} @{\,} )}
\{e^\ld_i\}& 0 & dJ_{m-1} 
\\
\{ce^\ld_{i+1}+df^\ld_{i+1}\}& dJ^t_{m-1} & A_{m-1}
\\
\end{block}
\end{blockarray}~~~ 
\normalsize
\endeq
Here $A_{m-1}$ is the zero matrix if $m$ is even; when $m=2r-1$ is odd, there only possible nonzero entry is
\eq
(A_{m-1})_{r,r} = \beta_\ld(ce^\ld_r+df^\ld_r, ce^\ld_r+df^\ld_r) = (-1)^{r-1}2cd.
\endeq
In other words, when $m$ is odd, $Q$ is an isomorphism of formed spaces if either $c=0$ or $d=0$.

For part (c), if $n-k-2 > k$, then the matrix of $\bar{\beta}_\ld$ is obtained from $M_\ld$ by deleting the rows and columns corresponding to $e_1^\ld$ and $e_m^\ld$, and hence one only needs to deal with the sign change of the upper left block. 
For the other case $n-k-2=k$, note first that $k$ must be odd so that \eqref{eq:partitionD} is satisfied. Next, we consider the ordered basis $B= \{e^\ld_2+f^\ld_1, \ldots, e^\ld_{k+1}+f^\ld_k, e^\ld_2-f^\ld_1, \ldots, e^\ld_{k+1}-f^\ld_k\}$. The matrix associated to $\bar{\beta}_\ld$ under $B$ is then
\eq
\small
\begin{blockarray}{ *{8}{c} }
 & \{e^\ld_{i+1}+f^\ld_i\} & \{e^\ld_{i+1}-f^\ld_i\} \\
\begin{block}{ c @{\quad} ( @{\,} *{7}{c} @{\,} )}
\{e^\ld_{i+1}+f^\ld_i\}& 0 & -2J_{k} 
\\
\{e^\ld_{i+1}-f^\ld_i\}& -2J^t_{k} &0
\\
\end{block}
\end{blockarray}~~~ 
\normalsize
\endeq 
We are done after a renormalization.
\endproof
For any $\mu = (\mu_1, \mu_2)$, let $x_\mu$ be the nilpotent operator given by
\eq
e^\mu_{\mu_1} \mapsto e^\mu_{\mu_1-1} \mapsto \ldots \mapsto e^\mu_1 \mapsto 0,
\quad
f^\mu_{\mu_2} \mapsto \ldots \mapsto f^\mu_1 \mapsto 0.
\endeq
Let $W$ be any formed subspace as in Lemma~\ref{lem:Q}. 
Denote by
$\bar{x}_\ld$ be the induced nilpotent operator determined by
\eq\label{eq:barx}
\bar{e}^\ld_{i} \mapsto \bar{e}^\ld_{i-1},
\quad  
\bar{f}^\ld_{i} \mapsto \bar{f}^\ld_{i-1},
\endeq
For each such $W$, define an integer 
\eq
\ell = \begin{cases}
2t &\tif W = \<e_i, f_i ~|~ 1\leq i \leq t\>, 2t < m;
\\
1 &\textup{otherwise}.
\end{cases}
\endeq
We further denote by $\Omega = \Omega(W)$ the map
$\Omega: \BD(\beta_\ld) \to \BD(\beta_\nu), F_\bullet \mapsto F''_\bullet$, where
\eq\label{eq:Omega}
F''_i = Q(F_{\ell+i}/W),
\quad (1\leq i \leq m-\ell)
\endeq
while $F''_{m} , \ldots F''_{n-2\ell}$ are determined by the isotropy condition with respect to $\beta_\nu$.
\begin{cor}\label{cor:Q}
Retain the notations in Lemma~\ref{lem:Q}. Let $Q$ be one of the following formed space isomorphisms: $Q^{\textup{I}}, Q^{\II}_j (j=1,2), Q^{\III}_l (1\leq l \leq 4)$. Then
 $Q \bar{x}_\ld = x_\nu Q$.
Moreover, $\Omega(\cBD_{\bar{x}_\ld}) \subseteq \cBD_{x_\nu}$.
\end{cor}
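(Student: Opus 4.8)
The plan is to prove the two assertions separately: the intertwining identity $Q\bar{x}_\ld = x_\nu Q$ by a case check against the explicit formulas of Lemma~\ref{lem:Q}, and the flag inclusion by transporting both isotropy and $x$-stability along the form-preserving isomorphism $Q$. Throughout I use that $x_\ld \in \fso_{n}(\CC;\beta_\ld)$, i.e.\ $x_\ld$ is skew-adjoint for $\beta_\ld$. In each part of Lemma~\ref{lem:Q} one sees directly that $x_\ld W \subseteq W$ (the spanning vectors of $W$ are either killed by $x_\ld$ or shifted within $W$), and skew-adjointness then forces $x_\ld W^\perp \subseteq W^\perp$: for $v \in W^\perp$ and $w \in W$ we have $\beta_\ld(x_\ld v, w) = -\beta_\ld(v, x_\ld w) = 0$ since $x_\ld w \in W$. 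Hence $x_\ld$ descends to the operator $\bar{x}_\ld$ of \eqref{eq:barx} on $W^\perp/W$.

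For $Q\bar{x}_\ld = x_\nu Q$ I would verify the identity on the basis of $W^\perp/W$ distinguished in each of \eqref{eq:Qa}, \eqref{eq:III-1}--\eqref{eq:II-2} and \eqref{eq:III-3}--\eqref{eq:III-4}. In every case $Q$ carries a Jordan string of $\bar{x}_\ld$ (a chain $\bar{u}_j \mapsto \bar{u}_{j-1} \mapsto \cdots$) onto a Jordan string $e^\nu_\bullet$ or $f^\nu_\bullet$ of $x_\nu$, multiplying each member of the string by one and the same scalar ($1$, $\sqrt{-1}$, or $\sqrt{-1/2}$). Since $\bar{x}_\ld$ and $x_\nu$ each act as the downward shift along their strings, and multiplication by a fixed scalar commutes with a shift, the two composites agree on every basis vector, hence everywhere. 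The one case where the strings of $\bar{x}_\ld$ are not classes of single basis vectors is $Q^{\III}_3$ of \eqref{eq:III-3}, where the strings are $\overline{e^\ld_{i+1}\pm f^\ld_i}$; here one checks $\bar{x}_\ld(\overline{e^\ld_{i+1}\pm f^\ld_i}) = \overline{e^\ld_i \pm f^\ld_{i-1}}$ (with $\bar{e}^\ld_1 = 0$ since $e^\ld_1 \in W$), so these two families are again $\bar{x}_\ld$-stable strings sent by $Q$ to the $e^\nu_\bullet$- and $f^\nu_\bullet$-strings.

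For $\Omega(\cBD_{\bar{x}_\ld}) \subseteq \cBD_{x_\nu}$, let $F_\bullet$ be an isotropic, $x_\ld$-stable flag in the domain of $\Omega$, so $W \subseteq F_\ell$; since $\dim W = \ell = \dim F_\ell$ this gives $F_\ell = W$ and $F_{n-\ell} = F_\ell^\perp = W^\perp$. Put $\bar{F}_i = F_{\ell+i}/W \subseteq W^\perp/W$ for $0 \le i \le n-2\ell$, the full flag induced on $W^\perp/W$. I would first record that $\bar{F}_\bullet$ is itself a type~D isotropic flag for $\bar\beta_\ld$: from $\bar\beta_\ld(\bar{u},\bar{v})=\beta_\ld(u,v)$ and the isotropy $F_{\ell+i}^\perp = F_{n-\ell-i}$ of $F_\bullet$ (together with $F_{n-\ell-i}\subseteq F_{n-\ell}=W^\perp$) one computes $\bar{F}_i^{\perp} = (F_{\ell+i}^\perp \cap W^\perp)/W = F_{n-\ell-i}/W = \bar{F}_{n-2\ell-i}$. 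Similarly $\bar{x}_\ld \bar{F}_i = (x_\ld F_{\ell+i}+W)/W \subseteq F_{\ell+i-1}/W = \bar{F}_{i-1}$, so $\bar{F}_\bullet$ is $\bar{x}_\ld$-stable.

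Finally, because $Q$ is an isomorphism of formed spaces it carries the type~D isotropic flag $\bar{F}_\bullet$ onto a type~D isotropic flag $Q(\bar{F}_\bullet)$ of $V_\nu$; in particular $Q(\bar{F}_i) = (Q(\bar{F}_{n-2\ell-i}))^\perp$, so the upper pieces $Q(\bar{F}_i)$ coincide with the $\beta_\nu$-isotropic completion prescribed in \eqref{eq:Omega}, and therefore $F''_\bullet = Q(\bar{F}_\bullet)$. Since $Q\bar{x}_\ld = x_\nu Q$ transports the $\bar{x}_\ld$-stability of $\bar{F}_\bullet$ to $x_\nu$-stability of $F''_\bullet$, we conclude $\Omega(F_\bullet) = F''_\bullet \in \cBD_{x_\nu}$. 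The only genuinely delicate point is the uniform-scalar bookkeeping in the first assertion—especially the mixed strings of $Q^{\III}_3$ and the coefficients $\tfrac{d}{c}$, $\tfrac{c}{d}$ entering $Q^{\II}_1$, $Q^{\II}_2$—whereas the descent of isotropy and stability in the second assertion is forced once $Q$ is known to preserve the forms.
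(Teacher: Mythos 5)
Your proposal is correct and follows essentially the same route as the paper's (very terse) proof: the intertwining identity $Q\bar{x}_\ld = x_\nu Q$ is read off case by case from the explicit formulas of Lemma~\ref{lem:Q}, and the inclusion $\Omega(\cBD_{\bar{x}_\ld}) \subseteq \cBD_{x_\nu}$ comes from the single computation $x_\nu Q(F_{\ell+i}/W) = Q\bar{x}_\ld(F_{\ell+i}/W) \subseteq Q(F_{\ell+i-1}/W)$. The additional details you supply---skew-adjointness of $x_\ld$ giving $x_\ld W^\perp \subseteq W^\perp$ so that $\bar{x}_\ld$ of \eqref{eq:barx} is the genuine quotient operator, and the check that the isotropy-completed upper half of $F''_\bullet$ in \eqref{eq:Omega} coincides with $Q$ applied to the full induced flag---are correct refinements of steps the paper leaves implicit, not a different argument.
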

\proof
The former part  is a direct consequence of Lemma~\ref{lem:Q} thanks to the explicit construction provided in \eqref{eq:Qa}--\eqref{eq:III-4}. For the latter part, take any $F_\bullet \in \cB_{\bar{x}_\ld}$, we have
\[
 x_\nu Q(F_i/W)
=
Q \bar{x}_\ld (F_i/W)
\subseteq
Q (F_{i-1}/W).
\]
\endproof
\subsection{Case I}\label{sec:CaseI}
In this section we assume that
\eq\label{eq:caseI}
n =2m\geq 6,
\quad
\ld = (n-k,k),
\quad
\da\in \BD_{\ld},
\quad
1 = \sigma(2t) \in V^{\da}_l,
\quad
2t < m.
\endeq
In words, we consider the marked cup diagram $\da$ on $m \geq 3$ vertices with $\lfloor\frac{k}{2}\rfloor$ cups such that vertex 1 and $2t<m$ are connected by an unmarked cup, as below:
 \eq
\da= \icupd
 \quad
 (2t<m). 
\endeq
We are going to show that $K^{\da}$ is homeomorphic to the trivial bundle $K^{\dot{b}} \times K^{\dot{c}}$ for some $\db \in \BD_{\mu_1, \mu_2}, \dc \in \BD_{\nu_1, \nu_2}$ such that $\mu_2, \nu_2 < k$ so that the inductive hypothesis applies.
Note that if $2t=m$, the construction in this section will not work and has to be discussed in Section~\ref{sec:CaseII}
\begin{definition}\label{def:abc}
Assume that \eqref{eq:caseI} holds. We define two marked cup diagrams $\db \in \BD_\mu, \dc \in \BD_\nu$ as follows:
\begin{enumerate}
\item $\db$ is obtained by cropping $\da$ on the first $2t$ vertices. Note that by construction, $\db$ consists of only unmarked cups and hence $\mu =(2t,2t) \vdash 4t$.
\item $\dc$ is obtained by cropping $\da$ on the last $m-2t$ vertices with a shift on the indices.
Note that $\nu =(n-2t-k,k-2t) \vdash n-4t$ by a simple bookkeeping on the number of vertices and cups.
\end{enumerate}
Namely, we have
\[
\da= \icupd
\quad
\Rightarrow
\db =
\begin{tikzpicture}[baseline={(0,-.3)}, scale = 0.8]
\draw (2.75,0) -- (0.75,0) -- (0.75,-.9) -- (2.75,-.9);
\draw[dotted] (2.75,-.9) -- (2.75,0);\begin{footnotesize}
\node at (2.5,.2) {$2t$};
\node at (1,.2) {$1$};
\node at (1.75, -.3) {$a_1$};
\end{footnotesize}
\draw[thick] (1,0) .. controls +(0,-1) and +(0,-1) .. +(1.5,0);
\end{tikzpicture}
\quad
\textup{and}
\quad
\dc = 
\begin{tikzpicture}[baseline={(0,-.3)}, scale = 0.8]
\draw (2.25,0) -- (0.75,0) -- (0.75,-.9) -- (2.25,-.9);
\draw[dotted] (2.25,-.9) -- (2.25,0);\begin{footnotesize}
\node at (2.25,.2) {$m-2t$};
\node at (1,.2) {$1$};
\node at (1.5, -.3) {$a_2$};
\end{footnotesize}
\end{tikzpicture}
\]
\end{definition}

\lemma\label{lem:Kc}
Let $\da, \dc$ be defined as in Definition~\ref{def:abc}. 
Recall $\Omega$ from \eqref{eq:Omega} using $Q= Q^\textup{I}$ from \eqref{eq:Qa}.
Then $\Omega(K^{\da}) \subseteq K^{\dc}$.
\endlemma
\proof
We first the the unmarked cup relations hold in $\Omega(K^{\da})$.
In other words, for any $F_\bullet \in K^{\da}$ and any pairs of vertices $(i,j)$ connected by an unmarked cup in $\dc$,
\eq\label{eq:newcup}
Q(F_{\ell+j}/W) = x_\nu^{\frac{-1}{2}(j-i+1)} Q(F_{\ell+i-1}/W),
\quad
\textup{for all}
\quad 
i \in V^{\dc}_l.
\endeq
Note that $(i-\ell, j-\ell)$ must be connected by an unmarked cup, and hence
\eq\label{eq:oldcup}
F_{j-\ell} = x_\ld^{\frac{-1}{2}(j-i+1)} F_{i-\ell},
\quad
\textup{for all}
\quad 
i \in V^{\dc}_l.
\endeq
Thus, \eqref{eq:newcup} follows from combining \eqref{eq:oldcup} and the former part of Corollary~\ref{cor:Q}. 

A ray connected to vertex $i$ in $\dc$ corresponds to the five flag conditions as in Theorem~\ref{thm:main2a}(iii)--(iv) and Theorem~\ref{thm:main2b}(iii)--(v).
Since $\db$ contains exactly $t$ unmarked cups, $F_{2t} = \<e_1, \ldots, e_t, f_1, \ldots, f_t\>$ and so 
$F''_i = Q(F_{\ell+i}/F_{2t})$. A case by case analysis shows that the new ray relations hold.
\endproof
\lemma\label{lem:Kb}
Let $\da, \db$ be defined as in Definition~\ref{def:abc}.
Then the map below is well-defined: 
\eq
\pi_{a,b}: K^{\da} \to K^{\db},
\quad
F_\bullet \mapsto F'_\bullet,
\endeq
where $F'_i = P^\ld_{\mu}(F_i)$ if $1 \leq i \leq 2t$, and that $F'_{2t+1} , \ldots F'_{4t-1}$ are uniquely determined by $F'_1, \ldots, F'_{2t}$ under the isotropy condition with respect to $\beta_\mu$. 
\endlemma
\proof.
We split the proof into three steps: firstly we show that $F'_\bullet$ is isotropic under $\beta_\mu$. Secondly, we show that $F'_\bullet$ sits inside the Springer fiber $\cBD_{x_\mu}$. 
Finally, we show that $F'_\bullet$ lies in the irreducible component $K^{\db}$.

\begin{enumerate}[~]
\item Step 1: 
It suffices to show that $F'_{2t}$ is isotropic. Since the first $2t$ vertices in $\da$ are all connected by unmarked cups, $F_{2t} = \<e^\ld_1, \ldots, e^\ld_t, f^\ld_1, \ldots, f^\ld_t\>$. Hence,
\eq
F'_{2t} = \<e^\mu_1, \ldots, e^\mu_t, f^\mu_1, \ldots, f^\mu_t\>.
\endeq
It then follows directly from \eqref{eq:beta} that $(F'_{2t})^\perp = F'_{2t}$.
\item Step 2: 
It suffices to check that $x_\mu F'_i \subseteq F'_{i-1}$ for all $i$. 
Note that a direct computation shows that
\eq\label{eq:Pxmu}
P_\mu^\ld x_\ld = x_\mu P_\mu^\ld.
\endeq
Hence, when $i\leq 2t$, we have
\eq
x_\mu F'_i  = x_\mu P^\ld_\mu(F_i) = P_\mu^\ld x_\ld (F_i) 
\subseteq 
P_\mu^\ld(F_{i-1}) = F'_{i-1}.
\endeq
Next, since now $x_\mu F'_{i+1} \subseteq F'_i$ for $i<2t$ , we have
$F'_{i+1} \subseteq x_\mu\inv F'_i$, and hence
\eq
F'_{4t-i-1}= (F'_{i+1})^\perp \supseteq (x_\mu\inv F'_i)^\perp = x_\mu F'_{4t-i}.
\endeq
\item Step 3: we need to verify the conditions
\eq\label{eq:newcup}
P^\ld_\mu(F_{\sigma(i)}) = x_\mu^{\frac{-1}{2}(\sigma(i)-i+1)} (P^\ld_\mu(F_{i-1})),
\quad
\textup{for all}
\quad 
i \in V^{\db}_l.
\endeq
Note that $V^{\db}_l \subseteq V^{\da}_l$, hence the unmarked cup relations in $\da$ hold, i.e.,
\eq\label{eq:oldcup}
F_{\sigma(i)} = x_\ld^{\frac{-1}{2}(\sigma(i)-i+1)} F_{i-1},
\quad
\textup{for all}
\quad 
i \in V^{\db}_l.
\endeq
Thus, \eqref{eq:newcup} follows from applying $P^\ld_\mu$ to \eqref{eq:oldcup},
thanks to \eqref{eq:Pxmu}. 
\end{enumerate}
\endproof
Finally, we are in a position to show that $K^{\da}$ is irreducible.
\begin{prop}\label{prop:caseI}
Retain the notations of Lemma~\ref{lem:Kc}--\ref{lem:Kb}. The assignment
$F_\bullet \mapsto (\pi_{a,b}(F_\bullet), \Omega(F_\bullet))$ defines a homeomorphism between $K^{\da}$ and the trivial fiber bundle $K^{\db} \times K^{\dc}$.
\end{prop}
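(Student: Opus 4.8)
The plan is to prove the proposition by exhibiting an explicit continuous two-sided inverse to the map $\Psi := (\pi_{a,b},\Omega)$; the homeomorphism then follows immediately, since $\Psi$ is itself continuous and well-defined into $K^{\db}\times K^{\dc}$ by Lemmas~\ref{lem:Kb} and~\ref{lem:Kc}. The guiding observation is that, because vertex $1$ is joined to $2t<m$ by the \emph{outermost} unmarked cup, the diagram $\da$ splits cleanly into two non-interacting blocks: every cup of the nested subdiagram $a_1$ has both endpoints in $\{1,\dots,2t\}$, while every cup or ray of $a_2$ lives strictly outside, and no $\da$-relation straddles the two. Correspondingly a flag $F_\bullet\in K^{\da}$ should be completely recoverable from its lower piece $F_1\subset\cdots\subset F_{2t}=W$ (recorded faithfully by $\pi_{a,b}$ via the isomorphism $P^\ld_\mu|_W$) together with the induced flag on the reduced formed space $W^\perp/W$ (recorded by $\Omega$ via $Q=Q^{\textup{I}}$), all remaining steps being forced by isotropy.

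Concretely, I would define the candidate inverse $\Xi\colon K^{\db}\times K^{\dc}\to\cBD_{x_\ld}$ by $\Xi(G_\bullet,H_\bullet)=F_\bullet$, where
\[
F_i = \begin{cases}
P^\mu_\ld(G_i) & \text{if } 0 \le i \le 2t,\\
\psi\inv\!\big(Q\inv(H_{i-2t})\big) & \text{if } 2t < i \le n-2t,\\
F_{n-i}^\perp & \text{if } n-2t < i \le n,
\end{cases}
\]
with $\psi\colon W^\perp\to W^\perp/W$ the quotient of~\eqref{eq:psi}. Here one first notes that the $t$ unmarked cups of $\db$ force $G_{2t}=\ker x_\mu^{t}=\langle e^\mu_1,\dots,e^\mu_t,f^\mu_1,\dots,f^\mu_t\rangle$, so that $P^\mu_\ld$ carries $G_i$ ($i\le 2t$) isomorphically into $W=\ker x_\ld^{t}$; a dimension count gives $\dim F_i=i$ throughout, and the two prescriptions agree at the seams since $F_{2t}=W$ and $F_{n-2t}=W^\perp=F_{2t}^\perp$.

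The substantive work, and the \textbf{main obstacle}, is to check that $\Xi$ actually lands in $K^{\da}$; I would verify the three defining properties in turn. For \emph{isotropy} I would invoke the standard reduction that isotropic flags of $V_\ld$ sandwiched between $W$ and $W^\perp$ correspond bijectively, via $\psi$ and the formed isomorphism $Q$, to isotropic flags of $W^\perp/W\cong V_\nu$: since $H_\bullet$ is isotropic for $\beta_\nu$ and $Q$ intertwines $\bar{\beta}_\ld$ with $\beta_\nu$ (Lemma~\ref{lem:Q}(a)), the middle steps are isotropic; the lower steps are isotropic because $P^\mu_\ld$ sends $G_{2t}$ to the isotropic $W$ by~\eqref{eq:beta}; and the upper steps are isotropic by the very perp prescription. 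For the \emph{Springer condition} $x_\ld F_i\subseteq F_{i-1}$, the range $i\le 2t$ uses $P^\mu_\ld x_\mu=x_\ld P^\mu_\ld$ (the analogue of~\eqref{eq:Pxmu}) and the condition on $G_\bullet$; the range $2t<i\le n-2t$ uses $Q\bar{x}_\ld=x_\nu Q$ (Corollary~\ref{cor:Q}) to transport the Springer condition on $H_\bullet$ back to $W^\perp/W$, the transition $i=2t+1$ being exactly $\bar{x}_\ld(F_{2t+1}/W)\subseteq Q\inv(H_0)=0$, i.e.\ $x_\ld F_{2t+1}\subseteq W=F_{2t}$; and the upper range follows by the perp duality already used in Step~2 of Lemma~\ref{lem:Kb}. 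Finally, the \emph{$\da$-relations} decompose as anticipated: the big-cup relation reads $F_{2t}=x_\ld^{-t}F_0=\ker x_\ld^t=W$ and holds by construction; the $a_1$-cup relations are the $P^\mu_\ld$-images of the $\db$-relations on $G_\bullet$ (reversing Step~3 of Lemma~\ref{lem:Kb}); and the $a_2$-relations are recovered from the $\dc$-relations on $H_\bullet$ through $Q\inv$, the unmarked-cup relations via $Q\bar{x}_\ld=x_\nu Q$ as in Lemma~\ref{lem:Kc} and the ray relations via the explicit bases of Theorems~\ref{thm:main2a} and~\ref{thm:main2b}, each an $Q\inv$-image of the corresponding basis for $H_\bullet$.

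It then remains to confirm $\Psi\circ\Xi=\mathrm{id}$ and $\Xi\circ\Psi=\mathrm{id}$, which is immediate: for $i\le 2t$ one uses $P^\ld_\mu P^\mu_\ld=\mathrm{id}$ on $G_{2t}$ and $P^\mu_\ld P^\ld_\mu=\mathrm{id}$ on $W$; for $2t<i\le n-2t$ one uses $QQ\inv=\mathrm{id}$ and $\psi\psi\inv=\mathrm{id}$ modulo $W$; and the remaining steps coincide because they are forced by isotropy on both sides. Continuity of $\Xi$ is clear, as it is built from the linear maps $P^\mu_\ld$ and $Q\inv$, the quotient-and-lift by the fixed space $W$, and the orthogonal-complement operation, all depending continuously on the flag data. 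Together with the continuity of $\Psi$, this shows $\Psi$ is a homeomorphism of $K^{\da}$ onto the trivial bundle $K^{\db}\times K^{\dc}$, completing the proof.
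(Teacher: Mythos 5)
Your proposal is correct and follows essentially the same route as the paper: the paper likewise proves the proposition by writing down the explicit inverse $(F'_\bullet,F''_\bullet)\mapsto F_\bullet$ with $F_i=P^\mu_\ld(F'_i)$ for $i\leq 2t$, $F_i=\psi\inv(Q\inv(F''_{i-2t}))$ in the middle range, and $F_i=F_{n-i}^\perp$ at the top, and then notes that membership in $K^{\da}$ follows from the construction once one checks the seam containment $F_{2t}\subset F_{2t+1}$ (automatic since $\psi\inv$ of anything contains $W$). Your verification that the inverse lands in $K^{\da}$ is simply more detailed than the paper's terse treatment; the substance is identical.
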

\proof
It suffices to check that the given assignment is a homeomorphism with its inverse map given by
$(F'_\bullet, F''_\bullet) \mapsto F_\bullet$, where
\eq
F_i = 
\begin{cases}
P^\mu_\ld(F'_i) &\tif 1\leq i \leq 2t;
\\
\psi\inv(Q\inv(F''_{i-2t})) &\tif 2t+1 \leq i \leq m;
\\
F_{n-i}^\perp &\tif m+1 \leq i \leq n.
\end{cases}
\endeq
That $F_\bullet \in K^{\da}$ almost follows from construction, except for that we need to check if $F_{2t} \subset F_{2t+1}$, which follows from that $\psi\inv (Q\inv(F''_1))$ is a $2t+1$-dimensional space that contains $F_{2t}$.
\endproof
\subsection{Case II}\label{sec:CaseII}
In this section we consider  one of the following diagrams:
\[
\begin{tikzpicture}[baseline={(0,-.3)}, scale = 0.8]
\draw (2.75,0) -- (0.75,0) -- (0.75,-.9) -- (2.75,-.9);
\draw[dotted] (2.75,-.9) -- (2.75,0);\begin{footnotesize}
\node at (2.5,.2) {$m=2t$};
\node at (1,.2) {$1$};
\node at (1.75, -.3) {$a_1$};
\end{footnotesize}
\draw[thick] (1,0) .. controls +(0,-1) and +(0,-1) .. +(1.5,0);
\end{tikzpicture}
\quad
\textup{or}
\quad
\icupc 
\quad
(2t \leq m)
\]
Note that for the latter subcase the marked cup $(1,2t)$ has to be accessible from the axis of reflection, and so there is no rays in the subdiagram $a_2$. Moreover, there is no rays in the entire diagram $\da$, and thus $\ld= (m,m)$.
That is, in this section we are working with the following assumptions:
\eq\label{eq:caseII}
n =2m\geq 6,
\quad
\ld = (m,m),
\quad
\da\in \BD_{\ld},
\quad
\begin{array}{l}
\textup{either }
1 = \sigma(m) \in V^{\da}_l
\\
\textup{or }
1 = \sigma(2t) \in X^{\da}_l
\quad
(2t \leq m).
\end{array}
\endeq
We are going to show that $K^{\da}$ is a $\CP^1$-bundle with typical fiber $K^{\dc}$ for some $\dc \in \BD_{\nu_1, \nu_2}$ such that $\nu_2 < k$ so that the inductive hypothesis applies.
\begin{definition}\label{def:ac}
Assume that \eqref{eq:caseII} holds. Define $\dc \in \BD_\nu$ by performing the following actions:
\begin{enumerate} 
\item Remove the cup connected to vertex 1 together with vertex 1. 
\item Connect a marked ray to vertex $\sigma(1)$.
\item Decrease all indices by one.
\end{enumerate}
Note that $\nu =(m-1,m-1)$ by a simple bookkeeping on the number of vertices and cups.
Namely, we have
\[
\da=
\begin{tikzpicture}[baseline={(0,-.3)}, scale = 0.8]
\draw (2.75,0) -- (0.75,0) -- (0.75,-.9) -- (2.75,-.9);
\draw[dotted] (2.75,-.9) -- (2.75,0);\begin{footnotesize}
\node at (2.5,.2) {$m$};
\node at (1,.2) {$1$};
\node at (1.75, -.3) {$a_1$};
\end{footnotesize}
\draw[thick] (1,0) .. controls +(0,-1) and +(0,-1) .. +(1.5,0);
\end{tikzpicture}
\Rightarrow
\dc=
\begin{tikzpicture}[baseline={(0,-.3)}, scale = 0.8]
\draw (2.75,0) -- (1.25,0) -- (1.25,-.9) -- (2.75,-.9);
\draw[dotted] (2.75,-.9) -- (2.75,0);\begin{footnotesize}
\node at (2.5,.2) {$m-1$};
\node at (2.5, -.65) {$\blacksquare$};
\node at (1.75, -.3) {$a_1$};
\end{footnotesize}
\draw[thick] (2.5,0) -- (2.5,-.9); 
\end{tikzpicture}
\quad
\textup{or}
\quad
\da=
\icupc 
\Rightarrow
\dc=
\begin{tikzpicture}[baseline={(0,-.3)}, scale = 0.8]
\draw (3.25,0) -- (1.25,0) -- (1.25,-.9) -- (3.25,-.9);
\draw[dotted] (3.25,-.9) -- (3.25,0);\begin{footnotesize}
\node at (2.5,.2) {$2t-1$};
\node at (2.5, -.65) {$\blacksquare$};
\node at (1.75, -.3) {$a_1$};
\node at (2.8, -.3) {$a_2$};
\end{footnotesize}
\draw[thick] (2.5,0) -- (2.5,-.9); 
\end{tikzpicture}
\]
\end{definition}
Define  $\pi:K^{\da}\to \CP^1$ by
\eq
F_\bullet = ( 0 \subset F_1=\<\ld e_1+\mu f_1\> \subset \ldots \subset\CC^n) \mapsto [\ld:\mu].
\endeq
Now we check $(K^{\da}, \CP^1, \pi, K^{\dc})$ is a fiber bundle. For the local triviality condition we use the open covering $\CP^1 = U_1 \cup U_2$ where
\eq
U_1 = \{[1:\gamma] ~|~\gamma \in \CC\},
\quad
U_2 = \{[\gamma:1] ~|~\gamma \in \CC\}.
\endeq
\lemma\label{lem:KcII}
Let $\da, \dc$ be defined as in Definition~\ref{def:ac}. 
Recall $\Omega$ from \eqref{eq:Omega} using $Q = Q^{\II}_j$, $j=1,2$ from Lemma~\ref{lem:Q}(b). 
Then the maps below are well-defind: 
\eq
\phi_j:\pi\inv(U_j) \to K^{\dc},
\quad
F_\bullet \mapsto \Omega(F_\bullet). 
\endeq
\endlemma
\proof
The cup relations in subdiagrams $a_1$ and $a_2$ can be verified similarly as in Lemma~\ref{lem:Kc}.
For the marked ray connected to vertex $2t-1$ in $\dc$, the corresponding relation, according to Theorem~\ref{thm:main2a}(iii) and Theorem~\ref{thm:main2b}(iv), is
\eq\label{eq:newray}
F''_{2t-1} = 
\begin{cases}
\<e^\nu_1, \ldots, e^\nu_{t-1}, f^\nu_1, \ldots, f^\nu_t\>;
&\tif 2t=m;
\\
\<e^\nu_1, \ldots, e^\nu_{t-1}, f^\nu_1, \ldots, f^\nu_{t-1}, e^\nu_t+f^\nu_t\>
&\tif 2t<m.
\end{cases}
\endeq  
We may assume now $j=1$ since the other case can be proved by symmetry.
Take $F_\bullet \in \pi\inv(U_1)$ so that $F_1 = \<e_1^\ld + \gamma f^\ld_1\>$. The relation for the cup connecting $1$ and $2t$ is
\eq\label{eq:oldcupII}
\begin{cases}
F_{2t} =\<e^\ld_1, \ldots e^\ld_t, f^\ld_1, \ldots, f^\ld_t\>
&\tif 1 = \sigma(2t) \in V^{\da}_l, 2t=m;
\\
x_\ld^t F_{2t} = F_1, 
&\tif 1 = \sigma(2t) \in X^{\da}_l, 2t=m;
\\
x_\ld^t F_{2t} = F_1, 
\quad 
x^{m-2t} F_{2t}^\perp = F_{2t}
&\tif 1 = \sigma(2t) \in X^{\da}_l, 2t < m.
\end{cases} 
\endeq
A direct application of Lemma~\ref{lem:Q}(b) shows that  \eqref{eq:oldcupII} leads to \eqref{eq:newray} in either case.
\endproof
\begin{prop}\label{prop:caseII}
Let $\da, \dc$ be defined as in Definition~\ref{def:ac}. 
In both cases, the subvariety $K^{\da}$ is a $\CP^1$-bundle with typical fiber $K^{\dc}$.
\end{prop}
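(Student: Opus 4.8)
The plan is to exhibit $\pi\colon K^{\da}\to\CP^1$ as locally trivial over the two charts $U_1,U_2$, in exactly the format of the base case Lemma~\ref{lem:n<=4} and of Case~I (Proposition~\ref{prop:caseI}). Over $U_j$ I would take as candidate trivialization the map
\[
\Psi_j=(\pi,\phi_j)\colon \pi\inv(U_j)\longrightarrow U_j\times K^{\dc},
\]
where $\phi_j=\Omega$ is the map from Lemma~\ref{lem:KcII}, already shown to carry $\pi\inv(U_j)$ into $K^{\dc}$. Since $\pi$ merely reads off the line $F_1=\<\ld e_1+\mu f_1\>$ and $\phi_j$ is assembled from the quotient $\psi$ of \eqref{eq:psi} and the formed-space isomorphism $Q^{\II}_j$ of Lemma~\ref{lem:Q}(b), the map $\Psi_j$ is continuous and commutes with the projection to $U_j$; the entire content is to produce a continuous two-sided inverse.

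First I would write the inverse down explicitly. Over $U_1$, given $([1:\gamma],F''_\bullet)\in U_1\times K^{\dc}$, set $W=\<e_1^\ld+\gamma f_1^\ld\>$ and define
\[
F_i=\begin{cases}
W & \tif i=1,\\
\psi\inv(Q\inv(F''_{i-1})) & \tif 2\le i\le m,\\
F_{n-i}^\perp & \tif m+1\le i\le n,
\end{cases}
\]
with the symmetric recipe over $U_2$ using $Q^{\II}_2$. By construction $\pi(F_\bullet)=[1:\gamma]$ and $\phi_1(F_\bullet)=F''_\bullet$, so this is a set-theoretic inverse of $\Psi_1$ as soon as I know the reconstructed $F_\bullet$ genuinely lies in $K^{\da}$.

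The bulk of the work, and the main obstacle, is precisely this membership $F_\bullet\in K^{\da}$. Isotropy of $F_\bullet$ with respect to $\beta_\ld$ and the stability $x_\ld F_i\subseteq F_{i-1}$ should follow from the intertwining $Q\,\bar{x}_\ld=x_\nu Q$ (the former part of Corollary~\ref{cor:Q}) together with the corresponding properties of $F''_\bullet\in\cBD_{x_\nu}$, running the argument of Steps~1--2 of Lemma~\ref{lem:Kb} in reverse. The genuinely new point is the relation at the cup joining vertex $1$ to vertex $2t$: I must verify the appropriate line of \eqref{eq:oldcupII}, namely $F_{2t}=\<e_1,\ldots,e_t,f_1,\ldots,f_t\>$ when $2t=m$ and the cup is unmarked, and $x_\ld^{t}F_{2t}=F_1$ (together with $x_\ld^{m-2t}F_{2t}^\perp=F_{2t}$ when $2t<m$) when the cup is marked. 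This is exactly the converse of the implication \eqref{eq:oldcupII}$\Rightarrow$\eqref{eq:newray} established in Lemma~\ref{lem:KcII}, so the computation there---tracing $F_1=W$ through $\psi\inv\circ Q\inv$ by means of Lemma~\ref{lem:Q}(b)---reverses to supply it. I also need the single containment $F_1\subset F_2$, which holds because $F_2=\psi\inv(Q\inv(F''_1))$ is a two-dimensional subspace of $W^\perp$ containing $W=F_1$, just as in the closing line of the proof of Proposition~\ref{prop:caseI}.

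Finally I would note that the inverse is again built solely from projections, quotient maps, and orthogonal complements, hence continuous, so each $\Psi_j$ is a homeomorphism compatible with projection to $U_j$. This exhibits $(K^{\da},\CP^1,\pi,K^{\dc})$ as a locally trivial fiber bundle with typical fiber $K^{\dc}$, covering both subcases of Definition~\ref{def:ac} simultaneously and completing the proof.
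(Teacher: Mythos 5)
Your proposal is correct and follows essentially the same route as the paper: the same cover $U_1,U_2$, the same trivializations $(\pi,\phi_j)$, and the same explicit inverse $F_1=\<ae_1^\ld+bf_1^\ld\>$, $F_i=\psi\inv((Q^{\II}_j)\inv(F''_{i-1}))$ for $2\le i\le m$, completed by orthogonal complements. The paper simply labels the verification that this inverse lands in $K^{\da}$ as routine, whereas you spell it out (reversing Lemmas~\ref{lem:Kb} and \ref{lem:KcII} and checking $F_1\subset F_2$), which is a faithful elaboration rather than a different argument.
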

\proof
For $j=1,2$, it suffices to check that the diagram below commutes:
\eq\label{eq:IIcomm}
\begin{tikzcd}
\pi\inv(U_j) \ar[r, "{(\pi,\phi_j)}","\simeq"'] \ar[d,"\pi_i"]
& U_j\times K^{\dc} \ar[ld, "\textup{proj}_1"]
\\
U_j
&
\end{tikzcd}
\endeq
It is routine to check that $(\pi,\phi_j)$ is a homeomorphism with its inverse map given by
$([a:b], F''_\bullet) \mapsto F_\bullet$, where
\eq
F_i = 
\begin{cases}
\<ae_1^\ld+bf_1^\ld\> &\tif i=1;
\\
\psi\inv((Q^{\II}_j)\inv(F''_{i-1})) &\tif 2 \leq i \leq m;
\\
F_{n-i}^\perp &\tif m+1 \leq i \leq n.
\end{cases}
\endeq
\endproof
\subsection{Case III}
In this section we assume that
\eq\label{eq:caseIII}
n =2m\geq 6,
\quad
\ld = (n-k,k),
\quad
\da\in \BD_{\ld},
\quad
1 \not\in V^{\da}_l \sqcup X^{\da}_l
\endeq
That is, vertex 1 is connected to a ray.
We will discuss the following subcases:
\begin{enumerate}[~]
\item Case III--1: $\da = \icupa$ in which $a_2$ contains no rays.
\item Case III--2: $\da = \icupb$ in which $a_2$ contains no rays.
\item Case III--3: $\da = \icupa$ in which $a_2$ contains exactly one ray.
\item Case III--4: $\da = \icupa$ in which $a_2$ contains at least two rays.
\end{enumerate}
Let $\dc \in \BD_{\nu}$ be the marked cup diagram obtained from $a_2$ by decreasing all indices by one. 
We are going to show that $K^{\da}$ is isomorphic to $K^{\dc}$ so that the inductive hypothesis applies since $\nu \vdash n-2 < n$.

For case III--$l$ ($1\leq l \leq 4$), by Lemma~\ref{lem:Q} we have a formed space isomorphism $Q^{\III}_l$. Below we summarize the data associated to each subcase:
\eq\label{eq:data}
\begin{array}{|c|c|c|c|c|}
\hline
\textup{Case} & \textup{III--}1& \textup{III--}2& \textup{III--}3& \textup{III--}4
\\
\hline
\ld & (m,m) &(m,m) & (m+1,m-1) & (n-k,k)
\\
W=F_1&\<e^\ld_1\> & \<f^\ld_1\> &\<e^\ld_1\>&\<e^\ld_1\>
\\
\nu & (m-1,m-1) & (m-1,m-1) & (m-1,m-1) & (n-k-2,k)
\\
Q^{\III}_l&\eqref{eq:III-1}&\eqref{eq:III-2}&\eqref{eq:III-3}&\eqref{eq:III-4}
\\
\hline
\end{array}
\endeq
\lemma\label{lem:KcIII}
Let $\da, \dc$ be defined as in Definition~\ref{def:ac}. 
Recall $\Omega$ from \eqref{eq:Omega} using $Q = Q^{\III}_l$, $1\leq l \leq 4$ from Lemma~\ref{lem:Q}(b)--(c). 
Then the maps below are well-defined: 
\eq
\phi_l:  K^{\da} \to K^{\dc},
\quad
F_\bullet \mapsto Q^{\III}_l(F_\bullet).
\endeq
\endlemma
\proof
The lemma can be proved using a routine case-by-case analysis in a similar way as Lemmas \ref{lem:Kc} and \ref{lem:KcII} but is easier.
\endproof
\begin{prop}\label{prop:caseIII}
Retain the notations of  Lemma~\ref{lem:KcIII}.
The map $\phi_l:  K^{\da} \to K^{\dc}$ is an isomorphism.
\end{prop}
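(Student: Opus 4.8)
The plan is to promote the well-defined morphism $\phi_l$ from Lemma~\ref{lem:KcIII} to an isomorphism by writing down an explicit inverse, following the template of Proposition~\ref{prop:caseII} but with the crucial simplification that in Case III the ray attached to vertex $1$ pins down $F_1 = W$ to the single one-dimensional space recorded in \eqref{eq:data} (namely $\<e^\ld_1\>$ or $\<f^\ld_1\>$). Thus there is no $\CP^1$ of choices for $F_1$, and $\phi_l$ should be a genuine bijection rather than a bundle projection. Concretely I would set
\eq
\phi_l\inv : K^{\dc} \to K^{\da},
\quad
F''_\bullet \mapsto F_\bullet,
\quad
F_i =
\begin{cases}
\hspace{6mm} W &\tif i = 1;
\\
\psi\inv\bigl((Q^{\III}_l)\inv(F''_{i-1})\bigr) &\tif 2 \leq i \leq m;
\\
\hspace{4mm} F_{n-i}^\perp &\tif m+1 \leq i \leq n,
\end{cases}
\endeq
where $\psi = \psi(W)$ is the quotient map of \eqref{eq:psi}; this is precisely the recipe inverting $\Omega$ from \eqref{eq:Omega} in the present situation $\ell = 1$.

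Then I would verify, in three steps paralleling the inverse checks in Lemmas~\ref{lem:Kc} and~\ref{lem:KcII}, that this assignment lands in $K^{\da}$. First, $F_\bullet$ is an isotropic flag: since $(Q^{\III}_l)\inv(F''_{i-1})$ has dimension $i-1$ in $W^\perp/W$ and $\psi\inv$ raises dimension by one, $\dim F_i = i$; the inclusion $F_1 = W \subseteq F_2$ is automatic because $W = \ker\psi \subseteq \psi\inv(\,\cdot\,)$; and isotropy descends from $V_\nu$ through the formed-space isomorphism $Q^{\III}_l$, with the top half isotropic by fiat. Second, $x_\ld F_i \subseteq F_{i-1}$: because $W \subseteq \ker x_\ld$ and $x_\ld$ is skew-adjoint for $\beta_\ld$, the operator $x_\ld$ preserves $W^\perp$ and descends to $\bar{x}_\ld$ on $W^\perp/W$, so the relation $Q^{\III}_l \bar{x}_\ld = x_\nu Q^{\III}_l$ of Corollary~\ref{cor:Q} transports $x_\nu F''_{i-1}\subseteq F''_{i-2}$ back to $x_\ld F_i\subseteq F_{i-1}$ for $i\leq m$, and the range $i>m$ is handled by the standard identity $x_\ld(U^\perp) = (x_\ld\inv U)^\perp$. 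Third, $F_\bullet$ satisfies the cup and ray relations indexing $K^{\da}$: the vertex-$1$ ray relation of Theorem~\ref{thm:main2a}(iii)--(iv) is exactly $F_1 = W$ by the choice in \eqref{eq:data}, while the relations carried by the subdiagram $a_2$ are the defining relations of $\dc$ after the index shift and follow from the formed-space computation of Lemma~\ref{lem:KcIII} read in reverse. That $\phi_l$ and $\phi_l\inv$ compose to the identity in both orders is then formal.

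The main obstacle I anticipate is not any single step but the uniform bookkeeping across the four subcases III-1 through III-4, whose partitions $\nu$ and isomorphisms $Q^{\III}_l$ all differ (see \eqref{eq:data}). The delicate case is III-3, where the passage from $(m+1,m-1)$ to $(m-1,m-1)$ via \eqref{eq:III-3} mixes the $e$- and $f$-vectors through the change of basis $e^\ld_{i+1}\pm f^\ld_i$; there one must confirm that the relation defining $F_1 = \<e^\ld_1\>$, its isotropy, and the induced $\bar{x}_\ld$-action all remain compatible with the renormalized target basis $e^\nu_i, f^\nu_i$. Bicontinuity is not a worry, since $\phi_l$ and its inverse are assembled from the fixed linear isomorphisms $Q^{\III}_l$, the quotient $\psi$, and the operation of taking perpendiculars, all of which are algebraic; hence $\phi_l$ is an isomorphism of varieties, and $K^{\da}$ inherits the structure of an iterated $\CP^1$-bundle from $K^{\dc}$, to which the inductive hypothesis on $\nu\vdash n-2$ applies.
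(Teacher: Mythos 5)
Your proposal is correct and matches the paper's own proof: the paper establishes the proposition by exhibiting precisely the same inverse map $F''_\bullet \mapsto F_\bullet$ with $F_1 = W$, $F_i = \psi\inv((Q^{\III}_l)\inv(F''_{i-1}))$ for $2 \leq i \leq m$, and $F_i = F_{n-i}^\perp$ for $i > m$, declaring the verification routine. Your three-step check (isotropy, the Springer condition via $Q^{\III}_l \bar{x}_\ld = x_\nu Q^{\III}_l$, and the cup/ray relations) simply fills in the details the paper leaves to the reader.
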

\proof
Recall the subspace $W$ from \eqref{eq:data} for each $l$.
It is routine to check that its inverse map is given by $F''_\bullet \mapsto F_\bullet$, where
\eq
F_i = 
\begin{cases}
W &\tif i=1;
\\
\psi\inv((Q^{\III}_l)\inv(F''_{i-1})) &\tif 2 \leq i \leq m;
\\
F_{n-i}^\perp &\tif m+1 \leq i \leq n.
\end{cases}
\endeq
\endproof
\proof[Proof of Theorem~\ref{thm:main3}]
By Lemma~\ref{lem:n<=4}, $K^{\da}$ is an iterated $\CP^1$-bundle of length $\ell$ for $n \leq 4$.
For $n = 2m \geq 6$, an exhaustive list is given in \eqref{eq:list}, and can be divided into three cases.
In either case, $K^{\da}$ is an iterated $\CP^1$-bundle of length $\ell$ thanks to Propositions~\ref{prop:caseI}, \ref{prop:caseII} and \ref{prop:caseIII}.
\endproof
\bibliography{litlist-geom} \label{references}
\bibliographystyle{amsalpha}

\end{document}